\documentclass[11pt]{amsart}
\usepackage{amssymb,bm,mathrsfs,xcolor}
\usepackage{mathdots}
\usepackage{tikz-cd}

\usepackage{hyperref}
\hypersetup{colorlinks=true}

\newcommand{\map}[1]{\xrightarrow{#1}}

\newcommand{\iso}{\cong}
\newcommand{\define}{\stackrel{\mathrm{def}}{=}}

\DeclareMathOperator{\Gal}{\mathrm{Gal}}
\DeclareMathOperator{\Hom}{\mathrm{Hom}}
\DeclareMathOperator{\Aut}{\mathrm{Aut}}
\DeclareMathOperator{\End}{\mathrm{End}}
\DeclareMathOperator{\Spec}{\mathrm{Spec}}
\newcommand{\Q}{\mathbb Q}
\newcommand{\Z}{\mathbb Z}

\newcommand{\C}{\mathbb C}

\newcommand{\co}{\mathcal O}

\newcommand{\action}{\bullet}

\newcommand{\LRZ}{\mathrm{atom}}

\DeclareMathOperator{\ord}{\mathrm{ord}}
\DeclareMathOperator{\Lie}{\mathrm{Lie}}

\DeclareMathOperator{\GL}{\mathrm{GL}}

\DeclareMathOperator{\Herm}{\mathrm{Herm}}

\DeclareMathOperator{\Vol}{\mathrm{Vol}}
\DeclareMathOperator{\Den}{\mathrm{Den}}

\begin{document}
\author{Benjamin Howard}
\title{Arithmetic Siegel-Weil  for the spherical Hecke algebra}
\date{}

\theoremstyle{plain}
\newtheorem{theorem}{Theorem}[subsection]
\newtheorem{proposition}[theorem]{Proposition}
\newtheorem{lemma}[theorem]{Lemma}
\newtheorem{corollary}[theorem]{Corollary}
\newtheorem{conjecture}[theorem]{Conjecture}

\newtheorem{bigtheorem}{Theorem}[section]

\theoremstyle{definition}
\newtheorem{definition}[theorem]{Definition}
\newtheorem{hypothesis}[theorem]{Hypothesis}

\theoremstyle{remark}
\newtheorem{remark}[theorem]{Remark}
\newtheorem{example}[theorem]{Example}
\newtheorem{question}[theorem]{Question}

\numberwithin{equation}{subsection}
\renewcommand{\thebigtheorem}{\Alph{bigtheorem}}

\begin{abstract}
We formulate a conjectural generalization of the arithmetic Siegel-Weil formula, relating intersection multiplicities of cycles  on unitary Rapoport-Zink spaces to central derivatives of local Whittaker functions.
The new aspect of the conjecture is that it incorporates the action of certain Hecke correspondences on the Rapoport-Zink space.
We verify the conjecture in a low-dimensional case.

A significant portion of the paper is devoted to  generalizing the classical theory of  representation densities for local Hermitian spaces, as this is needed for explicit computation of the relevant Whittaker functions. We  expect this generalization to be of independent interest.
\end{abstract}

\thanks{
This work was supported in part by NSF grant DMS-2101636.
}

\maketitle
%\setcounter{section}{-1}
%\tableofcontents

%%%%%%%%%%%%%%%%%%%%%%%%%%%

\section{Introduction}

%%%%%%%%%%%%%%%%%%%%%%%%%%%%

Inspired by the conjectural \emph{arithmetic fundamental lemma for the spherical Hecke algebra} of Li-Rapoport-Zhang \cite{LRZ},  we formulate a conjectural \emph{arithmetic Siegel-Weil formula for the spherical Hecke algebra} on unitary Rapoport-Zink formal schemes.
Our conjecture  generalizes the original arithmetic Siegel-Weil formula conjectured by Kudla-Rapoport \cite{KR1},  and proved by Li-Zhang \cite{LZ1}, by incorporating the action of certain Hecke correspondences constructed in \cite{LRZ}.

Using a suitably generalized notion of representation densities, we verify a nontrivial case of  our  conjecture by direct calculation.

%%%%%%%%%%%%%%%%%%%%%%%%%%%

\subsection{The arithmetic Siegel-Weil formula}
\label{ss:introASW}

%%%%%%%%%%%%%%%%%%%%%%%%%%%%

The (local)  \emph{arithmetic Siegel-Weil formula}, formulated by Kudla-Rapoport \cite{KR1} and proved by Li-Zhang \cite{LZ1},   is a relation between a local  arithmetic intersection number and the central derivative of a local Whittaker function.     
The precise statement is recalled below in Theorem \ref{thm:ASW}, but let us here at least sketch the idea.  

Let $F$ be a finite extension of $\Q_p$ with $p$ an odd prime, and let $F'/F$ be an unramified quadratic extension.
Fix  an integer $n\ge 2$.
Up to isometry,  there are exactly  two nondegenerate $F'$-Hermitian spaces of dimension $n$.   
We call  these $V$ and $\mathbb{V}$, and  distinguish between them using a simple criterion: $V$ admits an $\co_{F'}$-lattice that is self-dual with respect to the Hermitian form, while $\mathbb{V}$ does not.

On the arithmetic side, one has a unitary Rapoport-Zink space  $\mathcal{N}_n$  of signature $(1,n-1)$.  
It is  a formal scheme, formally smooth of relative dimension  $n-1$ over the completion of the ring of integers of the maximal unramified extension of $F'$.
Any nonzero vector $x\in \mathbb{V}$ determines a \emph{Kudla-Rapoport} divisor $\mathcal{Z}(x) \subset \mathcal{N}_n$.  More generally, for any $1 \le d \le n$ and any tuple $x=(x_1,\ldots, x_d) \in \mathbb{V}^d$ with linearly independent components, one can consider the scheme-theoretic intersection 
\[
\mathcal{Z}(x) = \mathcal{Z}(x_1) \cap \cdots \cap \mathcal{Z}(x_d) \subset \mathcal{N}_n.
\]
  This is typically not equidimensional, but one can form a suitable derived variant of the intersection to define an object, called   $ \mathcal{Z}^\mathbb{L}(x_1) \otimes^{\mathbb{L}}  \cdots \otimes^{\mathbb{L}}  \mathcal{Z}^\mathbb{L}(x_d) $,
  that effectively functions as a codimension $d$ cycle on $\mathcal{N}_n$.   For example, when $d=n$ it has  a well-defined \emph{degree} 
 \[
\chi \big( \mathcal{Z}^\mathbb{L}(x_1) \otimes^{\mathbb{L}}  \cdots \otimes^{\mathbb{L}}  \mathcal{Z}^\mathbb{L}(x_n)  \big) \in \Z .
 \]
 The precise definition involves taking the Euler-Poincar\'e characteristic of a complex of vector bundles on $\mathcal{N}_n$, which is why we use the notation $\chi$ instead of the more obvious $\deg$.

On the analytic side, to any  Schwartz function $\varphi \in S(V^n)$ and any nonsingular Hermitian matrix $T \in \Herm_n(F')$, 
one can associate a  local Whittaker function $W_T(s,\varphi)$.
This is a polynomial in $q^{-s}$, where $q$ is the residue cardinality of $F$.
One should regard these $W_T(s,\varphi)$ as  local analogues of  Fourier coefficients of Eisenstein series on a quasi-split unitary group of rank $n$.

Now  fix a self-dual lattice $\Xi \subset V$.  Fix also  an $F'$-basis $x_1,\ldots, x_n \in \mathbb{V}$, and  denote by $T  \in \Herm_n(F')$ the matrix of Hermitian pairings (the \emph{moment matrix}) of the $x_i$'s.  
For a $T$ that arises in this way,  the local Whittaker function $W_T(s,\varphi )$ vanishes at $s=0$ for \emph{every} choice of $\varphi \in S(V^n)$, and in particular vanishes for the characteristic function
\[
\mathbf{1}_\Xi \otimes \cdots  \otimes \mathbf{1}_\Xi \in S(V) \otimes \cdots \otimes S(V) \iso S(V^n) 
\]
 of $\Xi^n \subset V^n$.
The arithmetic Siegel-Weil formula of Kudla-Rapoport \cite{KR1} and Li-Zhang \cite{LZ1} is the equality
\[
\chi \big( \mathcal{Z}^\mathbb{L}(x_1) \otimes^{\mathbb{L}}  \cdots \otimes^{\mathbb{L}}  \mathcal{Z}^\mathbb{L}(x_n)  \big)  
= \frac{ W'_T(0, \mathbf{1}_\Xi \otimes \cdots  \otimes \mathbf{1}_\Xi  ) }{  c(n) \cdot   \log(q^2)    },
\]
where $q$ is the residue cardinality of $F$, and 
\[
c(n)=  \prod_{i=1}^n (   1- (-q)^{-i}  ).
\]

See the works of Li-Liu \cite{LL1, LL2} and Disegni-Liu \cite{DL} for applications of the arithmetic Siegel-Weil formula to the  Beilinson-Bloch conjecture for  unitary Shimura varieties.
Generalizations of the arithmetic Siegel-Weil formula in the presence of ramification (either in $F'/F$ or in the level structure) have been proved by He-Li-Shi-Yang \cite{HSY} and Cho-He-Zhang \cite{CHZ}.

%%%%%%%%%%%%%%%%%%%%%%%%%%%%

\subsection{Action of the spherical Hecke algebra}

%%%%%%%%%%%%%%%%%%%%%%%%%%%%

Inspired by work of Leslie \cite{Leslie}  generalizing  the Jacquet-Rallis fundamental lemma, Li-Rapoport-Zhang \cite{LRZ} formulated a conjectural extension of Zhang's arithmetic fundamental lemma \cite{Zhang1, Zhang2},
incorporating  the action of the spherical Hecke algebra of a quasi-split unitary group.  The purpose of this paper is to investigate a similar generalization of the arithmetic Siegel-Weil formula.

Keep the notation of the previous subsection, let $\mathrm{U}(V)$ be the unitary group (regarded as a locally profinite group) of the $n$-dimensional Hermitian space $V$, and denote by $K \subset \mathrm{U}(V)$ the stabilizer of the fixed self-dual lattice $\Xi \subset V$.  Let $\mathcal{H}(V)$ be the spherical Hecke algebra of $\Q$-valued $K$-bi-invariant compactly supported functions on $\mathrm{U}(V)$.    
There  is a natural action of $\mathcal{H}(V)$ on the space $S(V)^K$ of $K$-fixed Schwartz functions on $V$, and so it makes sense to form
\[
( f_1\mathbf{1}_\Xi) \otimes \cdots  \otimes ( f_n\mathbf{1}_\Xi  )    \in S(V) \otimes \cdots \otimes S(V) \iso S(V^n) 
\]
for any $f_1,\ldots, f_n \in \mathcal{H}(V)$.

The spherical Hecke algebra is isomorphic to a polynomial ring in $\lfloor n/2 \rfloor$ variables.  In fact, Li-Rapoport-Zhang \cite{LRZ} construct a preferred set of polynomial generators $f_t^\LRZ \in \mathcal{H}(V)$,  indexed by even integers $2 \le t \le n$.  
For each of these \emph{atomic Hecke functions}, Li-Rapoport-Zhang construct a geometric analogue: an \emph{atomic Hecke correspondence} $\mathbb{T}^\LRZ_t$ that (morally, but not quite literally) acts as an endomorphism of the group of cycles on $\mathcal{N}_n$.  
They conjecture that these atomic Hecke correspondences pairwise commute, so that the Hecke algebra $\mathcal{H}(V)$ acts on cycles via the map $f_t^\LRZ \mapsto \mathbb{T}^\LRZ_t$.
 This allows one to form, at least assuming the commutativity conjecture, a  codimension $n$ cycle 
\[
( f_1\mathcal{Z}^\mathbb{L}(x_1)  ) \otimes^{\mathbb{L}}  \cdots \otimes^{\mathbb{L}}  (  f_n \mathcal{Z}^\mathbb{L}(x_n) )
\]
on $\mathcal{N}_n$ for any basis $x_1,\ldots, x_n \in \mathbb{V}$ and any $f_1,\ldots, f_n \in \mathcal{H}(V)$.

We conjecture  that the arithmetic Siegel-Weil formula recalled  above is compatible with these Hecke actions, in the sense that 
\begin{equation}\label{intro conjecture}
\chi \big( f_1 \mathcal{Z}^\mathbb{L}(x_1) \otimes^{\mathbb{L}}  \cdots \otimes^{\mathbb{L}}  f_n\mathcal{Z}^\mathbb{L}(x_n)  \big)  
\stackrel{?}{=}  \frac{ W'_T(0, f_1\mathbf{1}_\Xi \otimes \cdots  \otimes f_n\mathbf{1}_\Xi  ) }{  c(n) \cdot   \log(q^2)    },
\end{equation}
where the $T \in \Herm_n(F')$ on the right is the matrix of Hermitian inner products of the $x_i$'s.
A more general conjecture, formulated without assuming the commutativity of the atomic Hecke correspondences, is stated in the text as Conjecture \ref{conj:sphericalASW}.

As evidence toward the conjectural equality \eqref{intro conjecture}, we prove the following special case.  
Note that when $n=2$ there is only one atomic Hecke correspondence $\mathbb{T}^\LRZ_2$, so the commutativity conjecture of Li-Rapoport-Zhang is vacuously true.

\begin{bigtheorem}\label{Thm A}
Suppose $n=2$.  Let  $x_1,x_2\in \mathbb{V}$ be any $F'$-basis for which the Hermitian norms of $x_1$ and $x_2$ lie in $\co_F^\times$. The equality
\[
\chi \big( f_1 \mathcal{Z}^\mathbb{L}(x_1) \otimes^{\mathbb{L}}   f_2\mathcal{Z}^\mathbb{L}(x_2)  \big)  
=  \frac{ W'_T(0, f_1\mathbf{1}_\Xi \otimes  f_2\mathbf{1}_\Xi  ) }{  c(2) \cdot   \log(q^2)    }
\]
holds for all $f_1,f_2 \in \mathcal{H}(V)$,  where  $T \in \Herm_2(F')$  denotes the matrix of Hermitian pairings of $x_1$ and $x_2$.
\end{bigtheorem}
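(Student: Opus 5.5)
For $n=2$ the Hecke algebra $\mathcal{H}(V)$ is the polynomial ring $\Q[f_2^\LRZ]$. Both sides of the identity are $\Q$-bilinear in $(f_1,f_2)$. So it suffices to prove the equality for $f_1=(f_2^\LRZ)^a$ and $f_2=(f_2^\LRZ)^b$ with $a,b\ge 0$.

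\textbf{Step 1 (analytic side).} I would first compute the Hecke action on $K$-fixed Schwartz functions. For $n=2$, $S(V)^K$ is spanned by characteristic functions of the shells
\[
\{v\in\varpi^k\Xi : v\notin\varpi^{k+1}\Xi,\ \langle v,v\rangle \in \varpi^{2k+j}\co_F^\times\}.
\]
The operator $f_2^\LRZ$ acts on $\mathbf{1}_\Xi$ as a sum over lattices $\Xi'$ at "distance one'' from $\Xi$. I would write $(f_2^\LRZ)^a\mathbf{1}_\Xi$ explicitly as a linear combination of $\mathbf{1}_{L}$ for lattices $L\subset V$; these are vertex lattices or scalings of $\Xi$. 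The Whittaker function $W_T(s,\mathbf{1}_{L_1}\otimes\mathbf{1}_{L_2})$ is, up to normalization, a generalized representation density $\Den(X,L_1\otimes L_2;T)$, where the two columns of $T$ are represented by vectors in different lattices. This is the generalized representation density theory developed in the paper. Because the norms of $x_1,x_2$ are units, I would write $T=\mathrm{diag}(u_1,u_2)$ plus off-diagonal entries, reduce via $\GL_2(\co_{F'})$ equivalence to $T\sim\mathrm{diag}(u,u'\varpi^{m})$ with $m$ odd, and then evaluate the densities by a counting recursion (Cho–Yamauchi style) in the lattice $L_1$ for the first vector and $L_2$ for the second. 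Differentiating at $s=0$ gives an explicit formula in $m$, $a$, $b$.

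\textbf{Step 2 (geometric side).} $\mathcal{N}_2$ is the Lubin–Tate-type formal scheme (essentially a deformation space of a formal $\co_F$-module of height 2). When $\langle x,x\rangle\in\co_F^\times$, the divisor $\mathcal{Z}(x)$ is a quasi-canonical-lifting divisor, isomorphic to $\Spf W$. The correspondence $\mathbb{T}_2^\LRZ$ is built from isogenies of degree $q^2$ / the vertex-lattice Hecke correspondence. Applied to $\mathcal{Z}(x)$, it should produce a combination of divisors $\mathcal{Z}(x)$ and the quasi-canonical divisors $\mathcal{Z}_s(x)$ of level $s$, i.e. components of $\mathcal{Z}(\varpi^{s}x)$ minus $\mathcal{Z}(\varpi^{s-1}x)$. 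I would prove, by the moduli description, that $\mathbb{T}_2^\LRZ$ acts on the span of $\{\mathcal{Z}(\varpi^k x)\}$ and on $\{\mathbf{1}_{\varpi^k\Xi}\cdot\text{shells}\}$ through the \emph{same} matrix. Then $\chi(f_1\mathcal{Z}(x_1)\otimes^{\mathbb L} f_2\mathcal{Z}(x_2))$ becomes a finite sum of intersection numbers of two quasi-canonical divisors. Gross–Keating type formulas give these numbers: for divisors of levels $s,t$ and relative position $m$, the number is an explicit $\min$-expression in $q$-powers.

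\textbf{Step 3 (comparison).} Both sides are now finite sums indexed by the same data. I would match them either by the Kudla–Rapoport formula (Theorem \ref{thm:ASW}) applied to the reduced terms $\chi(\mathcal{Z}(\varpi^{k}x_1)\cdot\mathcal{Z}(\varpi^{l}x_2))=W'_{T'}(0,\mathbf{1}_\Xi^{\otimes2})/(c(2)\log q^2)$ with $T'=\mathrm{diag}(\varpi^{k},\varpi^{l})T\,\mathrm{diag}(\varpi^{k},\varpi^{l})$, or by direct summation. The first route is preferable. If Step 2 expresses $f\mathcal{Z}(x)$ as $\sum_k c_k\mathcal{Z}(\varpi^kx)$, and Step 1 shows $f\mathbf{1}_\Xi$ agrees with $\sum_k c_k\mathbf{1}_{\varpi^{-k}\Xi}$ up to terms whose Whittaker derivative is computable, then linearity reduces everything to the known ASW. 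The main obstacle is that $f\mathbf{1}_\Xi$ is \emph{not} a combination of $\mathbf{1}_{\varpi^k\Xi}$ alone: vertex lattices of type 2 appear. Their Whittaker derivatives must be matched with the non-Kudla–Rapoport pieces, namely quasi-canonical divisors that are not of the form $\mathcal{Z}(\varpi^kx)$. Handling this mismatch via explicit generalized density formulas is where I expect the real work, and I would attack it by directly computing $\Den$ for pairs of unequal lattices at diagonal $T$.
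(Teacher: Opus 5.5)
\textbf{Verdict.} Your geometric half is sound, but the analytic half has a genuine gap: it misidentifies the Schwartz functions involved and relies on an invalid diagonalization of $T$.

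\textbf{The geometric side (your Step 2).} This is essentially what is needed. Work in the Cartan basis $f_m^\circ$ of $\mathcal{H}(V)$, which is more convenient than powers of $f_2^\LRZ=f_1^\circ+(q+1)$. By Li--Rapoport--Zhang, $f_a^\circ\mathcal{Z}^\mathbb{L}(x_1)$ intersects like $\mathcal{Z}^\mathbb{L}(\pi^a x_1)-\mathcal{Z}^\mathbb{L}(\pi^{a-1}x_1)$. The Kudla--Rapoport degree formula then finishes; diagonalizing is harmless there, because $\mathcal{Z}(\pi^a x_1)\cap\mathcal{Z}(\pi^b x_2)$ depends only on the lattice spanned.

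\textbf{First problem on the analytic side.} Your description of $f*\bm{1}_\Xi$ is wrong. One has $f*\bm{1}_\Xi=\sum_{gK}f(g)\bm{1}_{g\Xi}$, which involves only self-dual lattices; explicitly $f_m^\circ*\bm{1}_\Xi=\sum_{M\sim_m\Xi}\bm{1}_M$. Type $2$ vertex lattices can at most enter as intermediate steps (e.g.\ in describing $f_2^\LRZ$), never in the resulting function. The scalings $\pi^k\Xi$ with $k\neq 0$ never occur, since they are not self-dual. So the obstacle you identify is not the real one, and the Step 3 reduction, which presupposes $f\bm{1}_\Xi\approx\sum_k c_k\bm{1}_{\pi^{-k}\Xi}$ up to type-$2$ corrections, has nothing to stand on. What must actually be computed is $W_T(s,\bm{1}_M\otimes\bm{1}_N)=\Den_T(s,M\times N)$ for pairs of distinct self-dual lattices $M\sim_m N$. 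This is followed by a count, in the Bruhat--Tits tree, of pairs $(M,N)$ with $M\sim_a\Xi$ and $N\sim_b\Xi$.

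\textbf{Second, more serious problem.} You cannot replace $T$ by $\mathrm{diag}(u,u'\pi^{2\Delta-1})$ via $\GL_2(\co_{F'})$-equivalence. Proposition \ref{prop:density invariance} only gives $\Den_{T\action g}(Cg)=\Den_T(C)$. But $(M\times N)g\neq M\times N$ when $M\neq N$; equivalently, $\varphi(xg)\neq\varphi(x)$ for $\varphi=f_1\bm{1}_\Xi\otimes f_2\bm{1}_\Xi$.

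This dependence on $T$ itself is genuine. Put $x_2'=x_2-t_1^{-1}h(x_2,x_1)x_1$, so $(x_1,x_2')$ is an orthogonal basis of the same lattice. Then $\chi(f_1^\circ\mathcal{Z}^\mathbb{L}(x_1)\otimes^\mathbb{L}\mathcal{Z}^\mathbb{L}(x_2))=1$. By contrast, $\chi(f_1^\circ\mathcal{Z}^\mathbb{L}(x_1)\otimes^\mathbb{L}\mathcal{Z}^\mathbb{L}(x_2'))$ equals $1+q$ if $\Delta=1$, and $1+q\Delta+q^2(\Delta-1)$ if $\Delta\ge 2$. So a Cho--Yamauchi style recursion at a diagonal $T$ computes a different quantity. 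The hypothesis $t_1,t_2\in\co_F^\times$ has to be used inside the density computation itself.

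\textbf{What the paper does instead.} It splits $M\times N$ according to $N^{(i)}=\{y\in N:\pi^i y\in M\}$.
\begin{itemize}
\item On $M\times(M\cap N)$, a lower-triangular Gram--Schmidt change of basis does preserve the set, because $M\cap N\subset M$. So the reduction formula, Proposition \ref{prop:reduction}, applies there.
\item On the remaining pieces it counts points on a smooth integral model of the moment map whose target has off-diagonal entries in $\pi^{-i}\co_{F'}$ (Lemma \ref{lem:pre 2x2}). Unit diagonal entries are essential here.
\end{itemize}
The result, Corollary \ref{cor:poly 2x2}, depends only on $m$ and $\ord_F\det(T)$. Combined with the tree count of Lemma \ref{lem:tree count}, it gives the analytic side. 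Your proposal contains no workable substitute for this step.
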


The proof of Theorem \ref{Thm A} is by explicit calculation of both sides. 
The calculation of the intersection multiplicity on the left-hand side reduces  quickly to known formulas of  Kudla-Rapoport \cite{KR1} and Li-Rapoport-Zhang \cite{LRZ}.  
 Most of this paper is devoted to the  calculation of the right-hand side, which turns out to be rather involved.

If $f_1=f_2=1$,  then we are in the $n=2$ case of the original arithmetic Siegel-Weil formula.  
This case was already proved by Kudla-Rapoport \cite{KR1}, who reduced   the calculation of  $W_T( s ,    \mathbf{1}_\Xi \otimes  \mathbf{1}_\Xi )$ to known formulas for representation densities of $T$ by Hermitian lattices.

To go beyond the case $f_1=f_2=1$,  one needs a more general  theory of representation densities.   
 Although there are  hints in the literature pointing in the right direction, for example    \cite{CY, Cho},  the necessary theory does not seem to have been explored in a systematic way.
 In the next subsection, we explain what sort of generalized  representation densities we study.

%%%%%%%%%%%%%%%%%%%%%%%%%%%%

\subsection{Representation densities}
\label{ss:intro density}

%%%%%%%%%%%%%%%%%%%%%%%%%%%%

Now let $F'/F$ be any  separable quadratic extension of local fields (possibly ramified, and $\mathrm{char}(F)=2$ is allowed).
As above, let $(V,h)$ be a nondegenerate Hermitian space over $F'$ of dimension $n$.  
For a tuple $x=(x_1,\ldots, x_d) \in V^d$, we denote by 
\[
\tau(x) \in \Herm_d(F')
\]
 the  moment matrix of $x$, whose $(i,j)$-entry is the Hermitian pairing $h(x_i,x_j)$.

For a nonsingular Hermitian matrix $T\in \Herm_d(\co_{F'})$ and an $\co_{F'}$-lattice $M \subset V$ on which the Hermitian form is $\co_{F'}$-valued, the classical representation density of $T$ by $M$ is meant to measure  the density of the subset 
\[
\{ x   \in M^d : \tau(x) = T \} \subset M^d.
\]
This density is $0$ if interpreted literally, so the actual definition is more involved.
Fix a uniformizer $\pi \in \co_F$, and for any $k >0$ define 
\[
A_{\pi^k}  (  M  , T )
= \left\{ x\in ( M/\pi^kM)^d  : \tau( \widetilde{x} ) \equiv  T \mod \pi^k \Herm_d(\co_{F'})^\vee  \right\}.
\]
Here $\widetilde{x} \in M^d$ is any lift of $x$, and 
\[
 \Herm_d(\co_{F'})^\vee
 = \left\{ t \in \Herm_d(F') : \mathrm{Tr}(ts) \in \co_F , \forall \, s \in \Herm_d(\co_{F'}) \right\}.
\]
The classical representation density of  $T$ by $M$ is defined, as in   \cite[(3.1)]{Hir} or  \cite[(10.1)]{KR2}, as 
\[
\alpha(M,T)  = 
 \frac{ \# A_{\pi^k}  ( M , T  ) }{  q^{  k d (2  n - d ) }  } 
\]
for any $k \gg 0$.     Here $q$ is the residue cardinality of $F$.

The motivation for this  definition of $\alpha(M,T)$  is a practical one: it is essentially a special value of a Whittaker function $W_T(s, \mathbf{1}_{M^d})$ associated to the Schwartz function  $\mathbf{1}_{M^d} \subset S(V^d)$, and one can use an interpolation trick to express other values of this function also as classical representation densities. 
Thus the calculation of $W_T(s, \mathbf{1}_{M^d})$ is reduced to the problem of computing classical representation densities.

Whittaker functions of the type  appearing  in the conjectural equality  \eqref{intro conjecture} are more complicated, being  linear combinations of Whittaker functions of the form  $W_T(s, \mathbf{1}_{M_1} \otimes \cdots \otimes \mathbf{1}_{M_d})$ for  $\co_{F'}$-lattices $M_1,\ldots, M_d \subset V$.
Thus one is naturally led to study the density of subsets of the form
  \[
\{ x   \in M_1 \times \cdots \times M_d : \tau(x) = T \} \subset  M_1 \times \cdots \times M_d ,
\]
and more generally of the form 
\[
\{ x \in C : \tau(x) = T \} \subset C
\]
for \emph{any} compact open subset $C \subset V^d$.  
 As above, these densities are $0$ if interpreted literally.
A substantial part of this paper is dedicated to generalizing the definition of representation densities to cover this sort of situation, and to developing enough computational techniques to find an explicit formula for the particular representation densities relevant to the Whittaker function appearing in Theorem \ref{Thm A}.
 
 In \S \ref{ss:representation densities}, we attach to a nonsingular $T\in \Herm_d(F')$ and a nonempty compact open subset $C \subset V^d$,  a representation density $ \Den_T(C) \in \Q$.  The simplest definition to state is
\[
\Den_T(C) =
  \frac{   \Vol ( \{ x\in  C  : \tau(x)   \in U  \}  ) }{   \Vol( C )   }  \cdot \frac{ \Vol( \Herm_d(\co_{F'}) ^\vee )  }{ \Vol(U) } ,
\]
where $U \subset \Herm_d(F')$ is any sufficiently small compact open neighborhood of $T$.
The first factor on the right-hand side is a quotient of volumes in  $V^d$,  the second  is a quotient of volumes in  $\Herm_d(F')$, and all volumes are computed with respect to Haar measures on these $F$-vector spaces.
One must prove that this quantity is independent of the sufficiently small $U$, and for this it turns out to be easiest to  first formulate a different definition (Definition \ref{def:density}) of $\Den_T(C)$, and then prove  that it agrees with the right-hand side above for any sufficiently small $U$.

 In the special case where $C=M^d$ for an $\co_{F'}$-lattice, our definition  recovers the classical representation density
 \[
 \Den_T(M^d) = \alpha(M,T).
 \] 
 
A warning: if  the nonsingular Hermitian matrix $T$ has entries in $\co_{F'}$, then one can use it to endow $(\co_{F'})^d$ with the structure of a  Hermitian lattice.  
The classical representation density $\alpha(M,T)$ only depends on the isometry class of this Hermitian lattice, and this allows one to reduce the calculation of $\alpha(M,T)$ to particularly simple choices of $T$.  For example, if one excludes  the case in which  $F'/F$ is ramified with residue characteristic $2$, then one may assume that $T$ is diagonal.
This does not work for our more general representation densities; for the choices of $C \subset V^d$ of interest to us, $\Den_T(C)$ genuinely depends on the matrix $T$, not just the isometry class of the associated Hermitian lattice.

 In \S \ref{ss:density polynomial} we construct a \emph{density polynomial}  
 \[
 \Den_T(s,C) \in \Q[q^{-s}],
 \]
  whose value at $s=0$ is  $\Den_T(C)$.
 For any $\varphi \in S(V^d)$, we show that the associated Whittaker function $W_T(s, \varphi)$ can be expressed as a linear combination of these density polynomials, by proving  in \S \ref{ss:whittaker}  that $\Den_T(s,C)$ is essentially  $W_T(s, \mathbf{1}_C)$.

 In the more technical  \S \ref{s:tools} and \S \ref{s:density formulas} we  develop some general tools for computing $\Den_T(C)$, and then apply these to compute the particular density polynomials that are relevant to the calculation of the right-hand side of the equality of Theorem \ref{Thm A}.  
 
 With these calculations in hand, in \S \ref{s:ASW}  we finally turn to the task outlined in the previous subsection:  formulating a conjectural version of the arithmetic Siegel-Weil formula that incorporates the action of the spherical Hecke algebra, and proving the special case stated as Theorem \ref{Thm A}.
 
%%%%%%%%%%%%%%%%%%%%%%%%%%%

\subsection{Notation}

%%%%%%%%%%%%%%%%%%%%%%%%%%%%

Throughout the paper,  $F'/F$ is a separable quadratic extension of local fields, and $\sigma \in \Gal(F'/F)$ is the nontrivial automorphism.  
We denote by  $\pi  \in \co_F$  a uniformizing parameter, and by $q$   the cardinality of the residue field $\co_F/\pi\co_F$ .   In  \S \ref{s:density formulas} and \S \ref{s:ASW}  we specialize to the case of an unramified quadratic  extension of finite extensions of  $\Q_p$ with $p$ odd.

If $R$ is an $\co_F$-algebra, we often abbreviate
\[
R' = R \otimes_{\co_F} \co_{F'} ,
\]
and by mild abuse of notation write $\sigma = \mathrm{id} \otimes \sigma \in \Aut(R'/R)$.
Let  $\Herm_d(R')$ be  the $R$-module of $d\times d$ Hermitian matrices valued in $R'$.

If $(V,h)$ is a finite dimensional Hermitian space over $F'$, and  $R$ is  an $F$-algebra, then
\[
V_R= V\otimes_F R
\]
is naturally a Hermitian module  over  the ring $R'$.
Hermitian pairings  are always linear in the first variable, and $\sigma$-linear in the second.

%
%If $W$ is a Hermitian space for $F'/F$, then 
%\[
%W_R = W \otimes_F R
%\]
%is naturally a Hermitian $R'$-module, and we define the  the \emph{moment matrix} of a $d$-tuple $x=(x_1,\ldots, x_d) \in W_R^d$ as the matrix of Hermitian inner products
%\[
%\tau(x)  \define  (  h(x_i,x_j) )_{1 \le i,j\le d}  \in  \Herm_d(R') .
%\]
%
%

%Up to scaling, any finite dimensional $F$-vector space $W$ has  a unique translation-invariant measure.  
%For nonempty compact open subsets  $C_1,C_2 \subset W$, the quotient of their volumes
%$ \Vol(C_1)  /   \Vol(C_2) $ is  therefore a  well-defined rational number, independent of any choices.  
%For example if $C_2 \subset C_1$ are $\co_F$-lattices in $W$, then
%$
% \Vol(C_1) /  \Vol(C_2) = \#(C_1/C_2).
%$

%%%%%%%%%%%%%%%%%%%%%%%%%%%

\section{Generalized representation densities}
%\label{s:general density}

%%%%%%%%%%%%%%%%%%%%%%%%%%%%

Let $(V,h)$ be a finite dimensional Hermitian space over  $F'$, and  fix an integer $d\ge 1$.

Our goal in this section is to define, for  any nonsingular Hermitian matrix $T \in \Herm_d(F')$,  and any compact open subset $C \subset V^d$,   a generalized representation density $\Den_T(C) \in \Q$.   
We then extend the definition by constructing  a density polynomial 
$
\Den_T(s, C ) \in \Q [ q^{-s}],
$
and explain its connection   to   Whittaker functions for principal series representations of the  quasi-split unitary group $\mathrm{U}(d,d)$ over $F$.

%%%%%%%%%%%%%%%%%%%%%%%%%%%

\subsection{The moment morphism}

%%%%%%%%%%%%%%%%%%%%%%%%%%%%

Our initial approach to representation densities will be algebro-geometric in nature.
Let $X=V^d$ and $Y=\Herm_d(F')$, but regarded as $F$-schemes.  
In other words, for any $F$-algebra $R$, the functors of points are 
\begin{equation}\label{XandY}
X(R) = V_R^d \qquad \mbox{and} \qquad Y(R)=\Herm_d( R' ).
\end{equation}

  Any  $d$-tuple $x=(x_1,\ldots, x_d) \in V_R^d$  has a 
\emph{moment matrix}
 \begin{equation}\label{moment matrix}
\tau(x)  =  (  h(x_i,x_j) )_{1 \le i,j\le d}  \in  \Herm_d(R') ,
\end{equation}
whose entries are just the Hermitian pairings of the components of $x$.
    The \emph{moment morphism} 
\begin{equation}\label{moment morphism}
\tau : X\to Y
\end{equation}
is the  morphism of $F$-schemes whose induced map on $R$-points
 sends  $x\in X(R)$ to its moment matrix \eqref{moment matrix}.

The proof of the following lemma  is essentially the same as that of \cite[Lemma 5.5.2]{GY} and  \cite[Lemma 3.10]{CY}.
We repeat it here, as variants of the argument will be needed later.

\begin{lemma}\label{lem:tangent moment}
Suppose $\kappa$ is an algebraically closed  field containing  $F$. 
The moment morphism $ \tau : X_{\kappa} \to Y_{\kappa}$ is smooth at every point 
\[
x=(x_1,\ldots, x_d) \in  V_\kappa^d = X(\kappa) 
\]
 for which the $ \kappa'$ submodule generated by $x_1,\ldots, x_d \in V_\kappa$ is free of rank $d$.
\end{lemma}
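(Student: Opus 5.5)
Since $X_\kappa$ and $Y_\kappa$ are affine spaces over $\kappa$, hence smooth $\kappa$-varieties, smoothness of $\tau$ at a $\kappa$-point $x$ is equivalent to surjectivity of the differential $d\tau_x : T_x X_\kappa \to T_{\tau(x)} Y_\kappa$. I will compute this differential and show that it is surjective.

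The tangent spaces are $T_xX_\kappa = V_\kappa^d$ and $T_{\tau(x)}Y_\kappa = \Herm_d(\kappa')$. Expanding $\tau(x+\epsilon y)$ over $\kappa[\epsilon]/(\epsilon^2)$ gives
\[
d\tau_x(y) = \big( h(y_i,x_j) + h(x_i,y_j) \big)_{1\le i,j\le d},
\]
that is, $d\tau_x(y) = S + S^*$, where $S = (h(y_i,x_j))_{i,j}$ and $S^*$ denotes the $\sigma$-conjugate transpose. Every $t \in \Herm_d(\kappa')$ can be written as $S+S^*$ for some $S \in M_d(\kappa')$. For instance, take $S$ to be the strictly upper-triangular part of $t$ plus half its diagonal. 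If $2$ is not invertible in $\kappa$, this formula fails, and I will treat that case separately below.

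It therefore suffices to show that the map $V_\kappa^d \to M_d(\kappa')$, $y \mapsto (h(y_i,x_j))_{i,j}$, is surjective. This map is a direct sum over $i$ of the single map $V_\kappa \to (\kappa')^d$, $v \mapsto (h(v,x_j))_j$. This single map is $\kappa'$-linear in $v$. Because $h$ is nondegenerate over $\kappa'$ and $x_1,\ldots,x_d$ generate a free $\kappa'$-submodule of rank $d$, the map is surjective. To see this, extend $x_1,\ldots,x_d$ to a $\kappa'$-basis of $V_\kappa$ and use the dual basis with respect to $h$. Care is needed because $\kappa' = \kappa\otimes_F F' \cong \kappa\times\kappa$ when $\kappa$ is algebraically closed, so $\kappa'$ is not a field. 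Under this splitting, $V_\kappa \cong W\times W^\vee$, with $h$ becoming the natural pairing. Freeness of rank $d$ means the components of the $x_j$ are linearly independent in both factors, and surjectivity then follows factorwise.

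The main obstacle is the characteristic-$2$ case, where $F'/F$ may be inseparable-looking: $F'/F$ is separable, but $\kappa$ has characteristic $2$, so $1/2$ is unavailable. Here I will avoid halving by using the structure $\kappa'\cong\kappa\times\kappa$ directly. Under this identification, $\sigma$ swaps the two factors, so $\Herm_d(\kappa') \cong M_d(\kappa)$ via $t = (A, A^{T})$. Given $t$, choose $S = (A, 0)$; then $S^* = (0, A^{T})$ and $S + S^* = t$. This works in every characteristic, so surjectivity of $d\tau_x$ follows uniformly. I will present this splitting argument for all cases, which also settles the linear-algebra step cleanly.
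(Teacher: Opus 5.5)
Your proof is correct and follows essentially the same route as the paper. Both factor the differential as $V_\kappa^d \to \mathrm{Mat}_{d\times d}(\kappa') \to \Herm_d(\kappa')$ and check that each arrow is surjective: the paper uses a dimension count on the kernel $W^d$ and surjectivity of $\mathrm{Tr}:\kappa'\to\kappa$, while you use dual bases and the splitting $\kappa'\cong\kappa\times\kappa$, which handles characteristic $2$ equally well. You also cite the pointwise tangent-space criterion for smoothness directly, whereas the paper justifies that step by noting the freeness condition is Zariski open.
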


\begin{proof}
The  map  on tangent spaces
\begin{equation}\label{tangent map}
V_\kappa^d   \iso  T_x X_\kappa     \to    T_{\tau(x)} Y_\kappa \iso    \Herm_d(\kappa')
\end{equation}
 induced by the moment morphism is 
\[
 (v_1,\ldots, v_d)   \mapsto  \big( h(x_i,v_j)  + h(v_i,x_j)   \big)_{ 1\le i , j \le d }  .
\]
In order to show  that  \eqref{tangent map}  is surjective, we factor it as the composition
\begin{equation}\label{tangent factor}
V_\kappa^d \to \mathrm{Mat}_{d \times d} (\kappa')  \to \Herm_d(\kappa')
\end{equation}
in which the first arrow is 
\[
(v_1, \ldots, v_d) \mapsto  \big( h(x_i,v_j)  \big)_{ 1\le i , j \le d },
\] 
and the second is $A\mapsto A+  {}^tA^\sigma$.  

If  $W \subset V_\kappa$ denotes the $ \kappa'$-submodule of elements perpendicular to all  $x_1,\ldots, x_d$, 
our assumption on $x$ implies that $W$ is free over $\kappa'$, and satisfies 
\[
\mathrm{rank}_{\kappa'}(W)  =  \mathrm{rank}_{\kappa'}(V_\kappa)   -d  .
\]
The kernel of the first arrow in \eqref{tangent factor} is $W^d \subset V_\kappa^d$, and one checks that this arrow is surjective by comparing $\kappa$-dimensions of the kernel, source, and target.
The second arrow in \eqref{tangent factor} is easily seen to be surjective, using  surjectivity of the trace map $\kappa'\to \kappa$.

To complete the proof, note that the condition imposed on $x\in X(\kappa)$ in the statement of the lemma is a Zariski open condition.
If $U \subset X_\kappa$ is the largest open subset on which this condition holds, then $\tau : U \to Y_\kappa$ is a morphism between smooth $\kappa$-varieties, inducing  surjections on tangent spaces.  It is therefore smooth. 
\end{proof}

Denote by $Y^\mathrm{ns} \subset Y$ the open subscheme of Hermitian matrices with invertible determinant, and by $X^\mathrm{ns} =\tau^{-1}(Y^\mathrm{ns}) \subset X$ its preimage under the moment morphism.  
It follows from Lemma \ref{lem:tangent moment} that the moment morphism restricts to a smooth map
\begin{equation}\label{smooth moment}
\tau : X^\mathrm{ns}  \to Y^\mathrm{ns}.
\end{equation}

Being schemes of $F$-vector spaces,  $X$ and $Y$ admit nowhere vanishing translation-invariant  top-degree differential forms 
\begin{equation}\label{mewtwo}
\mu_X \in H^0( X , \Omega_X^{\dim(X)} ) 
\qquad\mbox{and}\qquad
\mu_Y \in H^0( Y , \Omega_Y^{\dim(Y)} ) .
\end{equation}
These are unique up to scaling by $F^\times$.  
As in \cite[\S 3.2]{GY} and \cite[\S 3.1]{CY}, the restrictions 
$
\mu_{X^\mathrm{ns}} = \mu_X|_{X^\mathrm{ns}}
$
and
$
\mu_{Y^\mathrm{ns}} = \mu_Y|_{Y^\mathrm{ns}}
$
  determine a nowhere vanishing section
\begin{equation}\label{quotient form}
 \frac{ \mu_{X^\mathrm{ns} } } {  \tau^*\mu_{Y^\mathrm{ns} }}
 \in H^0( X^\mathrm{ns} , \Omega_{X^\mathrm{ns} / Y^\mathrm{ns} }^{\dim(X)-\dim(Y)} ) .
\end{equation}
Indeed,  smoothness of \eqref{smooth moment} implies the exactness of the sequence
\[
0 \to \tau^* \Omega^1_{ Y^\mathrm{ns} } \to \Omega^1_{ X^\mathrm{ns} } \to \Omega^1_{ X^\mathrm{ns} / Y^\mathrm{ns} } \to 0 
\]
of vector bundles on $X^\mathrm{ns}$, 
and taking top exterior powers yields a canonical isomorphism of line bundles
\[
\Omega^{\dim(X)}_{ X^\mathrm{ns} }  \iso   \Omega^{ \dim(X) - \dim(Y) } _{ X^\mathrm{ns} / Y^\mathrm{ns} }  \otimes  \tau^* \Omega^{\dim(Y)}_{ Y^\mathrm{ns} }  .
\]
The form \eqref{quotient form} is defined by the relation
\[
\mu_{X^\mathrm{ns}}   =  \frac{ \mu_{X^\mathrm{ns} } } {  \tau^*\mu_{Y^\mathrm{ns} }} \otimes \tau^*\mu_{Y^\mathrm{ns} }.
\]

A nonsingular Hermitian matrix 
$
T \in  Y^\mathrm{ns}(F) 
$
determines a smooth $F$-scheme $X_T$, defined by the cartesian diagram 
\[
\begin{tikzcd}
 { X_T }\ar[r] \ar[d]  &  { X^\mathrm{ns} } \ar[d , "\tau" ]   \\
{ \Spec(F) } \ar[r, "T" ' ]  & { Y^\mathrm{ns} . }
\end{tikzcd}
\]
In other words, $X_T$ is the smooth variety with $F$-points
\begin{equation}\label{X_T points}
X_T(F) = \{ x\in V^d : \tau (x)  = T\}.
\end{equation}
The form  \eqref{quotient form}  pulls back   to a  nowhere vanishing top-degree form 
\begin{equation}\label{mewT}
\mu_{X_T  }   \define    \frac{ \mu_{X^\mathrm{ns} } } { \tau^* \mu_{Y^\mathrm{ns} }}\Big|_{ X_T }   \in H^0(X_T , \Omega_{ X_T  }^{\dim(X_T) } ) .
\end{equation}
This differential form will be essential to our definition of generalized representation densities in the next subsection.

%%%%%%%%%%%%%%%%%%%%%%%%%%%%%%%%%%%%%

\subsection{Representation densities}
\label{ss:representation densities} 

%%%%%%%%%%%%%%%%%%%%%%%%%%%%%%%%%%%%%

For any smooth $F$-variety $U$ we may regard $U(F)$ as an \emph{$F$-analytic manifold} in the sense of \cite[\S 2.4]{Igusa}.
In particular, $U(F)$ is a totally disconnected, locally compact, Hausdorff topological space.  

As explained in \cite[Chapter 7.4]{Igusa}, the top-degree forms  $\mu_X$ and $\mu_Y$  fixed in \eqref{mewtwo} induce measures, denoted the same way,  on the $F$-analytic manifolds 
\[
X( F ) = V^d  \qquad\mbox{and} \qquad   Y( F ) = \Herm_d(F' ).
\]
These are just the usual translation-invariant Haar measures on these topological vector spaces.  
Less trivially, for any nonsingular $T \in \Herm_d(F')$ the top-degree form
 $\mu_{X_T}$ from \eqref{mewT} induces  a measure on the $F$-analytic manifold 
$X_T(F)$ from \eqref{X_T points}.
These three measures are related by the following Fubini-style formula.

\begin{proposition}\label{prop:fubini}
If   $f : X^\mathrm{ns}(F) \to \C$ is a locally constant compactly supported function, then
\[
\Phi_f(T) \define  \int_{X_T(F)}  f(x) \, d\mu_{X_T}(x)
\]
is  a locally constant compactly supported function  of   $T \in Y^\mathrm{ns}(F)$, and
\begin{equation*}
\int_{ X (F) } f(x) \, d\mu_X (x) =
\int_{ Y(F) } \Phi_f( T ) \, d\mu_Y  ( T ).
\end{equation*}
\end{proposition}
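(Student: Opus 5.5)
The plan is to reduce to the local structure of the smooth morphism \eqref{smooth moment} and then use the $F$-analytic implicit function theorem, as in \cite[Chapter 7.6]{Igusa} or \cite[\S 3]{GY}.

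Since $f$ is locally constant and compactly supported on $X^\mathrm{ns}(F)$, it is a finite linear combination of indicator functions of small compact open sets. Both sides of the claimed identity are linear in $f$, and local constancy and compact support of $\Phi_f$ are preserved under finite sums. So it suffices to treat $f=\mathbf{1}_B$ for $B$ an arbitrarily small compact open neighborhood of a given point $x_0\in X^\mathrm{ns}(F)$.

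At $x_0$, smoothness of $\tau$ on $X^\mathrm{ns}$ (Lemma \ref{lem:tangent moment}, applied after base change to an algebraic closure) implies that the differential $d\tau_{x_0}:V^d\to\Herm_d(F')$ is surjective over $F$. We choose $F$-linear coordinates on $X(F)=V^d$, say $(y,z)\in F^{m}\times F^{k}$ with $m=\dim Y$, so that the $y$-coordinates restrict to a linear isomorphism on a complement of $\ker d\tau_{x_0}$. By the $F$-analytic inverse function theorem \cite[\S 2.1]{Igusa}, the map $x\mapsto(\tau(x),z(x))$ is an analytic isomorphism from a compact open neighborhood $B$ of $x_0$ onto a product $U\times D$, with $U\subset Y^\mathrm{ns}(F)$ and $D\subset F^k$ compact open. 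After shrinking, we may take $B$ to be exactly the preimage of $U\times D$.

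In these coordinates, each fiber $X_T(F)\cap B$ is identified with $D$ via $z$. The form $\mu_{X_T}$ is the restriction of the quotient form \eqref{quotient form}, and by its defining relation $\mu_X=(\mu_X/\tau^*\mu_Y)\otimes\tau^*\mu_Y$ it equals $g(x)\,dz_1\wedge\cdots\wedge dz_k$, where $g$ is the Jacobian of the coordinate change $x\mapsto(\tau(x),z)$ relative to $\mu_X$ and $\mu_Y\wedge dz$. Hence $\Phi_{\mathbf{1}_B}(T)=\mathbf{1}_U(T)\int_D|g(T,z)|\,dz$. The ordinary change of variables formula for analytic isomorphisms \cite[Chapter 7.4]{Igusa} together with Fubini on $U\times D$ then gives $\int_X\mathbf{1}_B\,d\mu_X=\int_U\int_D|g|\,dz\,d\mu_Y$, which is the desired identity for $f=\mathbf{1}_B$.

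The main obstacle is local constancy of $\Phi_f$. Continuity in $T$ is automatic from the formula above, but the statement asks for local constancy. The key point is that $|g|$ is a continuous function with values in $q^{\Z}\setminus\{0\}$ (since $g$ is nonvanishing analytic) on the compact set $\overline{U}\times D$. It is therefore locally constant, and after further shrinking $B$ it is constant. Then $\Phi_{\mathbf{1}_B}$ is a constant multiple of $\mathbf{1}_U$, which is locally constant with compact support. Compactness of $\mathrm{supp}(f)$ lets finitely many such $B$ cover it, disjointly after refinement, and this finishes the argument.
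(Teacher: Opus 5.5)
Your argument is correct and is essentially the paper's. The paper just invokes \cite[Theorem 7.6.1]{Igusa}, and what you wrote is the standard local-submersion (implicit function theorem) proof of that theorem, specialized to $\tau : X^\mathrm{ns}(F)\to Y^\mathrm{ns}(F)$.

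One slip in your choice of coordinates: the condition you need is that $z$ restricts to an isomorphism on $\ker d\tau_{x_0}$ (equivalently, that the plane $\{z=0\}$ is a complement of $\ker d\tau_{x_0}$). That is exactly what makes $x\mapsto(\tau(x),z(x))$ have invertible differential at $x_0$, and your stated condition on the $y$-coordinates does not guarantee it.
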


\begin{proof}
This is a consequence of \cite[Theorem 7.6.1]{Igusa}.
\end{proof}

\begin{definition}\label{def:density}
The \emph{representation density} of a nonsingular Hermitian matrix $T\in \Herm_d(F')$ with respect to a nonempty compact open subset $C \subset X(F)$ is the rational number
\[
\Den_T(C) =
 \mu_{ X_T  }(    C\cap X_T( F)  ) \cdot \frac{ \mu_Y(  \Herm_d(\co_{F'})^\vee  ) }{ \mu_X(C) } ,
\]
where $\Herm_d(\co_{F'})^\vee$ is as  in \S \ref{ss:intro density}.
For convenience, we set $\Den_T(\emptyset) =0$.
\end{definition}

\begin{remark}
Because $\mu_{X_T}$ was defined as the quotient \eqref{mewT},
the representation density $\Den_T(C)$ is independent of the choice of translation-invariant forms \eqref{mewtwo}.  
\end{remark}

\begin{remark}\label{rem:no represent}
If there is no $x \in C$ with $\tau(x)=T$, then  $\Den_T(C) =0$.
\end{remark}

The following proposition serves as a useful alternative definition  of representation densities.

\begin{proposition}\label{prop:natural density}
The representation density $\Den_T(C)$ is  locally constant  as a function of $T \in  Y^\mathrm{ns}(F)$. 
For all sufficiently small compact open neighborhoods $U \subset Y^\mathrm{ns}(F)$ of $T$ we have
\begin{equation}\label{general density}
\Den_T(C) =
  \frac{   \Vol ( \{ x\in  C  : \tau(x)   \in U  \}  ) }{   \Vol( C )   }  \cdot \frac{ \Vol( \Herm_d(\co_{F'}) ^\vee )  }{ \Vol(U) } .
\end{equation}
The first  volume quotient is taken in  $X(F)=V^d$,  the second is taken  in  $Y(F)=\Herm_d(F')$, and both are with respect to the translation-invariant Haar measures on these $F$-vector spaces.
\end{proposition}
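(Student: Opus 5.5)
The plan is to apply Proposition \ref{prop:fubini} to the function $f = \mathbf{1}_{C \cap \tau^{-1}(U)}$. Here $U \subset Y^\mathrm{ns}(F)$ is a compact open neighborhood of $T$, and $C\cap\tau^{-1}(U)$ is a compact open subset of $X^\mathrm{ns}(F)$.

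\emph{Step 1: local constancy.} Take $f = \mathbf{1}_{C\cap X^\mathrm{ns}(F)}$. This function is locally constant because $X^\mathrm{ns}(F)$ is open in $X(F)$. It is compactly supported on $X^\mathrm{ns}(F)$ only if $C\cap X^\mathrm{ns}(F)$ is compact, which may fail. To handle this, fix a compact open neighborhood $U_0\subset Y^\mathrm{ns}(F)$ of $T$ and use $f_0 = \mathbf{1}_{C\cap \tau^{-1}(U_0)}$ instead. This set is compact, being closed in $C$, since $\tau$ is continuous and $U_0$ is clopen. It is open, and it lies in $X^\mathrm{ns}(F)$.

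For $T'\in U_0$ we have
\[
\Phi_{f_0}(T') = \mu_{X_{T'}}(C\cap X_{T'}(F)),
\]
because $X_{T'}(F)\subset\tau^{-1}(U_0)$. Proposition \ref{prop:fubini} says $\Phi_{f_0}$ is locally constant. Multiplying by the constant $\mu_Y(\Herm_d(\co_{F'})^\vee)/\mu_X(C)$, we conclude that $T'\mapsto \Den_{T'}(C)$ is locally constant on $U_0$, and hence locally constant at every point of $Y^\mathrm{ns}(F)$.

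\emph{Step 2: the volume formula.} By Step 1, there is a compact open neighborhood $U_1\subset U_0$ of $T$ on which $\Phi_{f_0}$ is constant, equal to $\Phi_{f_0}(T)$. Let $U\subset U_1$ be any compact open neighborhood of $T$, and apply Proposition \ref{prop:fubini} to $f=\mathbf{1}_{C\cap\tau^{-1}(U)}$. For $T'\in U$ we have $\Phi_f(T')=\Phi_{f_0}(T')=\mu_{X_T}(C\cap X_T(F))$, and $\Phi_f$ vanishes off $U$. The Fubini formula therefore gives
\[
\mu_X(\{x\in C:\tau(x)\in U\}) = \mu_{X_T}(C\cap X_T(F))\cdot \mu_Y(U).
\]
Dividing by $\mu_X(C)\,\mu_Y(U)$ and multiplying by $\mu_Y(\Herm_d(\co_{F'})^\vee)$ yields \eqref{general density}. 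The quotients of Haar measures are independent of normalization, so the choice of $\mu_X,\mu_Y$ plays no role, and the formula holds with any Haar measures.

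\emph{Expected obstacle.} The only delicate point is the compact-support hypothesis in Proposition \ref{prop:fubini}. The set $C$ may meet the singular locus $\tau^{-1}(Y\smallsetminus Y^\mathrm{ns})$, so one cannot apply the proposition to $\mathbf{1}_C$ directly. This is why the argument cuts down by $\tau^{-1}(U_0)$ with $U_0$ compact in $Y^\mathrm{ns}(F)$ before invoking the proposition. One should also confirm that $U$ can be chosen inside $Y^\mathrm{ns}(F)$; this holds because $Y^\mathrm{ns}(F)$ is open and the compact open sets form a neighborhood basis in the totally disconnected space $Y(F)$. The statement itself restricts $U$ to lie in $Y^\mathrm{ns}(F)$, so this is consistent.
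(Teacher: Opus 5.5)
Your proposal is correct and follows the paper's proof essentially step for step: you apply Proposition \ref{prop:fubini} to the indicator of $C\cap\tau^{-1}(U)$ for compact open $U\subset Y^\mathrm{ns}(F)$, read off local constancy of $T\mapsto \mu_{X_T}(C\cap X_T(F))$ from that of $\Phi_f$, and then shrink $U$ until this function is constant, so that the Fubini integral collapses to $\mu_{X_T}(C\cap X_T(F))\cdot\mu_Y(U)$. Your explicit check that $C\cap\tau^{-1}(U_0)$ is compact open in $X^\mathrm{ns}(F)$ spells out the compact-support point that the paper uses implicitly.
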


\begin{proof}
Let $\bm{1}_U$ be the characteristic function of a   compact open subset $U \subset Y^\mathrm{ns}(F)$.
If we apply Proposition \ref{prop:fubini}  with $f$ equal to the characteristic function of 
$
C \cap  \tau^{-1}(U) \subset X^\mathrm{ns}(F), 
$
we find that 
\begin{equation}\label{good den 1}
\Phi_f(T)   =  \mu_{X_T} (   C \cap X_T(F)  )  \cdot \bm{1}_U(T)   
\end{equation}
is a locally constant function of $T \in Y^\mathrm{ns}(F)$, and that 
\begin{equation}\label{good den 2}
\mu_X  ( C \cap   \tau^{-1}(U)  )     =   \int_{ U  }  \mu_{X_T} (   C \cap X_T(F)  ) \, d\mu_Y  ( T ) .
\end{equation}

Because the  right-hand side of \eqref{good den 1} is a locally constant function of $T \in  Y^\mathrm{ns}(F)$ for \emph{every} choice of   $U$, it follows that 
 $\mu_{X_T} (   C \cap X_T(F)  )$ is also a locally constant  function of $T \in  Y^\mathrm{ns}(F)$.
 
If  $U$ is a compact open subset small enough that  $\mu_{X_T} (   C \cap X_T(F)  )$  is independent of $T\in U$,   then  for any such $T$ the equality \eqref{good den 2} simplifies to 
\[
 \mu_X  ( C \cap   \tau^{-1}(U)  )      = 
 \mu_{X_T} (   C \cap X_T(F)  )  \cdot   \mu_Y(U)  .
\]
This allows us to rewrite Definition \ref{def:density}  as
\[
\Den_T(C) =
\frac{ \mu_X  ( C \cap   \tau^{-1}(U)  )    } { \mu_X(C)   }   \cdot \frac{ \mu_Y(  \Herm_d(\co_{F'})^\vee  ) }{  \mu_Y(U)  } ,
\]
which  is equivalent to \eqref{general density}.
\end{proof}

%\begin{proposition}
%Suppose the compact open subset $C \subset V^d$ has the property that the moment matrix of every $x \in C$ is nonsingular.  The function $T \mapsto \Den_T(C)$, initially defined only for nonsingular $T \in \Herm_d(F')$, extends uniquely to a locally constant compactly supported function on all of $\Herm_d(F')$.
%This extension satisfies $\Den_T(C) =0$ whenever $\det(T)=0$.
%\end{proposition}
%
%\begin{proof}
%
%\end{proof}

%\begin{remark}
%The volume forms $\mu_X$ and $\mu_Y$ in \eqref{mewtwo} are only defined up to scaling by $F^\times$.
%But because \eqref{mewT} is defined as their  ratio,  the representation density $\Den_T(C)$ is independent of these choices.
%Of course this is also obvious  from \eqref{general density}.
%\end{remark}

\begin{remark}\label{rem:general disjoint}
If $C=C_1 \sqcup \cdots \sqcup C_k\subset V^d$ is a disjoint union  of nonempty compact open subsets then
\[
\Den_T(C) = \Den_T(C_1) \cdot \frac{\Vol(C_1)}{\Vol(C)}  + \cdots + \Den_T(C_k) \cdot \frac{\Vol(C_k)}{\Vol(C)} .
\]
%The volume quotients here are understood as in \eqref{volume quotient}.
\end{remark}

\begin{remark}\label{rem:classical density}
Fix a nonsingular Hermitian matrix $T\in \Herm_d(\co_{F'})$, and   an $\co_{F'}$-lattice  $M\subset V$
 on which the Hermitian form is $\co_{F'}$-valued.
Recall that  $\pi \in \co_F$ is  a uniformizer.   Taking  $C=M^d \subset V^d $ and 
\[
U= T+\pi^k \Herm_d(\co_{F'})^\vee
\] 
in Proposition \ref{prop:natural density} shows that 
$
\Den_T(M^d)  = \alpha(M,T) ,
$
where the right-hand side is the   classical representation density   defined in \S \ref{ss:intro density}.
\end{remark}

%%%%%%%%%%%%%%%%%%%%

\subsection{Further properties of the density}

%%%%%%%%%%%%%%%%%%%%%%

 Given $g \in \GL_d(F')$ and  $x \in V^d$, we define $x g \in V^d$ using the usual rule for multiplying a row vector by a matrix.  
Define a right action of  $\GL_d(F')$  on  $\Herm_d(F')$ by 
\begin{equation}\label{change of basis}
T \action g = {}^t g\cdot T \cdot \sigma(g),
\end{equation}
so that the moment morphism $\tau : V^d \to \Herm_d(F')$ satisfies 
\begin{equation}\label{moment shift}
\tau( x g) =  \tau(x) \action g.
\end{equation}

The representation density of Definition \ref{def:density} satisfies an invariance property under these $\GL_d(F')$-actions.

\begin{proposition}\label{prop:density invariance}
Let $C \subset V^d$ be a compact open subset.
For any $g\in \GL_d( F' )$ and  nonsingular $T \in \Herm_d(F')$, we have
\[
\Den_{T \action g} ( C g  )  = \Den_T(C )   \cdot q^{d \ord_F(\Delta)},
\]
where $q$ is the residue cardinality of $F$, and $\Delta=\mathrm{Nm}_{F'/F}( \det(g))$.
\end{proposition}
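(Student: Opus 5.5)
The plan is to use the volume-quotient characterization of Proposition \ref{prop:natural density}. Fix $g \in \GL_d(F')$ and let $\rho_g : V^d \to V^d$ be $x \mapsto xg$, and $\beta_g : \Herm_d(F') \to \Herm_d(F')$ be $T \mapsto T\action g$. By \eqref{moment shift}, $\tau \circ \rho_g = \beta_g \circ \tau$. For a sufficiently small compact open neighborhood $U$ of $T$, the set $\beta_g(U) = U\action g$ is a compact open neighborhood of $T\action g$. It can be made arbitrarily small, since $\beta_g$ is a homeomorphism. Moreover
\[
\{ y \in Cg : \tau(y) \in U\action g\} = \rho_g\big( \{ x\in C : \tau(x) \in U\}\big).
\]
So I would apply \eqref{general density} on both sides with $U$ and $U\action g$, shrinking $U$ so that it is small enough for both $T$ and $T\action g$. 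The right-hand side for $\Den_{T\action g}(Cg)$ then equals
\[
\frac{\Vol(\rho_g(\{x\in C:\tau(x)\in U\}))}{\Vol(\rho_g(C))}\cdot \frac{\Vol(\Herm_d(\co_{F'})^\vee)}{\Vol(\beta_g(U))}.
\]

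Next I would compute the scaling factors. Since $\rho_g$ is $F$-linear, it scales Haar measure on $V^d$ by a constant, and this constant cancels in the first quotient. The map $\beta_g$ is also $F$-linear on $\Herm_d(F')$, so $\Vol(\beta_g(U)) = |\det_F(\beta_g)|_F \Vol(U)$. The statement then reduces to the identity
\[
|\det\nolimits_F(\beta_g)|_F = |\Delta|_F^{d} = q^{-d\,\ord_F(\Delta)}.
\]
Dividing by this factor gives exactly the claimed $q^{d\ord_F(\Delta)}$.

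The main step is computing the determinant of $T\mapsto {}^tg\,T\,\sigma(g)$ on the $d^2$-dimensional $F$-space $\Herm_d(F')$. I would show $\det_F(\beta_g) = \mathrm{Nm}_{F'/F}(\det g)^d$ up to a unit, and in fact exactly. The map $g\mapsto \det_F(\beta_g)$ is a polynomial character of $\GL_d(F')$ in the $F$-structure. Since $\beta_{gh}=\beta_h\circ\beta_g$, it suffices to check it on generators: diagonal matrices, and elementary or permutation matrices, which have $\Delta$ a unit norm (indeed $1$ or $\pm1$).

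For $g$ a permutation matrix, $\beta_g$ permutes coordinates up to the involution on off-diagonal pairs, so $|\det|=1$. For an elementary unipotent $g$, $\beta_g$ is unipotent, so $\det=1$. For $g=\mathrm{diag}(a_1,\dots,a_d)$, the diagonal entry $t_{ii}\in F$ scales by $\mathrm{Nm}(a_i)$. The off-diagonal pair $t_{ij}\in F'$ with $i<j$ (with $t_{ji}$ determined) scales by $a_i\sigma(a_j)$, whose $F$-determinant is $\mathrm{Nm}(a_i)\mathrm{Nm}(a_j)$. The total is $\prod_i \mathrm{Nm}(a_i)^{1+(d-1)} = \mathrm{Nm}(\det g)^d$. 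Since $\GL_d(F')$ is generated by these matrices, this completes the argument.

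The only delicate point is the choice of $U$: it must be small enough for Proposition \ref{prop:natural density} to apply at both $T$ and $T\action g$. This is handled by taking $U$ small, which is allowed since "sufficiently small" is preserved under the homeomorphism $\beta_g$.
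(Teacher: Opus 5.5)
Your proposal is correct and follows the paper's own proof. You transport $U$ to $U\action g$, use \eqref{moment shift} to identify $\{x\in Cg : \tau(x)\in U\action g\}$ with $\{x\in C:\tau(x)\in U\}\cdot g$, and apply Proposition~\ref{prop:natural density}, so that only the ratio $\Vol(U)/\Vol(U\action g)$ survives; your Jacobian computation $|\det_F(\beta_g)|_F = |\mathrm{Nm}_{F'/F}(\det g)|_F^{d}$ via diagonal, permutation and unipotent generators correctly fills in the step the paper leaves as ``the claim follows easily.''
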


\begin{proof}
If  $U \subset  \Herm_d(F')$ is   any compact open neighborhood of $T$ then 
\begin{align*}
  \{ x\in  C  g  : \tau(x)   \in U \action g \}   
  =
   \{ x\in  C   : \tau(x  )   \in U  \}  \cdot g 
 \end{align*}
 by \eqref{moment shift}.  Taking $U$ to be a sufficiently small neighborhood of $T$,   Proposition \ref{prop:natural density} implies the equality
 \[
 \Den_{T\action g} (C g ) = \Den_T( C ) \cdot \frac{ \Vol(U) }{ \Vol(U \action g) },
 \]
 and the claim follows easily.
\end{proof}

We will use the invariance property of Proposition \ref{prop:density invariance} to give an estimate for how small the compact open subset $U \subset Y^\mathrm{ns}(F)$ in Proposition \ref{prop:natural density} needs to be for \eqref{general density} to hold.   First, a lemma.

\begin{lemma}\label{lem:open orbit lemma}
Abbreviate $K_0 = \GL_d(\co_{F'})$.
\begin{enumerate}
\item
For any compact open subset $C \subset V^d$  there exists a compact open subgroup $K \subset K_0$ such that $C g=  C$ for all $g\in K$.
\item
Every  compact open subgroup  $K \subset K_0$ acts with  open orbits on the set of nonsingular matrices in $\Herm_d(F')$.   
\end{enumerate}
\end{lemma}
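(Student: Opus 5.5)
For part (1), I will use that $C$ is compact open in $V^d$. Choose an $\co_{F'}$-lattice $L \subset V$. Compactness and openness give integers $a \le b$ with
\[
\pi^{b} L^d \subset \pi^a L^d, \qquad C + \pi^b L^d = C, \qquad C \subset \pi^a L^d .
\]
The second condition says $C$ is a union of cosets of $\pi^b L^d$, which holds because $C$ is open and compact.

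Take $K = \ker\big(\GL_d(\co_{F'}) \to \GL_d(\co_{F'}/\pi^{b-a}\co_{F'})\big)$. For $g = 1 + \pi^{b-a} m \in K$ with $m \in \mathrm{Mat}_d(\co_{F'})$ and $x \in C$, we have
\[
xg = x + \pi^{b-a} x m .
\]
Here $xm \in \pi^a L^d$, since right multiplication by an $\co_{F'}$-matrix preserves $(\pi^aL)^d$. Hence $xg \in x + \pi^b L^d \subset C$. So $Cg \subset C$, and applying this to $g^{-1} \in K$ gives equality.

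For part (2), I will fix a nonsingular $T$ and study the orbit map $K \to \Herm_d(F')$, $g \mapsto T \action g$. I will show its image contains a neighborhood of $T$. By homogeneity, translating by $g_0\in K$ via the homeomorphism $S \mapsto S\action g_0$, this makes every orbit open.

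The differential of the orbit map at $g=1$ is
\[
A \mapsto {}^tA\, T + T\sigma(A).
\]
Writing $B = {}^tA\,T$, we have $T\sigma(A) = {}^t\sigma(B)$ because $T$ is Hermitian. So the differential is $B \mapsto B + {}^t\sigma(B)$, and since $T$ is invertible, $B$ ranges over all of $\mathrm{Mat}_d(F')$. This map is surjective onto $\Herm_d(F')$, as in the proof of Lemma \ref{lem:tangent moment}, using surjectivity of the trace.

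The orbit map is $F$-analytic (polynomial). Its differential at the identity is surjective, so by the $p$-adic implicit function theorem (submersion theorem for $F$-analytic manifolds, \cite{Igusa}) its image contains a neighborhood of $T$.

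To avoid the analytic theorem, one can instead argue concretely. For $S \in \Herm_d(F')$ close to $T$, I look for $g = 1 + A$ with $A$ small solving
\[
{}^t(1+A)\,T\,\sigma(1+A) = S .
\]
The ansatz ${}^tA = \tfrac12 (S-T)T^{-1}$ gives a linear solution modulo quadratic error, and successive approximation (Hensel/Newton iteration) converges in $\pi^N\mathrm{Mat}_d(\co_{F'})$ for large $N$. However, $\tfrac12$ is a problem in residue characteristic $2$. So I will rather pick a linear section $s$ of the trace-type map $B\mapsto B+{}^t\sigma B$ with bounded denominators, built from an element $\epsilon\in\co_{F'}$ with $\mathrm{Tr}(\epsilon)\neq0$.

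The main obstacle is this contraction estimate with controlled denominators. I expect it to be routine once $N$ is chosen larger than the valuations of $T^{-1}$ and of the section $s$. Citing the $F$-analytic inverse function theorem sidesteps it entirely.
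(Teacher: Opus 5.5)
Your proof is correct. Part (1) is essentially the paper's argument. The paper writes $C$ as a finite union of cosets $v_i+\Lambda_i^d$ and takes the principal congruence subgroup of level $m$ with $m v_i\in\Lambda_i^d$. You use a single lattice $L$ and level $\pi^{b-a}$, which is the same idea.

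For part (2) you take a genuinely different route.

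\emph{The paper's route.} It first shows that $K_0$-orbits are open by an arithmetic input. For $T'$ close to $T$, the Hermitian lattices $L_T$ and $L_{T'}$ on $\co_{F'}^d$ are isometric. For this it cites Kitaoka's Corollary 5.4.2 for quadratic forms and says the Hermitian case is entirely similar. It then passes to an arbitrary compact open $K\subset K_0$ by writing $T\action K_0$ as a finite disjoint union of $K$-orbits, each compact, hence closed, hence open.

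\emph{Your route.} You apply the $F$-analytic submersion theorem directly to the orbit map $g\mapsto T\action g$ on $K$ itself. Surjectivity of its differential $A\mapsto {}^tA\,T+T\sigma(A)$ reduces, via $B={}^tA\,T$, to the surjectivity of $B\mapsto B+{}^t\sigma(B)$ already used in Lemma \ref{lem:tangent moment}.

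\emph{What each buys.} Your approach avoids both the lattice-isometry citation and the finite-index step. It uses only separability of $F'/F$, through surjectivity of the trace. It therefore applies uniformly in the generality of this section: $F'/F$ ramified, residue characteristic $2$, or $\mathrm{char}(F)=2$. The analytic inverse function theorem holds over any field complete for a nontrivial absolute value, so all these cases are covered. The paper's route, by contrast, rests on a lattice-theoretic approximation result whose Hermitian analogue it only asserts. What you give up is any explicit description of the neighborhood of $T$ contained in the orbit, but only its existence is used later, in Proposition \ref{prop:sufficiently small} and in the construction of the density polynomial.

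Your closing sketch of a hand-made Newton iteration with controlled denominators is therefore unnecessary. You can drop it rather than leave it as an unresolved ``main obstacle.''
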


\begin{proof}
For the first claim,  express $C$ as a finite union of  compact open subsets of the form $C_i=v_i+ \Lambda_i^d$,  where $v_i\in V^d$ and $\Lambda_i \subset V$ is an $\co_{F'}$-lattice.  In particular  $\Lambda_i^d  g = \Lambda_i^d$ for every $g\in K_0$.
Now fix any nonzero $m \in \co_{F'}$ such that $m v_i \in \Lambda_i^d$ for all indices $i$,  and take $K \subset K_0$ to be the principal congruence subgroup of matrices reducing to the identity modulo $m \co_{F'}$.  

For the second claim, any nonsingular $T \in \Herm_d(\co_{F'})$ endows $L_T=\co_{F'}^d$ with the structure of a Hermitian lattice.  
For all $T'$ in some compact open neighborhood of $T$, one can find an isometry $L_T \iso L_{T'} $  (for quadratic forms this is a special case  of \cite[Corollary 5.4.2]{Kitaoka}, and the Hermitian case is entirely similar), which implies that $T' \in  T \action K_0$.  
This proves that  every $K_0$-orbit of nonsingular matrices in $\Herm_d(\co_{F'})$ is open,  and the same is true for nonsingular orbits in $\Herm_d( F' )$  by scaling.  

Now let  $K \subset K_0$ be any compact open subgroup.  Given a nonsingular $T \in \Herm_d(F')$, we can write
\[
T \action K_0 = (T  \action K) \sqcup  (T_1 \action K)  \sqcup \cdots (T_\ell \action K)
\]
as a finite disjoint union of $K$-orbits.  Each of these $K$-orbits is compact, hence each is closed in $T \action K_0$, hence each is open in $T \action K_0$.  Having already proved that  $T \action K_0$ is open in $\Herm_d(F')$, the same is true of $T\action K$.
\end{proof}

\begin{proposition}\label{prop:sufficiently small}
If   $K \subset \GL_d(\co_{F'})$ is any compact subgroup  as in the first claim  of Lemma \ref{lem:open orbit lemma}, then  \eqref{general density} holds for any compact open neighborhood  $U \subset Y^\mathrm{ns}(F)$ of $T$ contained in the  orbit 
$
T\action K \subset Y^\mathrm{ns}(F).
$
(Such a $U$ exists by the second claim of   Lemma \ref{lem:open orbit lemma}.)
\end{proposition}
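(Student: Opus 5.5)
The approach is to show that $T'\mapsto \mu_{X_{T'}}(C\cap X_{T'}(F))$ is constant on the orbit $T\action K$. Once that holds, the argument in the proof of Proposition \ref{prop:natural density} applies to any compact open $U\subset T\action K$. Indeed, \eqref{good den 2} reads
\[
\mu_X(C\cap\tau^{-1}(U)) = \int_U \mu_{X_{T'}}(C\cap X_{T'}(F))\,d\mu_Y(T'),
\]
and if the integrand equals the constant $\mu_{X_T}(C\cap X_T(F))$ on $U$, the right side is $\mu_{X_T}(C\cap X_T(F))\cdot\mu_Y(U)$. Dividing by $\mu_X(C)$ and multiplying by $\mu_Y(\Herm_d(\co_{F'})^\vee)/\mu_Y(U)$ then gives \eqref{general density}.

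To get the constancy, take $g\in K$ and set $T'=T\action g$. Since $Cg=C$, Proposition \ref{prop:density invariance} gives
\[
\Den_{T'}(C)=\Den_{T\action g}(Cg)=\Den_T(C)\, q^{d\ord_F(\Delta)} .
\]
Here $\Delta=\mathrm{Nm}_{F'/F}(\det g)$, and $\det g\in\co_{F'}^\times$ because $g\in K\subset\GL_d(\co_{F'})$. So $\ord_F(\Delta)=0$ and $\Den_{T'}(C)=\Den_T(C)$. By Definition \ref{def:density}, $\Den_{T'}(C)$ differs from $\mu_{X_{T'}}(C\cap X_{T'}(F))$ only by the factor $\mu_Y(\Herm_d(\co_{F'})^\vee)/\mu_X(C)$, which does not depend on $T'$. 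Hence $\mu_{X_{T'}}(C\cap X_{T'}(F))$ is constant on $T\action K$, as required.

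The only point that needs care is that Proposition \ref{prop:density invariance} is a genuine identity for every $T$, with no smallness hypothesis on $U$. Its proof does pass through Proposition \ref{prop:natural density} with some sufficiently small $U$, but that $U$ may be chosen depending on $T$ and $g$, so there is no circularity. Since everything else is formal, I do not expect a serious obstacle. Nonemptiness and openness of $T\action K$ come from the second claim of Lemma \ref{lem:open orbit lemma}, and this orbit lies in $Y^{\mathrm{ns}}(F)$ because the $K$-action preserves nonsingularity.
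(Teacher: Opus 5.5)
Your proof is correct and follows the paper's argument. You use Proposition \ref{prop:density invariance} to get $\Den_{T'}(C)=\Den_T(C)$ on $T\action K$ (with $\ord_F(\Delta)=0$ because $\det g\in\co_{F'}^\times$), and then rerun the end of the proof of Proposition \ref{prop:natural density} with that $U$; you simply spell out details, including the non-circularity point, that the paper compresses into ``by examining the proof.''
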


\begin{proof}
For any $T' \in U$ we can write $T' = T \action g$ for some $g\in K$.
Using Proposition \ref{prop:density invariance},  we see   that 
\[
\Den_{T'}( C ) = \Den_{ T\action g} (Cg ) =  \Den_T(C)
\]
is independent of $T' \in U$.  
By examining the proof of Proposition \ref{prop:natural density}, we see that this is enough to guarantee the equality \eqref{general density}.
\end{proof}

%%%%%%%%%%%%%%%%%%%%%%%%%%%%%%%

\subsection{The density polynomial}
\label{ss:density polynomial}

%%%%%%%%%%%%%%%%%%%%%%%%%%%%%%%

For any integer $r\ge 0$, endow 
\[
W_{r,r} \define (F')^{2r}
\]
  with the Hermitian form determined by the antidiagonal matrix
\[
\begin{pmatrix}  & & 1  \\ & \iddots &  \\ 1 & &   \end{pmatrix}  \in \Herm_{2r}(F').
\]
In other words, $W_{r,r}$ is the orthogonal direct sum of $r$  hyperbolic planes.
Define a self-dual  $\co_{F'}$-lattice  
$
\Lambda_{r,r} = (\co_{F'})^{2r} \subset W_{r,r}.
$

Fix compact open subset $C \subset V^d$.  For any integer  $r \ge 0$, we form the orthogonal direct sum 
$
V^{ [r] } = V \oplus W_{r,r}, 
$
and define 
\[
C^{ [r] } = C \times \Lambda_{r,r}^d \subset V^d \times W_{r,r}^d= (V^{[r]})^d.
\]

Recall that $q$ is the residue cardinality of $F$.

\begin{proposition}\label{prop:density polynomial}
 For any nonsingular $T \in \Herm_d(F')$,  there is a  polynomial $\Den_T(s,C) \in \C[q^{-s}]$  satisfying  
\[
\Den_T( r ,C) = \Den_T( C^{[r]} )
\]
 for all $r\in \Z_{\ge 0}$.   If $F'/F$ is unramified, then in fact  $\Den_T(s,C) \in \C[ q^{-2s}]$.
\end{proposition}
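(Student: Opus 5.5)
We have to show that $r\mapsto \Den_T(C^{[r]})$ is given by a polynomial in $q^{-r}$, and by a polynomial in $q^{-2r}$ when $F'/F$ is unramified. The plan is to reduce to the counting description of Proposition \ref{prop:natural density} and then apply the classical Gauss-sum computation.

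\textbf{Step 1: finite counting formula.} Choose the compact open subgroup $K\subset\GL_d(\co_{F'})$ as in Lemma \ref{lem:open orbit lemma} for $C$. Since $\Lambda_{r,r}^d$ is stable under all of $\GL_d(\co_{F'})$, the same $K$ works for every $C^{[r]}$. By Proposition \ref{prop:sufficiently small}, a single neighborhood $U=T+\pi^k\Herm_d(\co_{F'})^\vee\subset T\action K$ works for all $r$ simultaneously. Then
\[
\Den_T(C^{[r]}) = \frac{\Vol\{(x,y)\in C\times\Lambda_{r,r}^d : \tau(x)+\tau(y)\in U\}}{\Vol(C)\Vol(\Lambda_{r,r}^d)}\cdot\frac{\Vol(\Herm_d(\co_{F'})^\vee)}{\Vol(U)}.
\]

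\textbf{Step 2: Fourier expansion.} Write $\mathbf 1_U(S)$ as an average over $\beta\in \Herm_d(\co_{F'})/\pi^{-k}\Herm_d(\co_{F'})$, or more generally over $\beta$ in a suitable finite quotient dual to $U$, of $\psi(\mathrm{Tr}(\beta(S-T)))$ for a fixed unramified character $\psi$. Interchanging the finite sum with the integrals gives
\[
\Den_T(C^{[r]})=\sum_{\beta} c_\beta(C,T)\,G_\beta^{\,r},
\]
where $c_\beta$ is independent of $r$, and
\[
G_\beta=\frac{1}{\Vol(\Lambda_{1,1}^d)}\int_{\Lambda_{1,1}^d}\psi(\mathrm{Tr}(\beta\,\tau(y)))\,dy .
\]
Here we use that $\Lambda_{r,r}=\Lambda_{1,1}^{\oplus r}$ orthogonally, so $\tau$ is additive and the integral over $\Lambda_{r,r}^d$ factors into $r$ copies.

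\textbf{Step 3: compute $G_\beta$.} For the hyperbolic plane, $\tau(y)=A\,{}^t\!B^\sigma+B\,{}^t\!A^\sigma$ with $A,B\in\co_{F'}^d$. Integrating first over $B$ yields the indicator that $\beta$ applied to $A$ lands in the dual lattice. Hence $G_\beta$ equals the measure of $\{A\in\co_{F'}^d:\beta A\in\co_{F'}^d\}$ (taking conjugations into account), which is $q^{-[F':F]\cdot\ell(\beta)}$ for an integer $\ell(\beta)\ge0$ determined by the elementary divisors of $\beta$. More precisely, $\ell(\beta)$ is the sum over the elementary divisors $\varpi^{-a_i}$ with $a_i>0$ of $a_i$, measured with the uniformizer of $F'$. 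Thus $G_\beta=q^{-m(\beta)}$ with $m(\beta)\in\Z_{\ge0}$, and $G_\beta^r=(q^{-r})^{m(\beta)}$. So $\Den_T(s,C)=\sum_\beta c_\beta\, q^{-s\,m(\beta)}$ is the desired polynomial. It is uniquely determined, since it is pinned down at infinitely many values of $q^{-s}$.

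\textbf{Step 4: the unramified case.} When $F'/F$ is unramified, $\co_{F'}$ has residue field of size $q^2$. Each elementary divisor contributes a power of $q^2$, so every $m(\beta)$ is even and the polynomial lies in $\C[q^{-2s}]$.

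\textbf{Main obstacle.} The delicate part is Step 3 in the ramified and residue characteristic $2$ cases. There the dual lattice $\Herm_d(\co_{F'})^\vee$ and the trace pairing complicate the identification of the $B$-integral with a lattice indicator. I would handle this by working directly with the pairing $\mathrm{Tr}_{F'/F}$ and the different, and checking only that the result is a nonnegative integer power of $q^{-1}$, which is all that is needed.
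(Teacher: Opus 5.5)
Your proposal is correct and follows the paper's proof essentially step for step. A single $K$, and hence a single neighborhood $U$, works for all $C^{[r]}$; Fourier expansion of $\mathbf{1}_U$ factors the integral as $f_S(C)\, f_S(\Lambda_{1,1}^d)^r$; and integrating out one coordinate of the hyperbolic plane exhibits $f_S(\Lambda_{1,1}^d)$ as the volume of an $\co_{F'}$-submodule of $\co_{F'}^d$, exactly as in Lemma \ref{lem:secret gauss}.

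There are two harmless slips. First, that volume is a nonpositive power of the residue cardinality of $F'$, so it is $q^{-\ell}$ rather than $q^{-2\ell}$ when $F'/F$ is ramified. Second, the finite quotient in Step 2 should be $\pi^{-k}\Herm_d(\co_{F'})/\pi^{j}\Herm_d(\co_{F'})$ for some $j \gg 0$; this choice is uniform in $r$ because $\tau(\Lambda_{r,r}^d)\subset \Herm_d(\co_{F'})$.
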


\begin{proof}
%Let $\pi$ be a uniformizer of $F$.
Fix  an unramified  additive character $\psi : F \to \C^\times$,  define a pairing
\[
\langle - , - \rangle_\psi :  \Herm_d(F') \times  \Herm_d(F') \to \C^\times
\]
by $\langle S,T\rangle_\psi  = \psi(  \mathrm{Tr}(ST))$, and  for  $S \in \Herm_d(F')$ set
\begin{equation}\label{f_S}
f _S(C)  = \frac{1}{\Vol(C)}   \int_C   \langle S ,    \tau(x)    \rangle_\psi   \, dx.
\end{equation}
It is easy to see that \eqref{f_S} is locally constant (but  not compactly supported) as a function of $S$.

The definition of $f_S(C)$  makes sense if we replace $C \subset V^d$ by any compact open subset in the $d$-fold product of any $F'$-Hermitian space, and it is multiplicative with respect to this data:  given $C_1 \subset V_1^d$ and $C_2\subset V_2^d$,  the product $C_1 \times C_2 \subset (V_1 \oplus V_2)^d$ satisfies 
\[
f_S(C_1 \times C_2) = f_S(C_1) f_S(C_2).
\]
In particular
\begin{equation}\label{fourier factor}
f_S(C^{[r]}) = f_S(C) \cdot f_S( \Lambda^d_{r,r} )  
=   f_S(C) \cdot f_S( \Lambda^d_{1,1} )^r .
\end{equation}

\begin{lemma}\label{lem:secret gauss}
For any  $S \in \Herm_d(F')$,  the integral
\[
f_S( \Lambda^d_{1,1} )  =  \frac{1}{ \Vol( \Lambda_{1,1}^d ) } 
\int_{ \Lambda_{1,1}^d}  \langle S ,  \tau(x) \rangle_\psi   \, dx   
\]
  is a nonpositive  power of the residue cardinality  of $F'$.
\end{lemma}
 
\begin{proof}
Write elements of $\Lambda_{1,1} = \co_{F'} \times \co_{F'}$ as pairs $(y,z)$, so that the Hermitian form on it becomes 
\[
h( (y_1,z_1) ,  (y_2,z_2) ) = y_1 \sigma(z_2) + z_1  \sigma(y_2).
\]
The moment map
\[
\co_{F'}^d \times \co_{F'}^d = \Lambda_{1,1}^d \map{\tau} \Herm_d(\co_{F'}) 
\] 
now takes the form
\[
\tau( y, z) =  \big( y_i \sigma(z_j) + z_i  \sigma(y_j)   \big)_{ i,j}  .
\]
The crucial observation is that this moment map is $\co_F$-bilinear, and satisfies
$
\tau ( \alpha y ,z) = \tau( y , \sigma(\alpha) z) 
$
for all $\alpha \in \co_{F'}$.

Normalizing the Haar measure on $\co_{F'}$ to have volume $1$, it follows easily that the inner integral in
\[
f_S( \Lambda^d_{1,1} ) 
=\int_{ \co_{F'}^d  }    \int_{ \co_{F'}^d  }   \langle S ,  \tau(y,z ) \rangle_\psi    \, dy\, dz .
\]
is  the characteristic function of some  $\co_{F'}$-submodule of $\co_{F'}^d$, completing the proof.
\end{proof}

Abbreviate $U_k = T + \pi^k \Herm_d(\co_{F'})$, so that 
\[
\frac{ \Vol(  \Herm_d( \co_{F'} )^\vee  ) } { \Vol(U_k)  } 
=  \frac{ \Vol(  \pi^{-k}  \Herm_d(\co_{F'} )^\vee   ) }{  \Vol( \Herm_d(\co_{F'}) )  } 
\]
for all $k\ge 0$.
For any choice of Haar measure on $V^d$,  the equality 
\begin{align*}
 &  \Vol ( \{ x\in  C  : \tau(x)   \in U_k  \}  )  \\
&   = 
  \int_C    \left(  \frac{1}{  \Vol( \pi^{-k}  \Herm_d(\co_{F'} )^\vee )  }  \int_{\pi^{-k} \Herm_d(\co_{F'})^\vee }   \langle S ,    \tau(x) -T  \rangle_\psi   \, dS  \right)  \, dx 
\end{align*}
holds because the quantity in parentheses is $1$ if $\tau(x)\in U_k$, and is $0$ otherwise.
Rewriting this last equality as
\begin{align*}
&   \Vol ( \{ x\in  C  : \tau(x)   \in U_k  \}  )   \\
 &  =
 \frac{ \Vol(C) }{  \Vol( \pi^{-k}  \Herm_d(\co_{F'} )^\vee )  }  \int_{\pi^{-k} \Herm_d(\co_{F'})^\vee } 
    f_S(C)   \cdot   \langle S ,     -T   \rangle_\psi         \, dS ,
\end{align*}
and comparing  with  \eqref{general density},  shows  that for all $k \gg 0$ we have \begin{align}\label{analytic density}
\Den_T(C )=
\frac{1}{  \Vol ( \Herm_d(\co_{F'}) )  } 
  \int_{\pi^{-k} \Herm_d(\co_{F'})^\vee }      f_S(C) \cdot    \langle S ,     -T  \rangle_\psi     \, dS .
\end{align}
To ease notation, we now normalize the Haar measure on $\Herm_d(F')$ to give $\Herm_d(\co_{F'} )$ volume $1$.

The proof of   \eqref{analytic density} works equally well if we replace   $C\subset V^d$  by  $C^{[r]} \subset (V^{[r]})^d$, and so it follows from \eqref{fourier factor} that
\begin{align}\label{r analytic density}
 \Den_T(C^{[r]} ) 
 =
  \int_{\pi^{-k} \Herm_d(\co_{F'})^\vee }  f_S(C) \cdot f_S( \Lambda^d_{1,1} )^r \cdot    \langle S ,     -T  \rangle_\psi     \, dS 
\end{align}
for all sufficiently large $k$.
A priori the notion of sufficiently large could depend on $r\in \Z_{\ge 0}$.
However, we may use Lemma \ref{lem:open orbit lemma} to  fix a  compact open subgroup  $K \subset \GL_d(\co_{F'})$  stabilizing $C$, and then choose $k$  large enough that    $U_k$ is contained in the $K$-orbit of $T$.
 Proposition \ref{prop:sufficiently small} then tells us that \eqref{general density} holds with $U=U_k$, and so  \eqref{analytic density} holds for this  choice of $k$.  
This particular   subgroup $K$ will also stabilize every $C^{[r]}$, and so by the same reasoning this $k$ is large enough that  \eqref{r analytic density}  holds for all  $r$ simultaneously.

 Now  consider the behavior of   \eqref{r analytic density} as a function of $r$.
 Because the integrand is locally constant as a function of $S$, and because we are integrating over a compact set, we may rewrite \eqref{r analytic density} as a finite linear combination  
\[
 \Den_T(C^{ [r] } ) = \sum_i c_i  \cdot f_{S_i}( \Lambda^d_{1,1} )^r 
\]
 for matrices $S_i \in \Herm_d(F')$ and scalars $c_i \in \C$, all independent of $r$.
Proposition \ref{prop:density polynomial} is immediate from this and Lemma \ref{lem:secret gauss}.
  \end{proof}

\begin{remark}
The density polynomial  $\Den_T(s, C)$ takes rational values whenever $s\in \Z_{\ge 0}$, and so    $\Den_T(s,C) \in \Q [ q^{-s}]$. 
\end{remark}

\begin{remark}
The construction $C \mapsto C^{[r]}$ respects disjoint unions, and so our density polynomials satisfy the obvious analogue of Remark \ref{rem:general disjoint}.
\end{remark}

%%%%%%%%%%%%%%%%%%%%%%%%%%%

\subsection{Whittaker functions}
\label{ss:whittaker}

%%%%%%%%%%%%%%%%%%%%%%%%%%%%

We quickly recall the construction of local Whittaker functions, and explain the connection with the density polynomials we have constructed.   In this subsection we assume that $\mathrm{char}(F) \neq 2$.

Fix an integer $d\ge 1$.  Abbreviating 
\[
w=\begin{pmatrix} & I_d \\  - I_d & \end{pmatrix} ,
\]  
we denote by 
\[
H_d = \left\{ g \in \GL_{2d}(F') : {}^tg  \cdot w \cdot    \sigma(g)  = w  \right\} ,
\]
the unitary group (viewed as a $p$-adic group, not an algebraic group over $F$) of the standard skew-Hermitian space of dimension $2d$. 
The \emph{standard Siegel parabolic} $P_d \subset H_d$ is defined as the stabilizer of the Lagrangian subspace $(F')^d \times \{0\} \subset (F')^{2d}$.

As in \cite[\S 12.1]{LZ1},  a character $\chi$ of $(F')^\times$ determines a  \emph{degenerate principal series} 
\[
I_d(s,\chi) = \mathrm{Ind}_{P_d}^{H_d} \big( \chi  | \cdot |_F^{ s + \frac{d}{2}} \big) 
\]
(unnormalized smooth induction).
Exactly as in \cite[\S 12.2]{LZ1},  to a standard section $\Phi( \cdot ,s) \in I_d(s,\chi)$ and a nonsingular $T \in \Herm_d(F')$, one associates a \emph{local Whittaker function} 
\[
W_T(s,  \Phi ) \in \C [q^s, q^{-s}]
\]
 of the variable $s\in \C$.   
Although we suppress it from the notation, the local Whittaker function  depends on a choice of additive character $\psi : F \to \C^\times$, which we now fix.

Let $\eta   : F^\times \to \{ \pm 1\}$ be the quadratic character determined by $F'/F$.
Fix  a finite dimensional Hermitian space $(V,h)$ for $F'/F$, and assume that  the character $\chi$ fixed above satisfies
\[
\chi|_{F^\times}= \eta^{\dim_{F'}(V)}.
\]
As recalled  in \cite[\S 12.3]{LZ1},  the choices of $\psi$ and $\chi$ determine a Weil representation of $H_d$ on the Schwartz space $S(V^d)$.  Using this one  associates to any $\varphi \in S(V^d)$ a \emph{standard Siegel-Weil section}
\[
\Phi_\varphi(\cdot ,s) \in I_d(s,\chi).
\]
By mild abuse of notation, we denote by $W_T(s,\varphi) = W_T(s,\Phi_\varphi)$ the associated local Whittaker function.

In the following proposition, we assume that the additive character $\psi$ is unramified.
Take the  Haar measure on $\Herm_d(F')$ to be  self-dual with respect to the pairing $(a,b) \mapsto \psi( \mathrm{Tr} ( ab) )$, take the Haar measure on $V$ to be self-dual with respect to  $(x,y)\mapsto \psi ( \mathrm{Tr}_{F'/F} h(x,y) )$, and  understand the volume of $C$ to be taken with respect to the product measure on $V^d$.

\begin{proposition}\label{prop:whittaker}
Let   $\bm{1}_C\in S(V^d)$ be the characteristic function of a compact open subset $C \subset V^d$.
For any nonsingular $T \in \Herm_d(F')$ we have 
\[
W_T (s, \bm{1}_C )
= \gamma (V)^d \cdot     \Vol(  C)   \cdot  \mathrm{Vol}(  \Herm_d(\co_{F'}) ) \cdot   \Den_T\big( s, C \big)  .
\]
Here  the Weil constant $\gamma(V)$ appearing on the right-hand side  is the eighth root of unity defined by \cite[(10.3)]{KR2}. 
\end{proposition}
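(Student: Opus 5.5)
The plan is to compute $W_T(r,\bm{1}_C)$ at every integer $r\ge 0$ by reducing to $s=0$ for the enlarged space $V^{[r]}$. I would then compare the resulting integral with the Fourier-analytic expression \eqref{r analytic density} from the proof of Proposition \ref{prop:density polynomial}, and conclude by polynomial interpolation.

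\emph{Step 1 (the case $s=0$).} Recall the formula
\[
W_T(s,\Phi)=\int_{\Herm_d(F')}\Phi\big(w\, n(b),s\big)\,\psi(-\mathrm{Tr}(bT))\,db .
\]
Here $n(b)$ is the unipotent element of $P_d$. For $s$ in the range where the integral diverges, one interprets it by analytic continuation, or equivalently as the limit of the integrals over the exhausting compact sets $\pi^{-k}\Herm_d(\co_{F'})^\vee$. For the Siegel--Weil section we have $\Phi_\varphi(g,0)=(\omega(g)\varphi)(0)$. The Weil representation formulas give $\omega(n(b))\varphi(x)=\psi(\mathrm{Tr}(b\,\tau(x)))\varphi(x)$, and $\omega(w)$ is $\gamma(V)^d$ times the Fourier transform with respect to the self-dual measure on $V^d$. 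Hence
\[
\Phi_{\bm 1_C}(w\,n(b),0)=\gamma(V)^d\int_C\psi(\mathrm{Tr}(b\,\tau(x)))\,dx=\gamma(V)^d\,\Vol(C)\,f_b(C),
\]
with $f_S(C)$ as in \eqref{f_S}. One must also track the sign convention relating $w$ and $w^{-1}$, which only replaces $b$ by $-b$. Substituting gives a truncated integral $\gamma(V)^d\Vol(C)\int_{\pi^{-k}\Herm_d(\co_{F'})^\vee}f_S(C)\langle S,-T\rangle_\psi\,dS$. By \eqref{analytic density}, for $k\gg0$ this integral equals $\Vol(\Herm_d(\co_{F'}))\cdot\Den_T(C)$ for any Haar measure; here I use the self-dual one. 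Since the integrand's contributions stabilize once $k$ is large (as shown in the proof of Proposition \ref{prop:density polynomial}), the regularized Whittaker integral equals this stable value. This yields the claimed identity at $s=0$.

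\emph{Step 2 (interpolation to $s=r$).} Because $\eta^{2r}=1$, the same $\chi$ is admissible for $V^{[r]}=V\oplus W_{r,r}$. The standard interpolation identity is
\[
\Phi_{\varphi}(g,r)=\Phi_{\varphi\otimes\bm 1_{\Lambda_{r,r}^d}}(g,0).
\]
This holds because $\omega(g)\bm 1_{\Lambda_{r,r}^d}$ evaluated at $0$ is $|a(g)|^{r}$ in the Iwasawa decomposition, using that $\Lambda_{r,r}$ is self-dual and $\psi$ is unramified. We also have $\gamma(W_{r,r})=1$, and $\Vol(\Lambda_{r,r}^d)=1$ for the self-dual measure. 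Applying Step 1 to $C^{[r]}\subset (V^{[r]})^d$ then gives
\[
W_T(r,\bm 1_C)=\gamma(V)^d\Vol(C)\Vol(\Herm_d(\co_{F'}))\Den_T(C^{[r]}),
\]
and the last factor is $\Den_T(r,C)$ by Proposition \ref{prop:density polynomial}.

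\emph{Step 3 (conclusion).} Both sides are Laurent polynomials in $q^{-s}$, and they agree at $q^{-s}=q^{-r}$ for all $r\in\Z_{\ge0}$. These are infinitely many distinct points, so the two sides coincide identically.

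The main obstacle is Step 1's justification that the (formally divergent) Whittaker integral agrees with the truncated integral for large $k$, uniformly enough to match \eqref{analytic density}. I would handle this with the same stabilization argument used for \eqref{r analytic density}: choose $K$ stabilizing $C$ and take $k$ large enough that $U_k\subset T\action K$. Alongside this, the Weil-representation bookkeeping in Step 2 (the character $\chi$, the Weil constant $\gamma$, and the measure normalizations) must be carefully matched to \cite{LZ1,KR2}.
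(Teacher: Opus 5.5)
Your proposal is correct and follows essentially the same route as the paper: Rallis's interpolation trick reduces the claim to $s=r\in\Z_{\ge 0}$ with $C^{[r]}$ in place of $C$. The Whittaker integral is then computed as a limit of truncated integrals over $\pi^{-k}$-scaled lattices in $\Herm_d(F')$, Fourier inversion turns these into the volume quotient of Proposition \ref{prop:natural density}, and the polynomial identity in $q^{-s}$ finishes the argument. The only differences are cosmetic: you first treat $s=0$ and then apply that case to $V^{[r]}$, and you reuse \eqref{analytic density} instead of redoing the Fourier inversion with $U_k=T+\pi^k\Herm_d(\co_{F'})^\vee$ as the paper does.
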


\begin{proof}
The proof is  the same as that of \cite[Proposition 10.1]{KR2}, which treats the case in which $C=M^d$ for a lattice $M \subset V$ on which the Hermitian form is $\co_{F'}$-valued.

Briefly, because both sides of the desired equality lie in $\C[ q^s,q^{-s} ]$, it suffices to prove the equality when $s=r \in \Z_{\ge 0}$.   By the interpolation trick of Rallis, we have
\[
W_T(r,\bm{1}_C) = 
 \gamma(V)^d  
\int_{\Herm_d(F') }  \psi( -\mathrm{Tr}(bT) )
\left(   \int_{  C^{[r]}   }   \psi ( \mathrm{Tr}( b \tau(x) )  )    \, dx   \right)   \, db,
\]
where  the Haar measure on  $( V^{[r]})^d$ is normalized in the same way as that  on $V^d$.
Fixing a uniformizer $\pi \in \co_F$, we may  compute the outer integral as the limit as $k\to \infty$ of the integrals over $ \pi^{-k} \Herm_d(\co_{F'})$.  In this way we see that 
\begin{align*}
W_T(r, \bm{1}_C ) &=  \gamma(V)^d  \lim_{k\to \infty} 
 \int_{  C^{[r]}   }
\left(    \int_{\pi^{-k} \Herm_d(\co_{F'}) }  \psi ( \mathrm{Tr}( b ( \tau(x) -T) )  )    \, db   \right)   \, dx  \\
& =
\gamma(V)^d  \lim_{k\to \infty} 
 \int_{  C^{[r]}    }
 \mathbf{1}_{ U_k} (\tau (x) ) 
\left(    \int_{\pi^{-k} \Herm_d(\co_{F'})}  1   \, db   \right)   \, dx  ,
\end{align*}
where in the final line we have set   $U_k = T + \pi^k \Herm_d(\co_{F'} )^\vee$.
The inner integral is 
\[
 \int_{\pi^{-k} \Herm_d(\co_{F'})}  1   \, db 
 = 
  \mathrm{Vol}(  \Herm_d(\co_{F'})   )  
 \frac{   \mathrm{Vol}(  \Herm_d(\co_{F'})^\vee   )    }{   \mathrm{Vol}( U_k  )   } , 
\]
and using this the claim follows from Proposition \ref{prop:natural density}.
\end{proof}

%%%%%%%%%%%%%%%%%%%%%%%%%%%%%%%

\section{Tools for computation}
\label{s:tools}

%%%%%%%%%%%%%%%%%%%%%%%%%%%%%%%

In this section we develop two technical tools for the explicit calculation of representation densities.
The first allows one to compute representation densities using arithmetic geometry, by counting points on the reductions of well-chosen $\co_F$-schemes.  The second is a reduction formula that allows one to compute the representation densities of block diagonal matrices in terms of representation densities of the diagonal blocks.  These are the  key methods needed for the calculations in \S \ref{s:density formulas}.

We continue to work with a finite dimensional vector space $V$ over $F'$, endowed with a nondegenerate Hermitian form $h$.
Fix an integer $d\ge 1$.

%%%%%%%%%%%%%%%%%%%%%%%%%%%%%%%

\subsection{Integral models}

%%%%%%%%%%%%%%%%%%%%%%%%%%%%%%%

We now explain how representation densities can be computed using arithmetic geometry, generalizing the ideas used in \cite{GY} and \cite{CY} for the classical representation densities of Remark \ref{rem:classical density}.

Suppose we are given smooth $\co_F$-schemes $\mathcal{X}$ and $\mathcal{Y}$ whose generic fibers are the $F$-schemes $X$ and $Y$ defined by \eqref{XandY}.  In particular, such integral models determine compact open subsets
\[
\mathcal{X}(\co_F) \subset X(F)=V^d \qquad \mbox{and} \qquad \mathcal{Y}(\co_F) \subset  Y(F)=\Herm_d(F').
\]
Assume  that the moment morphism \eqref{moment morphism} extends (necessarily uniquely) to a morphism
\[
\tau : \mathcal{X}   \to \mathcal{Y},
\]
and that  the  translation-invariant forms  $\mu_{X}$ and $\mu_{Y}$ from \eqref{mewtwo} can be extended  (necessarily uniquely) to nowhere vanishing  forms
\[
\mu_{\mathcal{X}}  \in H^0(  \mathcal{X}  , \Omega_{\mathcal{X}}^{\dim(X)} ) 
\qquad\mbox{and}\qquad
\mu_{\mathcal{Y}}  \in H^0( \mathcal{Y} , \Omega_{\mathcal{Y}}^{\dim(Y)} ) .
\]

Let $\mathcal{U} \subset \mathcal{X}$ be a Zariski open subset on which the morphism $\tau$ is smooth, 
and for any nonsingular Hermitian matrix   $T\in \mathcal{Y}(\co_F)$, define a smooth  $\co_F$-scheme $\mathcal{U}_T$ by the cartesian diagram
\begin{equation}\label{integral cartesian}
\begin{tikzcd}
{ \mathcal{U}_T }\ar[r] \ar[d]  & { \Spec(\co_F) } \ar[d , "T" ]  \\
{ \mathcal{U} } \ar[r , "\tau" '] & { \mathcal{Y} } .
\end{tikzcd}
\end{equation}
In particular,  $\mathcal{U}_{T} (\co_F) \subset X_T(F)$ is a compact open subset of \eqref{X_T points}.

The following result shows that  computing representation densities can be reduced to the problem of counting points in the special fibers of smooth integral models.

\begin{proposition}\label{prop:integral densities}
If $C\subset X(F)$ is a compact open subset satisfying
\[
C \cap X_T(F)  = \mathcal{U}_T(\co_F),
\]
 then the representation density of $T$ by $C$ is 
\[
\Den_T(C) 
 =  \#  \mathcal{U}_T( k )  
\cdot
\frac{ \# \mathcal{Y}( k )   }{  \# \mathcal{X}(k)   }
\cdot \frac{   \Vol( \mathcal{X}(\co_F) )   }{    \Vol(C)  }
\cdot \frac{ \Vol(  \Herm_d(\co_{F'})^\vee  ) }{ \Vol(  \mathcal{Y}(\co_F)  ) } .
\]
Here $k=\co_F/\pi\co_F$ is the residue field of $\co_F$.
\end{proposition}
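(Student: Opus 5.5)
The plan is to start from Definition \ref{def:density}:
\[
\Den_T(C) = \mu_{X_T}(C\cap X_T(F)) \cdot \frac{\mu_Y(\Herm_d(\co_{F'})^\vee)}{\mu_X(C)},
\]
and compute the only nontrivial factor $\mu_{X_T}(\mathcal{U}_T(\co_F))$ via the Weil-type point count. The key input is the standard fact (Weil; see \cite[Theorem 2.2.5]{Igusa} or the arguments of \cite{GY,CY}): if $\mathcal{Z}$ is a smooth $\co_F$-scheme of relative dimension $m$ and $\omega$ is a nowhere vanishing top-degree form on $\mathcal{Z}$, then
\[
\int_{\mathcal{Z}(\co_F)}|\omega| = \frac{\#\mathcal{Z}(k)}{q^{m}}.
\]
The reason is Hensel's lemma: the reduction map $\mathcal{Z}(\co_F)\to\mathcal{Z}(k)$ is surjective. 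Each fiber is a residue disc that $\omega$ identifies, through an étale chart, with $(\pi\co_F)^m$ carrying the measure of a unit form, so each fiber has volume $q^{-m}$.

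We apply this fact three times. For $\mathcal{X}$ with $\mu_{\mathcal{X}}$ it gives $\mu_X(\mathcal{X}(\co_F)) = \#\mathcal{X}(k)\, q^{-\dim X}$. For $\mathcal{Y}$ with $\mu_{\mathcal{Y}}$ it gives $\mu_Y(\mathcal{Y}(\co_F)) = \#\mathcal{Y}(k)\, q^{-\dim Y}$. For $\mathcal{U}_T$ we need a nowhere vanishing integral top form extending $\mu_{X_T}$.

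This third application is the main step. Since $\tau|_{\mathcal{U}}$ is smooth, the exact sequence
\[
0\to\tau^*\Omega^1_{\mathcal{Y}}\to\Omega^1_{\mathcal{U}}\to\Omega^1_{\mathcal{U}/\mathcal{Y}}\to0
\]
of vector bundles over $\mathcal{U}$ exists integrally. Taking determinants shows that $\mu_{\mathcal{X}}|_{\mathcal{U}}/\tau^*\mu_{\mathcal{Y}}$ is a nowhere vanishing section of $\Omega^{\dim X-\dim Y}_{\mathcal{U}/\mathcal{Y}}$. Its pullback along the base change \eqref{integral cartesian} is therefore a nowhere vanishing top form on the smooth $\co_F$-scheme $\mathcal{U}_T$, which has relative dimension $\dim X-\dim Y$. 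On the generic fiber this form is $\mu_{X_T}$, because the construction in \eqref{mewT} is the same determinant construction and is compatible with base change. The only point to check is that nonvanishing of $\mu_{\mathcal{X}},\mu_{\mathcal{Y}}$ on the integral models gives nonvanishing of the quotient form on the special fiber as well. This holds because the determinant isomorphism is an isomorphism of line bundles over all of $\mathcal{U}$, not just over the generic fiber. Weil's formula then yields
\[
\mu_{X_T}(\mathcal{U}_T(\co_F)) = \#\mathcal{U}_T(k)\, q^{-(\dim X-\dim Y)}.
\]

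Combining the three counts gives
\[
\mu_{X_T}(\mathcal{U}_T(\co_F)) = \#\mathcal{U}_T(k)\cdot\frac{\#\mathcal{Y}(k)}{\#\mathcal{X}(k)}\cdot\frac{\mu_X(\mathcal{X}(\co_F))}{\mu_Y(\mathcal{Y}(\co_F))}.
\]
Substituting this into the definition with $C\cap X_T(F)=\mathcal{U}_T(\co_F)$, and regrouping the factor $\mu_Y(\Herm_d(\co_{F'})^\vee)/\mu_X(C)$, produces exactly the claimed formula. All the ratios are of Haar volumes, so they are independent of normalizations, consistent with the Remark following Definition \ref{def:density}.
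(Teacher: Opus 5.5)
Your proposal is correct and follows essentially the same route as the paper. You compute $\mu_X(\mathcal{X}(\co_F))$, $\mu_Y(\mathcal{Y}(\co_F))$ and $\mu_{X_T}(\mathcal{U}_T(\co_F))$ by the residue-disc (Weil) point count for nowhere vanishing forms on smooth integral models, obtaining the form on $\mathcal{U}_T$ by repeating the quotient construction integrally, and then substitute these counts into Definition \ref{def:density}.
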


\begin{proof}
Fix a point $x\in \mathcal{X}(k)$, and let  $V_x \subset \mathcal{X}(\co_F)$ be the set of points reducing to $x$ in the special fiber.
 As $\mathcal{X}$ is smooth over $\co_F$, the completed local ring at $x$ is isomorphic to a power series ring, say
\[
\widehat{\co}_{\mathcal{X},x}  \iso \co_F[[ t_1,\ldots, t_{\dim(X)}]] , 
\]
and  these coordinates determine an  isomorphism 
$
V_x \iso  ( \pi \co_F)^{\dim (X) } 
$
of $F$-analytic manifolds.  Under this isomorphism, 
$
\mu_X = f \cdot   dt_1 \wedge \cdots  \wedge d t_{\dim(X)} 
$
for some $f$ in the power series ring, and the assumption that $\mu_X$ admits a nowhere vanishing extension  to $\mathcal{X}$ implies that $f$ is a unit.
In particular, $| f |_F =1$ as analytic functions on $( \pi \co_F)^{\dim (X) }$.
Recalling the definition of $F$-analytic integration from \cite[Chapter 7.4]{Igusa}, we have
\[
\mu_X (V_x) = \int_{  ( \pi \co_F)^{\dim (X) }  }   | f |_F \cdot dt_1 \wedge \cdots  \wedge d t_{\dim(X)}  =
q^{ - \dim(X) }.
\]

By varying the point $x$, we find that 
\begin{equation}\label{integral volume 1}
\mu_X( \mathcal{X}(\co_F) )  = \# \mathcal{X}( k ) \cdot q^{ - \dim(X) }. 
\end{equation}
Similar reasoning shows that 
\begin{equation}\label{integral volume 2}
\mu_Y  ( \mathcal{Y}(\co_F) )  = \# \mathcal{Y}(k) \cdot q^{ - \dim(Y) }.
\end{equation}

Using the assumption that $\tau : \mathcal{U} \to \mathcal{Y}$ is smooth, 
the restriction to $\mathcal{U}_{T/F} \subset X_T$ of the top-degree form $\mu_{X_T}$ of  \eqref{mewT}  extends (by repeating the construction of \eqref{mewT} on the level of integral models) to a nowhere vanishing form on the smooth $\co_F$-scheme  $\mathcal{U}_T$.  
With this in hand,  the same reasoning as above proves
\begin{equation}\label{integral volume 3}
\mu_{X_T} ( \mathcal{U}_T(\co_F) ) =  \#\mathcal{U}_T(k ) \cdot q^{ - \dim( X_T)} .
\end{equation}

The claim follows by rewriting Definition \ref{def:density} as
\begin{align*}
\Den_T(C) 
%& =   \mu_{ X_T  }(    \mathcal{U}_T( \co_F)  ) 
%\cdot \frac{ \mu_Y(  \Herm_d(\co_{F'})^\vee   ) }{ \mu_X(C) } \\
& =   \mu_{ X_T  }(    \mathcal{U}_T( \co_F)  ) 
\cdot \frac{ \mu_Y( \mathcal{Y}(\co_F)  ) }{ \mu_X( \mathcal{X}(\co_F)) }  
\cdot \frac{   \Vol( \mathcal{X}(\co_F) )   }{    \Vol(C)  }
\cdot \frac{ \Vol(  \Herm_d(\co_{F'})^\vee  ) }{ \Vol(  \mathcal{Y}(\co_F)  ) }
\end{align*}
and using the  formulas \eqref{integral volume 1}, \eqref{integral volume 2}, and \eqref{integral volume 3}.
\end{proof}

%%%%%%%%%%%%%%%%%%%%%%%%%%%%%%%

\subsection{A reduction formula}
%\label{ss:reduction}

%%%%%%%%%%%%%%%%%%%%%%%%%%%%%%%

Fix a decomposition  $d= d_1 + d_2$ with $d_1$ and $d_2$ positive integers.
Identifying  $V^d = V^{d_1} \times V^{d_2}$ in the obvious way, there is a corresponding decomposition
\begin{equation}\label{Xsplit}
X = X_1 \times X_2 ,
\end{equation}
where $X_i = V^{d_i}$ regarded as an  $F$-scheme as in  \eqref{XandY}. 

Fix  a block diagonal Hermitian matrix 
\[
T  = \begin{pmatrix}  T_1 & \\ & T_2  \end{pmatrix} \in \Herm_d (F')
\]
with each $T_i \in \Herm_{d_i}( F')$  nonsingular.
There  are corresponding closed subschemes 
\[
X_T \subset X, \qquad X_{T_1} \subset X_1,  \qquad X_{T_2} \subset X_2
\]
parametrizing tuples of vectors in $V$ with moment matrices $T$, $T_1$, and $T_2$, respectively,  exactly as in \eqref{X_T points}.

Given a compact open subset $C \subset X(F)$, we have defined the representation density $\Den_T(C)$ in terms of the  volume $\mu_{X_T}( C \cap X_T(F))$, which of course can be interpreted as an integral over $X_T(F)$.    
Projection to the second factor in $V^d = V^{d_1} \times V^{d_2}$ restricts to a morphism of smooth $F$-schemes 
\begin{equation}\label{fiber morphism}
\pi_2 : X_T \to X_{T_2} ,
\end{equation}
sending a $d$-tuple of vectors with moment matrix $T$ to its last $d_2$-components, which of course have moment matrix $T_2$.

There is a Fubini-style formula (as in Proposition \ref{prop:fubini})  expressing integrals on $X_T$ as iterated integrals, first along the fibers of \eqref{fiber morphism}, then along the base. 
Our goal in this subsection is to rewrite this  formula for fiberwise integration  in such a way that it expresses   $\Den_T(C)$ in terms of representation densities for the diagonal blocks $T_1$ and $T_2$. 
To do this, we  must first carefully specify the normalizations of the many different measures that will be involved.

 Using the factorization \eqref{Xsplit}, we identify   
 \[
 \Omega^{\dim(X)}_X \iso 
 \Omega^{\dim(X_1)}_{X_1} \boxtimes \Omega^{\dim(X_2)}_{X_2} 
 \]
 (the right-hand side means the tensor product as $\co_X$-modules of the pullbacks of  the two factors). 
As in \eqref{mewtwo},  choose nonzero translation-invariant top-degree forms $\mu_{X_1}$ and $\mu_{X_2}$ on $X_1$ and $X_2$.  
These determine a top-degree form 
\begin{equation*}
\mu_X=    \mu_{X_1}  \boxtimes   \mu_{X_2} \in H^0 (X , \Omega^{\dim(X)}_X ) .
\end{equation*}
The $F$-scheme $Y$ of $d\times d$ Hermitian matrices  decomposes as
\begin{equation}\label{Ysplit}
Y = Y_1 \times Y_2 \times M,
\end{equation}
where we have set 
\begin{align*}
Y_i   = \Herm_{d_i}(F')  \quad \mbox{and} \quad 
M   = \mathrm{Mat}_{d_1\times d_2}(F'),
\end{align*}
but regarded as $F$-schemes as in \eqref{XandY}.
The isomorphism is
\[
\begin{pmatrix}  T_1 & S \\ {}^t S^\sigma & T_2 \end{pmatrix} \mapsto (T_1, T_2, S) .
\]
As with  $\mu_X$, we fix a decomposition 
\begin{equation*}
\mu_Y = \mu_{Y_1}\boxtimes \mu_{Y_2} \boxtimes \mu_M \in H^0(Y , \Omega_Y^{\dim(Y)} ) 
\end{equation*}
of the translation-invariant form \eqref{mewtwo}.
Exactly as in \eqref{mewT}, the above choices determine  nonvanishing top-degree forms 
\[
\mu_{X_{T_i}} \in  H^0 \big( X_{T_i} , \Omega_{X_{T_i}}^{ \dim(X_{T_i} ) }   \big) ,
\]
and the induced measures on the $F$-analytic manifolds $X_{T_i}(F)$ are denoted in the same way.

For any $z = (z_1,\ldots, z_{d_2})  \in V^{d_2}$ with moment matrix $T_2$,  let $V(z) \subset V$ be the subspace spanned by the vectors $z_1,\ldots, z_{d_2}$, and let $V(z^\perp) \subset V$ be the subspace of vectors orthogonal to all  $z_1,\ldots, z_{d_2}$.  Because $T_2$ is nonsingular, $V(z)$ is a nondegenerate Hermitian space of dimension $d_2$, and there is an orthogonal decomposition 
\[
V = V(z^\perp) \oplus V( z ).
\]
This allows us to  identify
\[
V^{d_1} = V(z^\perp)^{d_1} \times V(z)^{d_1}.
\]
On the level of $F$-schemes, there is a corresponding   decomposition
\begin{equation}\label{z splitting}
X_1 = X_1(z^\perp) \times X_1(z),
\end{equation}
in which the first factor  parametrizes $d_1$-tuples of vectors in $V(z^\perp)$, and the second parametrizes $d_1$-tuples of vectors in $V(z)$.

Again using the assumption that $\tau(z) = T_2$ is nonsingular,  there is  a short exact sequence
\begin{equation}\label{h_z def}
0 \to V(z^\perp)^{d_1} \to V^{d_1} \map{ h_z }  M(F) \to 0 ,
\end{equation}
in which $h_z$ is the $F'$-linear map  sending a tuple $(y_1,\ldots, y_{d_1}) \in V^{d_1}$ to the $d_1 \times d_2$ matrix of Hermitian  pairings $h_z(y) = ( h(y_i,z_j))$.  This map   restricts to an $F'$-linear isomorphism 
\begin{equation}\label{h_z iso}
h_z:V(z)^{d_1} \iso  M(F) ,
\end{equation}
and so the chosen top-degree form $\mu_M$ on $M$ pulls back to a top-degree form  
\[
\mu_{X_1(z)} = h_z^* \mu_M
\]
 on $X_1(z)$.  We then normalize the translation-invariant  top-degree form $\mu_{X_1(z^\perp)} $ on $X_1(z^\perp)$ in such a way that,  under the decomposition \eqref{z splitting}, 
 \[
\mu_{X_1} = \mu_{X_1(z^\perp)}   \boxtimes \mu_{X_1(z)} .
\]
In other words, the measure  $\mu_{X_1(z^\perp)} $ on $V(z^\perp)^{d_1}$ is normalized so that 
\begin{equation}\label{X_1(z) quotient measure}
\mu_{X_1(z^\perp)}  (  A  )  = \frac{  \mu_{X_1}( A \times B)   }{  \mu_M( h_z(B)  )   }  
\end{equation}
for any nonempty compact open subsets $A \subset V(z^\perp)^{d_1}$ and $B \subset V(z)^{d_1}$.

\begin{theorem}\label{thm:reduction formula}
Fix a nonempty compact open subset $C \subset V^d$.  For  any $z \in V^{d_2}$ with moment matrix $T_2$,  define a (possibly empty) compact open subset
\begin{align*}
C_{z^\perp}  & = \{ y \in V(z^\perp)^{d_1} : (y,z) \in C \}  \subset X_1(z^\perp)(F). 
\end{align*}
Here $(y,z) \in V^d$ is the concatenation of the $d_1$-tuple $y\in V^{d_1}$ with the $d_2$-tuple $z\in V^{d_2}$.
The representation density of $T$ with respect to $C$ satisfies
\begin{align*}
 \Den_T(C)    & = 
 \frac{1}{ \mu_X(C) } \cdot 
 \frac{ \mu_Y( \Herm_d(\co_{F'})^\vee )  } {  \mu_{Y_1}(  \Herm_{d_1}(\co_{F'})^\vee  )  }   \\
 & \quad \times
 \int_{ X_{T_2}(F) } 
  \Den_{T_1}(C_{z^\perp}) \cdot \mu_{X_1 (z^\perp) } ( C_{z^\perp})  
 \,  d \mu_{X_{T_2}}  (z) .
\end{align*}
Inside the integral,  the representation density of $T_1$ with respect to $C_{z^\perp}$ is computed inside the subspace  $V(z^\perp) \subset V$, with its restricted Hermitian form. 
\end{theorem}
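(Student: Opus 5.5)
The plan is to argue through the volume characterization of Proposition \ref{prop:natural density}, not through differential forms directly. Take neighborhoods of $T$ of product shape $U = U_1 \times U_2 \times U_M$ under the decomposition \eqref{Ysplit}. Here $U_1 \ni T_1$ and $U_2\ni T_2$ are small compact open sets and $U_M \ni 0$ is a small compact open neighborhood in $M(F)$. The goal is to evaluate $\mu_X(C\cap\tau^{-1}(U))$ by integrating first over the last $d_2$ components and then over the first $d_1$.

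First, write $x=(y,z)$ with $y\in V^{d_1}$ and $z\in V^{d_2}$. Apply Fubini in $X=X_1\times X_2$, and apply Proposition \ref{prop:fubini} to $X_2\to Y_2$:
\[
\mu_X(C\cap\tau^{-1}(U)) = \int_{U_2}\int_{X_{T_2'}(F)} \mu_{X_1}\big(\{y : (y,z)\in C,\ \tau(y)\in U_1,\ h_z(y)\in U_M\}\big)\, d\mu_{X_{T_2'}}(z)\, d\mu_{Y_2}(T_2').
\]
Once $U_2$ is small, the inner integral is locally constant in $T_2'$; this follows from the $\GL$-invariance argument as in Proposition \ref{prop:sufficiently small}, applied to block-diagonal $g$. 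Hence the outer integral reduces to $\mu_{Y_2}(U_2)$ times the value at $T_2$.

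Second, for fixed $z$ with $\tau(z)=T_2$, decompose $y=y^\perp+y^z$ along \eqref{z splitting}. Then $h_z(y)=h_z(y^z)$, and $\tau(y)=\tau(y^\perp)+\tau(y^z)$. Let $U_M$ shrink. Using the isomorphism \eqref{h_z iso}, $y^z$ is confined to a small neighborhood of $0$, namely $h_z^{-1}(U_M)$. On that neighborhood $\tau(y^z)$ is tiny, so it can be absorbed into $U_1$ (choose $U_1$ a coset of a lattice with $\tau(h_z^{-1}(U_M))$ inside that lattice). Also $C$ is locally constant in $y^z$ at scale $h_z^{-1}(U_M)$, so $(y,z)\in C$ iff $(y^\perp,z)\in C$. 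The normalization \eqref{X_1(z) quotient measure} then makes the inner volume equal to
\[
\mu_M(U_M)\cdot \mu_{X_1(z^\perp)}\big(\{y^\perp\in C_{z^\perp} : \tau(y^\perp)\in U_1\}\big).
\]
Proposition \ref{prop:natural density}, applied inside $V(z^\perp)$, rewrites this as $\mu_M(U_M)\,\mu_{Y_1}(U_1)\,\Den_{T_1}(C_{z^\perp})\,\mu_{X_1(z^\perp)}(C_{z^\perp})/\mu_{Y_1}(\Herm_{d_1}(\co_{F'})^\vee)$. Substituting, using $\mu_Y(U)=\mu_{Y_1}(U_1)\mu_{Y_2}(U_2)\mu_M(U_M)$, and comparing with \eqref{general density} gives exactly the stated formula.

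The main obstacle is uniformity: all the smallness requirements must be met simultaneously for every $z$ in the compact set $\{z\in X_{T_2}(F) : C_{z^\perp}\ne\emptyset\}$, and the set $V(z^\perp)$ itself varies with $z$. I would handle this by compactness. The projection of $C$ to $V^{d_2}$ is compact, and every input (how small $U_1$ must be for $\Den_{T_1}(C_{z^\perp})$ via Proposition \ref{prop:sufficiently small}, the local constancy scale of $C$, and the size of $h_z^{-1}$) depends locally constantly or continuously on $z$. So finitely many choices suffice. To control the $U_1$ condition uniformly, the stabilizer subgroup $K$ from Lemma \ref{lem:open orbit lemma} should be chosen for $C$ and used for all $C_{z^\perp}$ at once. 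Alternatively, the whole argument can be phrased as a Fubini identity for the forms, applying Igusa's fiber integration to $\pi_2$ as in Proposition \ref{prop:fubini}, with the limit over shrinking $U$ taken only at the end.
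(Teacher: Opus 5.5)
Your argument is correct, but it takes a different route from the paper.

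\textbf{The paper's route.} The paper never passes through small neighborhoods of $T$. It proves an exact identity of top-degree forms on each fiber $X_{T_1}(z^\perp)$ of $\pi_2 : X_T \to X_{T_2}$. Namely, the fiber form $\mu_{X_T}/\pi_2^*\mu_{X_{T_2}}$ agrees, up to the sign $(-1)^{d_1d_2}$, with the form $\mu_{X_1(z^\perp)^{\mathrm{ns}}}/\tau_1^*\mu_{Y_1^{\mathrm{ns}}}$ that defines $\Den_{T_1}(C_{z^\perp})$; both are compared with the auxiliary form $\nu_z$. The paper then applies Igusa's fiber-integration theorem to $\pi_2$. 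This costs new smoothness results (for $\pi_2$ and for $\Phi$) and some sign bookkeeping. In return it gives a disintegration of $\mu_{X_T}$ fiber by fiber, with no limits and no uniformity in $z$ to manage.

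\textbf{Your route.} You use only the volume characterization (Propositions \ref{prop:natural density} and \ref{prop:sufficiently small}), together with Proposition \ref{prop:fubini} for the single map $\tau_2$. So you need no new smoothness statements and no comparison of forms. The price is the ordering of choices for $U_1$, $U_M$, $U_2$, which you correctly identify as the main issue. Your absorption of $\tau_1(y^z)$ into the coset $U_1$ is the measure-theoretic counterpart of the paper's observation that $F_z$ and $G_z$ agree to first order along $X_{T_1}(z^\perp)\times\{0\}$, because $\tau_1$ is quadratic.

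\textbf{Uniformity is easier than a compactness argument.}
\begin{itemize}
\item Set $K_1=\{g_1 : \mathrm{diag}(g_1,1)\in K\}$. It stabilizes every $C_{z^\perp}$ at once, so a single $U_1=T_1+\pi^j\Herm_{d_1}(\co_{F'})\subset T_1\action K_1$ works for all $z$ by Proposition \ref{prop:sufficiently small}.
\item Writing $h_z^{-1}(B)$ with components $\sum_j (BT_2^{-1})_{ij}z_j$, one finds $\tau_1(h_z^{-1}(B)) = B\,T_2^{-1}\,{}^tB^\sigma$. This is independent of $z$.
\item The tuple $h_z^{-1}(B)$ itself is uniformly small for $z$ in the compact projection of $C$ once $U_M=\pi^k\mathrm{Mat}_{d_1\times d_2}(\co_{F'})$ with $k\gg 0$.
\end{itemize}

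\textbf{One point to tighten.} Taking $U_M$ to be such a lattice is also what makes your block-diagonal invariance step legitimate. That step needs $U_M$ to be stable under right multiplication by $\sigma(g_2)$ for $\mathrm{diag}(1,g_2)\in K$. Alternatively, choose $U_2$ last, after $U_1$ and $U_M$, and use the local constancy supplied by Proposition \ref{prop:fubini} directly.
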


Theorem \ref{thm:reduction formula} generalizes known reduction formulas for  classical representation densities, such as \cite[Theorem 5.6.2]{Kitaoka}. 
Our proof, which  will occupy the remainder of this subsection,  is different but equally tedious.

Suppose we are given a point $z \in X_{T_2}(F)$.  In other words, 
a  tuple  $z  \in  V^{d_2}$ with moment matrix $T_2$.
Consider the   closed subscheme
\[
X_{T_1}( z^\perp)  =  X_{T_1} \cap X_1(z^\perp) \subset X_1
\]
of \eqref{z splitting},  parametrizing those    $ y \in V(z^\perp)^{d_1}$ with moment matrix $T_1$.
It admits  a closed immersion
 \begin{equation}\label{zinc}
 X_{T_1}( z^\perp)  \map{ y \mapsto (y,z) } X_T 
 \end{equation}
 sending $y$ to the concatenated tuple $(y,z) \in V^d$ with moment matrix $T$, and the image of \eqref{zinc}  is the fiber of \eqref{fiber morphism} above $z$.

Consider the restriction 
$
\tau_1 : X_1(z^\perp) \to Y_1 
$
of the  moment morphism $\tau_1 : X_1 \to Y_1$ to the closed subscheme $X_1(z^\perp) \subset X_1$.
As in \eqref{smooth moment}, it restricts to a morphism
\[
\tau_1 : X_1(z^\perp)^\mathrm{ns} \to Y_1^\mathrm{ns}, 
\]
where $Y_1^\mathrm{ns} \subset Y_1$ is the open subscheme parametrizing nonsingular Hermitian matrices, and $X_1(z^\perp)^\mathrm{ns} \subset   X_1(z^\perp)$ is its preimage under $\tau_1$.  
Denote by $\mu_{X_1(z^\perp)^\mathrm{ns}}$ and $\mu_{Y_1^\mathrm{ns}}$ the restrictions of the already defined forms $\mu_{X_1(z^\perp)}$ and $\mu_{Y_1}$ to these open subschemes.

\begin{lemma}\label{lem:fiber integral 1}
For any point $z \in X_{T_2}(F)$,  we  have a commutative diagram
 \begin{equation}\label{factor double fiber}
\begin{tikzcd}
{X_T} \ar[d , "\pi_2" ' ]  &  { X_{T_1}( z^\perp )  }   \ar[l , " \eqref{zinc} " '  ]  \ar[r] \ar[d] & { X_1(z^\perp)^\mathrm{ns} } \ar[d , " \tau_1" ]   \\
 {   X_{T_2} } &  { \Spec(F) }  \ar[l , "z "]   \ar[r , " T_1" ' ]   & {Y_1^\mathrm{ns} } 
\end{tikzcd}
\end{equation}
in which both squares are cartesian,  and the vertical morphisms  are smooth.  
\end{lemma}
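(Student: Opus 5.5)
The plan is to check the two cartesian squares at the level of functors of points, and then get smoothness of the vertical maps from Lemma \ref{lem:tangent moment} together with base change.

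For the left square, take an $F$-algebra $R$. An $R$-point of the fiber product $X_T\times_{X_{T_2},z}\Spec(F)$ is a tuple $(y,z)\in V_R^{d_1}\times V_R^{d_2}$ with moment matrix $T$ and second component equal to (the base change of) $z$. Write $\tau(y,z)=T$ in block form. The off-diagonal block says $h(y_i,z_j)=0$ for all $i,j$, which means $y\in V(z^\perp)_R^{d_1}$, using $V_R=V(z^\perp)_R\oplus V(z)_R$. The upper-left block says $\tau_1(y)=T_1$. So the fiber product is exactly $X_{T_1}(z^\perp)$, embedded via \eqref{zinc}. For the right square, an $R$-point of $X_1(z^\perp)^{\mathrm{ns}}\times_{Y_1^{\mathrm{ns}},T_1}\Spec(F)$ is $y\in V(z^\perp)_R^{d_1}$ with $\tau_1(y)=T_1$, which is again $X_{T_1}(z^\perp)$. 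This square is the analogue of the defining diagram of $X_{T}$, applied to the Hermitian space $V(z^\perp)$. Both diagrams commute by construction.

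For smoothness, the right vertical map $\tau_1 : X_1(z^\perp)^{\mathrm{ns}}\to Y_1^{\mathrm{ns}}$ is the moment morphism \eqref{smooth moment} for the nondegenerate Hermitian space $V(z^\perp)$. It is smooth by Lemma \ref{lem:tangent moment}: over an algebraically closed $\kappa$, a tuple with nonsingular moment matrix generates a free $\kappa'$-module of rank $d_1$, since nonsingularity of the Gram matrix forces linear independence of the $y_i$ over $\kappa'$ (a free submodule with a basis). The middle vertical map is then smooth by base change. Note that $X_{T_1}(z^\perp)$ is nonempty only if $d_1\le\dim V(z^\perp)$, but smoothness of an empty scheme is trivial.

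The left vertical map $\pi_2 : X_T\to X_{T_2}$ is the main point, since base change does not give it directly. I would check smoothness geometrically. Both $X_T$ and $X_{T_2}$ are smooth over $F$ by Lemma \ref{lem:tangent moment} and base change. By the fibral criterion, or by the criterion "smooth source and target, surjective tangent maps", it suffices to show that for $\kappa$ algebraically closed and $(y,z)\in X_T(\kappa)$ the map $T_{(y,z)}X_T\to T_zX_{T_2}$ is surjective. Tangent vectors to $X_T$ are pairs $(u,v)$ with $d\tau_{(y,z)}(u,v)=0$. Given $v$ with $d\tau_2(v)=0$, we must find $u$ with three properties: the upper-left block $h(y_i,u_j)+h(u_i,y_j)$ vanishes, and the off-diagonal block satisfies $h(u_i,z_j)+h(y_i,v_j)=0$. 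The lower-right block vanishes already. Since $(y,z)$ has nonsingular Gram matrix, the components of $(y,z)$ are $\kappa'$-independent. Hence the map $V_\kappa\to(\kappa')^{d}$, $u\mapsto(h(u,y_i),h(u,z_j))$, is surjective, as in the proof of Lemma \ref{lem:tangent moment}. This lets us prescribe all pairings $h(u_i,y_j)$ and $h(u_i,z_j)$ independently. Choose $h(u_i,z_j)=-h(y_i,v_j)$, and choose $h(u_i,y_j)$ so that $A=(h(y_i,u_j))$ satisfies $A+{}^tA^\sigma=0$, for instance $A=0$. This gives a lift, so $\pi_2$ is smooth. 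Alternatively, the same computation shows that the first arrow of the factorization \eqref{tangent factor} is surjective onto $\mathrm{Mat}_{d\times d}$, which immediately gives surjectivity onto the relevant blocks.
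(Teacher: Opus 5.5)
Your proof is correct. For the two cartesian squares, the smoothness of $\tau_1$ (Lemma \ref{lem:tangent moment} applied to $V(z^\perp)$), and the middle arrow (base change), it agrees with the paper. You differ from the paper on the smoothness of $\pi_2$.

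The paper computes no differential there. Letting $z$ range over geometric points, it uses the left cartesian square to identify the fibers of $\pi_2$ with the schemes $X_{T_1}(z^\perp)$, which are already known to be smooth from the right square. It then computes in \eqref{X_T dimension} that these fibers have dimension $\dim X_T-\dim X_{T_2}$. Finally it applies miracle flatness (smooth source and target, fibers of the expected dimension) to conclude that $\pi_2$ is flat, hence smooth since its fibers are smooth.

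You instead check directly that
\[
T_{(y,z)}X_T=\ker(d\tau_{(y,z)})\to\ker(d\tau_{2,z})=T_zX_{T_2}
\]
is surjective at every geometric point. You do this by using the surjectivity of $u\mapsto(h(u,y_j),h(u,z_k))$, which is immediate from the invertibility of $T$, to kill the upper-left and off-diagonal blocks. This is the same kind of argument as Lemma \ref{lem:tangent moment}. It avoids flatness and dimension bookkeeping, and it does not depend on the cartesian squares at all.

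The paper's route, in exchange, produces the dimension identity \eqref{X_T dimension} as a byproduct, and it reuses that identity in the proof of Lemma \ref{lem:Phi smooth}. In your approach, that identity would instead be read off afterwards from the smoothness of $\pi_2$ together with the identification of its fibers.
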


\begin{proof}
The cartesian claim is clear after unwinding  the definitions.

For the smoothness claims, let us ease notation by replacing $F$ by an algebraic closure, and $F'$ by $F\times F$.
Given a point 
$
y\in X_1(z^\perp)^{\mathrm{ns}}(F),
$
the nonsingularity of the moment matrix  $\tau_1(y)$ implies that the components of $y$ generate a free $F'$-submodule of $V(z^\perp)$ of rank $d_1$.  
It now follows from Lemma \ref{lem:tangent moment}, applied to the Hermitian space
$V(z^\perp)$, that the arrow  labeled $\tau_1$ in \eqref{factor double fiber} is smooth.

The middle vertical arrow in \eqref{factor double fiber} is smooth because it is the base change of a smooth morphism.
In particular, this proves that  $X_{T_1}(z^\perp)$ is a smooth $F$-scheme of dimension 
\begin{align}\label{X_T dimension}
\dim X_{T_1}(z^\perp) & = 
 \dim X_1(z^\perp)^\mathrm{ns}   - \dim Y_1^\mathrm{ns}   \\
 & = d_1\bigl(  \dim_{F}V - 2 d_2\bigr)-  d_1^2  \nonumber  \\
 & = d (\dim_F V - d) - d_2( \dim_F V -d_2 ) \nonumber  \\
& = \dim X_T-\dim X_{T_2}.  \nonumber 
\end{align}

Using the fact that the square on the left in \eqref{factor double fiber} is cartesian, we may allow $z$ to vary and deduce from the previous paragraph that  the arrow labeled $\pi_2$ in \eqref{factor double fiber} has smooth fibers all of dimension $\dim X_T-\dim X_{T_2}$.  Using the  miracle flatness criterion from  
\cite[\href{https://stacks.math.columbia.edu/tag/00R4}{Lemma 00R4}]{stacks-project}, we find that $\pi_2$
is a flat morphism between smooth $F$-schemes with smooth fibers, and hence is smooth.
\end{proof}

Given tuples $y \in V^{d_1}$ and $z \in V^{d_2}$, let 
\[
h_{12}(y,z)= \big(h(y_i,z_j)\big)_{i,j}    \in \mathrm{Mat}_{d_1 \times d_2}(F')
\]
 be the upper-right  block of the moment matrix of $(y,z) \in V^d$.   We regard this as a morphism of $F$-schemes
\[
h_{12}:X_1\times X_2\longrightarrow M .
\]
 Unpacking the definitions, one obtains a cartesian diagram
 \begin{equation}\label{Phi diagram}
\begin{tikzcd}
{  X_T }  \ar[rr ]\ar[d]&  &  { X^\mathrm{ns}_1\times X_{T_2} } \ar[d,"\Phi"] \\
{ X_{T_2} }  \ar[rr," { z \mapsto ( T_1 ,0 , z )}" '] & &   {    Y^\mathrm{ns}_1\times M   \times X_{T_2} } 
\end{tikzcd}
\end{equation}
in which the right vertical arrow is 
 \[
 \Phi(y,z) = \left( \tau_1(y),h_{12}(y,z),z \right) .
 \]
Taking the fiber of the entire diagram at a fixed $z \in X_{T_2}(F)$ results in the cartesian diagram
\begin{equation}\label{mini z fiber}
\begin{tikzcd}
{ X_{T_1}(z^\perp) }    \ar[r] \ar[d]  
&  {  X_1^\mathrm{ns} } \ar[d, "{(\tau_1,h_z) }" ]  \\
{ \Spec(F) } \ar[r , " {(T_1,0)}" ']   &  {  Y_1^\mathrm{ns}  \times M   } ,
\end{tikzcd}
\end{equation}
where $h_z : X_1 \to M$ is the morphism from \eqref{h_z def}.

\begin{lemma}\label{lem:Phi smooth}
The morphism $\Phi$ is smooth at every point of $X_T$.
In particular, the right vertical arrow in \eqref{mini z fiber} is smooth at every point of $X_{T_1}(z^\perp)$.
\end{lemma}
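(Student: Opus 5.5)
We will prove smoothness of $\Phi$ by the same tangent-space argument as in Lemma \ref{lem:tangent moment}. First pass to an algebraically closed field $\kappa$ containing $F$, so that $F'\otimes_F\kappa \iso \kappa\times\kappa$. Both source and target of $\Phi$ are smooth: $X_1^\mathrm{ns}$ is open in affine space, and $X_{T_2}$ is smooth by Lemma \ref{lem:tangent moment} and the cartesian diagram defining it. It therefore suffices to check that $d\Phi$ is surjective at every $\kappa$-point $(y,z)$ of $X_T$. Smoothness is an open condition, so surjectivity at these points gives smoothness on an open neighborhood of $X_T$, which is all that is claimed.

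Fix $(y,z)\in X_T(\kappa)$ and a tangent target vector $(\alpha,\beta,\zeta)$, where $\alpha\in\Herm_{d_1}(\kappa')$, $\beta\in M(\kappa)$, and $\zeta\in T_zX_{T_2}$. We first lift $\zeta$ to the tangent vector $(0,\zeta)$ of $X_1^\mathrm{ns}\times X_{T_2}$. Its image is $(0,\, h_{12}(y,\zeta),\, \zeta)$ with respect to the $z$-derivative of $h_{12}$. After subtracting this, we reduce to hitting $(\alpha,\beta',0)$ using tangent vectors of the form $(v,0)$ with $v\in V_\kappa^{d_1}$. The map in question is
\[
v\mapsto \Big( \big(h(y_i,v_j)+h(v_i,y_j)\big)_{i,j},\ \big(h(v_i,z_k)\big)_{i,k}\Big).
\]
This is exactly the differential of the moment map at the $d$-tuple $(y,z)$, restricted to variations of the first $d_1$ components and projected onto the $Y_1\times M$ blocks.

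Since $\tau(y,z)=T$ is nonsingular, the components of $(y,z)$ generate a free $\kappa'$-submodule of rank $d$. The proof of Lemma \ref{lem:tangent moment} then shows that $(v_1,\dots,v_d)\mapsto (h(x_i,v_j))_{i,j}$ is surjective onto $\mathrm{Mat}_{d\times d}(\kappa')$, with $x=(y,z)$. Its restriction to the first $d_1$ columns, $v\mapsto (h(x_i,v_j))_{i\le d,\,j\le d_1}$, is therefore surjective onto $\mathrm{Mat}_{d\times d_1}(\kappa')$.

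Given a target $(\alpha,\beta')$, we choose $v\in V_\kappa^{d_1}$ with $h(y_i,v_j)=a_{ij}$, where $a+{}^ta^\sigma=\alpha$; such an $a$ exists because the trace $\kappa'\to\kappa$ is surjective. We also require $h(z_k,v_j)=\sigma(\beta'_{jk})$, which is the same as $h(v_j,z_k)=\beta'_{jk}$. With these conditions, $v$ maps to $(\alpha,\beta')$, and $d\Phi$ is surjective.

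The second statement follows because diagram \eqref{mini z fiber} is the fiber of \eqref{Phi diagram} over $z$. Concretely, $(\tau_1,h_z)$ is the base change of $\Phi$ along $\{z\}\to X_{T_2}$, restricted to the fiber over $z$. The differential computation above, performed with $\zeta=0$, shows directly that $(\tau_1,h_z)$ has surjective differential at points of $X_{T_1}(z^\perp)$.

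The main care point is bookkeeping the $\sigma$-linearity in the $M$-component and making sure the simultaneous prescription of the $h(y_i,\cdot)$ and $h(z_k,\cdot)$ pairings is legitimate. That legitimacy is exactly the freeness of the span of all $d$ components.
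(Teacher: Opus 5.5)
Your proof is correct, but it takes a different route from the paper.

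\textbf{The paper's route.} The paper never computes $d\Phi$. It identifies the geometric fiber of $\Phi$ through $(y,z)$ with $X_{T_1}(z^\perp)$. Lemma \ref{lem:fiber integral 1} already showed this fiber is smooth of dimension \eqref{X_T dimension}. The paper checks that this equals the expected relative dimension $\dim X_1^{\mathrm{ns}}-\dim(Y_1^{\mathrm{ns}}\times M)$, invokes miracle flatness (Stacks 00R4) to get flatness of $\Phi$ at $(y,z)$, and concludes smoothness. The second assertion then follows by base change.

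\textbf{Your route.} You compute $d\Phi$ directly. You reduce its surjectivity to the column-by-column surjectivity of $w\mapsto (h(x_i,w))_{i\le d}$ from the proof of Lemma \ref{lem:tangent moment}, applied to the full $d$-tuple $x=(y,z)$. This lets you prescribe the $Y_1$-block (through $a+{}^ta^\sigma=\alpha$) and the $M$-block $h(v_j,z_k)$ independently. The $\sigma$-linearity bookkeeping and the lifting of $\zeta$ via $(0,\zeta)$ are both handled correctly.

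\textbf{Comparison.} Your version is more elementary and self-contained: it needs no flatness criterion and no dimension count. It also shows slightly more, namely that $\Phi$ is smooth at every point of $X_1^{\mathrm{ns}}\times X_{T_2}$ where the components of $(y,z)$ span a free rank-$d$ submodule, not only along $X_T$. The paper's version instead recycles the smoothness and dimension of $X_{T_1}(z^\perp)$ that it had already established in Lemma \ref{lem:fiber integral 1}, so it requires no new computation at this point.
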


\begin{proof}
Because smoothness can be checked after extending scalars from $F$ to an algebraic closure, we ease notation by assuming  $F$ is algebraically closed, and $F'=F\times F$.
Fix a geometric point
\[
(y,z)\in  X_T(F),
\]
so that $y\in X_1(F)$, $z \in X_2(F)$, and the  concatenated tuple $(y,z) \in X$ has moment matrix  $T$. 
The geometric fiber of $\Phi$ over 
\[
\Phi(y,z) = ( T_1 , 0 , z ) 
\]
 is naturally identified with $X_{T_1}(z^\perp)$, which  is a smooth $F$-scheme of dimension \eqref{X_T dimension}.
 One can easily check that this dimension agrees with 
\[
\dim\big(X_1^{\mathrm{ns}}\times X_{T_2}\big)
-\dim\big( Y_1^{\mathrm{ns}}\times M \times X_{T_2}\big) 
= \dim ( X_1^{\mathrm{ns}}  )
-\dim\big( Y_1^{\mathrm{ns}}\times M \big) .
\]
In other words, the fiber of $\Phi$ over $\Phi(y,z)$ is smooth of  the expected dimension, and because  the source and target of $\Phi$ are smooth, it follows that $\Phi$ is smooth at $(y,z)$.  As in the proof of Lemma \ref{lem:fiber integral 1}, this uses  \cite[\href{https://stacks.math.columbia.edu/tag/00R4}{Lemma 00R4}]{stacks-project} for  the  flatness of $\Phi$ at $(y,z)$.
\end{proof}

Recall  the constructions from  \eqref{quotient form} and \eqref{mewT}: Suppose we are given a cartesian diagram
\[
\begin{tikzcd}
{ P_s  }  \ar[r]   \ar[d] &  {  P  } \ar[d , " f " ] \\
{ \Spec(F) } \ar[r, " s " ] & { S }  
\end{tikzcd}
\]
in which $f$   is a smooth morphism between smooth equidimensional $F$-schemes.
If  $\mu_P$ and $\mu_S$ are nowhere vanishing top-degree forms on $P$ and $S$, there is a  top-degree relative form 
\[
\frac{\mu_P}{ f^*\mu_S} \in H^0( P , \Omega_{P/S}^{ \mathrm{top}} ) 
\]
  characterized by the equality
\[
\mu_P
=
\frac{\mu_P}{f^*\mu_S} \otimes f^*\mu_S
\]
under the canonical isomorphism
$
\Omega_P^{\mathrm{top}} \iso \Omega_{P/S}^{ \mathrm{top} } \otimes f^* \Omega_S^{\mathrm{top}} .
$
It restricts to a  top-degree form on the fiber, denoted
\begin{equation}\label{general fiber form}
\frac{\mu_P}{f^*\mu_S}  \Big|_{P_s} \in H^0( P_s , \Omega^\mathrm{top}_{P_s} ) .
\end{equation}
Note that in order to construct \eqref{general fiber form}  one only needs $f$ to be smooth at every point of (hence in an open neighborhood of) the fiber $P_s$, not at every point of $P$.

For any point $z \in X_{T_2}(F)$,   we may  apply  this construction  to the two cartesian squares in  \eqref{factor double fiber}.  This results in  two top-degree forms on  the common fiber $X_{T_1}(z^\perp)$, and the  technical core of the proof of Theorem \ref{thm:reduction formula} is the relation 
\begin{equation}\label{the two forms}
(-1)^{d_1 d_2}  \cdot 
\frac{\mu_{X_T} }{  \pi_2^*\mu_{ X_{T_2} } }  \Big|_{X_{T_1}( z^\perp) }  
=
\frac{ \mu_{X_1(z^\perp)^\mathrm{ns} }}{ \tau_1^* \mu_{Y_1^\mathrm{ns}}  }  \Big|_{X_{T_1}( z^\perp) }  
\end{equation}
between them. 
We will prove  \eqref{the two forms} by showing that both sides are equal to the  top-degree form
\begin{equation}\label{auxiliary fiber form}
\nu_z
=
\frac{\mu_{X_1^{\mathrm{ns}}}}
     {      ( \tau_1^* \mu_{Y_1^{\mathrm{ns}}} )  \wedge ( h_z^* \mu_M)} \Big|_{X_{T_1}(z^\perp)} 
\end{equation}
associated to the diagram \eqref{mini z fiber}.

\begin{lemma}\label{lem:second nu}
The form \eqref{auxiliary fiber form} satisfies
\begin{align*}
\nu_z
&=
\frac{
\mu_{X_1(z^\perp)^{\mathrm{ns}}}
}{
\tau_1^*\mu_{Y_1^{\mathrm{ns}}}
}
\Big|_{X_{T_1}(z^\perp)}.
\end{align*}
\end{lemma}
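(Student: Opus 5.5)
We work locally near a point of $X_{T_1}(z^\perp)$ and exploit the product decomposition \eqref{z splitting}, $X_1 = X_1(z^\perp)\times X_1(z)$. Under this identification, write $y = (y', y'')$ with $y'\in V(z^\perp)^{d_1}$ and $y''\in V(z)^{d_1}$. Two observations drive the argument.

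First, the map $h_z : X_1 \to M$ factors through the projection to $X_1(z)$, where it is the linear isomorphism \eqref{h_z iso}. Second, on the closed subscheme $h_z^{-1}(0) = X_1(z^\perp)\times\{0\}$, the moment morphism $\tau_1$ agrees with its restriction to $X_1(z^\perp)$. More precisely, $\tau_1(y',y'') = \tau_1(y') + \tau_1(y'')$, because the cross terms vanish by orthogonality.

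The key step is then an identity of relative forms. We take the isomorphism $\Phi_z=(\mathrm{id}, h_z) : X_1(z^\perp)\times X_1(z) \to X_1(z^\perp)\times M$ as new coordinates. In these coordinates the map $(\tau_1,h_z)$ becomes
\[
(y',m)\mapsto \bigl(\tau_1(y') + \tau_1(h_z^{-1}(m)),\, m\bigr).
\]
By the normalization $\mu_{X_1} = \mu_{X_1(z^\perp)}\boxtimes h_z^*\mu_M$, the form $\mu_{X_1^{\mathrm{ns}}}$ is the pullback of $\mu_{X_1(z^\perp)}\boxtimes\mu_M$. We then compute $\nu_z$ by the characterizing relation
\[
\mu_{X_1} = \nu\otimes \bigl(\tau_1^*\mu_{Y_1}\wedge h_z^*\mu_M\bigr).
\]
Since $h_z^*\mu_M$ already accounts for all of the $X_1(z)$-directions, modulo the ideal generated by $dm$ (i.e.\ by $h_z^*\Omega^1_M$) we have
\[
d\tau_1(y',y'') \equiv d\tau_1(y') + d\bigl(\tau_1(y'')\bigr)|_{dm\text{-part}} \equiv d(\tau_1|_{X_1(z^\perp)})(y') .
\]
Here the second term lies in the ideal because $y'' = h_z^{-1}(m)$ is a function of $m$ alone. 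Consequently
\[
\tau_1^*\mu_{Y_1}\wedge h_z^*\mu_M = \bigl(\tau_1|_{X_1(z^\perp)}\bigr)^*\mu_{Y_1}\wedge h_z^*\mu_M
\]
as top forms pulled back from the base $Y_1\times M$ of the relative tangent computation. So the quotient form along the fiber over $(T_1,0)$ equals the quotient of $\mu_{X_1(z^\perp)}$ by $\tau_1^*\mu_{Y_1}$. This is exactly the form for the right square of \eqref{factor double fiber}, restricted to $X_{T_1}(z^\perp)$.

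To make this rigorous, I would argue with the exact sequences of cotangent bundles. On a neighborhood of the fiber, I would identify
\[
\Omega_{X_1^{\mathrm{ns}}/(Y_1\times M)}|_{X_{T_1}(z^\perp)} \iso \Omega_{X_1(z^\perp)^{\mathrm{ns}}/Y_1}|_{X_{T_1}(z^\perp)},
\]
both being the cotangent bundle of $X_{T_1}(z^\perp)$. I would then check that the determinant isomorphisms are compatible: the splitting $\Omega^1_{X_1}=\Omega^1_{X_1(z^\perp)}\oplus\Omega^1_{X_1(z)}$ matches $\Omega^1_{Y_1}\oplus\Omega^1_M$ on the pulled-back parts, up to a unipotent (triangular) change of basis, which has determinant $1$.

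The main obstacle is precisely this bookkeeping. One must confirm that the triangular change of basis contributes no sign or scalar. Its diagonal blocks are the identity on the $\tau_1$-part and $h_z^*$ on the $M$-part, the latter being absorbed by the definition of $\mu_{X_1(z)}$. One must also check that the ordering of wedge factors in \eqref{auxiliary fiber form} matches the ordering in $\mu_{X_1(z^\perp)}\boxtimes\mu_{X_1(z)}$. Any sign from reordering would be handled consistently with the $(-1)^{d_1d_2}$ convention; since the sign appears in \eqref{the two forms} and not in this lemma, I expect no sign to arise here.
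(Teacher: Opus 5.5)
Your proof is correct, but it gets rid of the extra term by a different mechanism than the paper.

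The paper compares $F_z=(\tau_1,h_z)$ with the product map $G_z(y,y')=(\tau_1(y),h_z(y'))$. It observes that $F_z-G_z=(\tau_1(y'),0)$ vanishes together with its differential along $y'=0$, because $\tau_1$ is homogeneous of degree two. It then invokes the principle that a quotient form on a smooth fiber depends only on the cotangent map along that fiber.

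You prove something stronger: the exact identity $F_z^*(\mu_{Y_1}\boxtimes\mu_M)=G_z^*(\mu_{Y_1}\boxtimes\mu_M)$ holds on all of $X_1$. The discrepancy $d(\tau_1(y''))$ lies in the span of $h_z^*\Omega^1_M$, so it is annihilated by the top form $h_z^*\mu_M$. This needs only additivity of $\tau_1$ across $V=V(z^\perp)\oplus V(z)$, not homogeneity.

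Your route also makes the conclusion immediate. Writing $\mu_{X_1(z^\perp)}=\gamma\wedge\tau_1^*\mu_{Y_1}$ locally on $X_1(z^\perp)^{\mathrm{ns}}$, you get $\mu_{X_1}=\gamma\wedge\tau_1^*\mu_{Y_1}\wedge h_z^*\mu_M$ near the fiber. The factors are already in the order used in \eqref{auxiliary fiber form}, so $\nu_z=\gamma|_{X_{T_1}(z^\perp)}$ with no sign and no change of basis.

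Your last two paragraphs are therefore unnecessary: the unipotent bookkeeping, and especially the appeal to where $(-1)^{d_1d_2}$ ``should'' appear. The latter is not a valid justification in any case; the direct computation settles it.

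Two points should be said explicitly instead. First, $F_z^{-1}(T_1,0)=X_{T_1}(z^\perp)\times\{0\}$ scheme-theoretically, since $h_z(y')=0$ forces $y'=0$. Second, for a smooth $f\colon P\to S$ the restricted quotient form equals $\iota^*\beta$ for any local $\beta$ with $\mu_P=\beta\wedge f^*\mu_S$. Hence equal pulled-back forms give equal fiber forms.
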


\begin{proof}
We use the  decomposition  $X_1 = X_1(z^\perp)\times X_1(z)$  from  \eqref{z splitting}.
 Under this identification, the morphism $(\tau_1,h_z)$ in \eqref{mini z fiber} is equal to the morphism
\[
X_1= X_1(z^\perp)\times X_1(z) \map{ F_z}  Y_1\times M,
\]
defined by 
\[
F_z(y,y')
=
\bigl(\tau_1(y)+\tau_1(y'),h_z(y')\bigr).
\]
Here we have used the orthogonality of all components of $y\in V(z^\perp)^{d_1}$ to all components of  $y'\in V(z)^{d_1}$ to obtain
\[
\tau_1(y+y')=\tau_1(y)+\tau_1(y'),
\]
and the orthogonality of all components of $y\in V(z^\perp)^{d_1}$ to all components of $z\in V^{d_2}$ to obtain  $h_z(y+y') = h_z(y')$.
The fiber of $F_z$ over $(T_1,0)$ is
\[
 X_{T_1}(z^\perp)\times\{0\} \subset X_1.
\]

Consider also the morphism
\[
X_1= X_1(z^\perp)\times X_1(z) \map{G_z}  Y_1\times M
\]
defined by
\[
G_z(y,y')
=
\bigl(\tau_1(y),h_z(y')\bigr).
\]

The maps $F_z$ and $G_z$  agree on  $X_{T_1}(z^\perp)\times\{0\}$, and induce the same map on tangent spaces at every point of that closed subscheme.  
To see why, use the $F$-vector space structure on $Y_1 \times M$ to form the difference 
\[
( F_z - G_z)( y , y') = ( \tau_1(y') , 0)  .
\]
Because the moment map $ \tau_1 : V^{d_1} \to \Herm_{d_1}(F')$ is a homogeneous map of degree two  between $F$-vector spaces, both it and its differential vanish at the origin.  Hence the same is true of $F_z - G_z$ at any point of the form $(y,0)$.

The quotient form \eqref{general fiber form} on a smooth fiber is determined by the induced
cotangent map along that fiber.  Therefore $F_z$ and $G_z$ determine the
same quotient form 
\[
\frac{ \mu_{X_1^\mathrm{ns}} }{  F_z^*( \mu_{Y_1^\mathrm{ns}} \boxtimes \mu_M) } \Big|_{ X_{T_1}(z^\perp)  } 
= 
\frac{ \mu_{X_1^\mathrm{ns}} }{  G_z^*( \mu_{Y_1^\mathrm{ns}} \boxtimes \mu_M) } \Big|_{ X_{T_1}(z^\perp)  } 
\]
on the fiber $X_{T_1}(z^\perp)\times\{0\} \subset X_{T_1}$.

We normalized the top-degree form $\mu_{X_1(z^\perp)}$ in such a way that, under the decomposition \eqref{z splitting},   we have
\[
\mu_{X_1}
=
\mu_{X_1(z^\perp)}
\boxtimes h_z^*\mu_M.
\]
This allows us  to compute
\begin{align*}
\nu_z
&=
\frac{\mu_{X_1^{\mathrm{ns}}}}    {F_z^*      (\mu_{Y_1^{\mathrm{ns}}}\boxtimes\mu_M)} \Big|_{X_{T_1}(z^\perp)  \times \{0\}}  \\
&=
\frac  {\mu_{X_1^{\mathrm{ns}}}}    {G_z^*(\mu_{Y_1^{\mathrm{ns}}}\boxtimes\mu_M)
}
\Big|_{X_{T_1}(z^\perp)\times\{0\}}\\
&=
\frac{
\mu_{X_1(z^\perp)^{\mathrm{ns}}}
\boxtimes h_z^*\mu_M
}{
\tau_1^*\mu_{Y_1^{\mathrm{ns}}}
\boxtimes h_z^*\mu_M
}
\Big|_{X_{T_1}(z^\perp)\times\{0\} }\\
&=
\frac{
\mu_{X_1(z^\perp)^{\mathrm{ns}}}
}{
\tau_1^*\mu_{Y_1^{\mathrm{ns}}}
}
\Big|_{X_{T_1}(z^\perp)}
\end{align*}
as claimed.
\end{proof}

\begin{lemma}\label{lem:first nu}
We have the equality
\[
(-1)^{d_1d_2}   \nu_z  =   \frac{\mu_{X_T}}  {\pi_2^*\mu_{X_{T_2}}}  \Big|_{X_{T_1}(z^\perp)}.
\]
\end{lemma}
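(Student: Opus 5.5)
The plan is to compute the left-hand side $\mu_{X_T}/\pi_2^*\mu_{X_{T_2}}$ by transitivity of quotient forms, using the cartesian square \eqref{Phi diagram} and the smoothness of $\Phi$ along $X_T$ (Lemma \ref{lem:Phi smooth}). The general principle I will isolate first is the following. Suppose $P \map{g} Q \map{f} S$ are morphisms of smooth equidimensional $F$-schemes, smooth along the relevant fibers, with nowhere vanishing top forms $\mu_P,\mu_Q,\mu_S$. Then the quotient forms satisfy
\[
\frac{\mu_P}{(fg)^*\mu_S} = \frac{\mu_P}{g^*\mu_Q}\wedge g^*\Big(\frac{\mu_Q}{f^*\mu_S}\Big),
\]
up to an explicit sign determined by the order of the factors. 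Moreover, the construction \eqref{general fiber form} is compatible with base change along a cartesian square. Both statements are checked pointwise on cotangent spaces using the short exact sequences of differentials, exactly as in the construction of \eqref{quotient form}.

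The first step is to rewrite $\mu_{X_T}=\mu_{X^{\mathrm{ns}}}/\tau^*\mu_{Y^{\mathrm{ns}}}|_{X_T}$. I factor $\tau$ near $X_T$ through the intermediate scheme $X_1\times X_2 \to X_1\times Y_2$, $(y,z)\mapsto (y,\tau_2(z))$. This is smooth near $X_T$ by Lemma \ref{lem:tangent moment} applied to $X_2$, and its fiber over $T_2$ is $X_1\times X_{T_2}$. Since $\mu_X=\mu_{X_1}\boxtimes\mu_{X_2}$, the quotient form on that fiber is $\mu_{X_1}\boxtimes\mu_{X_{T_2}}$. Next I use the decomposition $\mu_Y=\mu_{Y_1}\boxtimes\mu_{Y_2}\boxtimes\mu_M$ and the transitivity principle. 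This expresses $\mu_{X_T}$, up to a sign coming from reordering the $Y_2$ and $M$ factors, as the quotient form of $\mu_{X_1}\boxtimes\mu_{X_{T_2}}$ by the pullback of $\mu_{Y_1}\boxtimes\mu_M$ along $(y,z)\mapsto(\tau_1(y),h_{12}(y,z))$. Equivalently, it is the quotient form for $\Phi$ relative to $\mu_{Y_1}\boxtimes\mu_M\boxtimes\mu_{X_{T_2}}$, restricted to the fiber $X_T$ of the square \eqref{Phi diagram}.

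The second step divides by $\pi_2^*\mu_{X_{T_2}}$. Here $\pi_2$ is the base change of the projection $Y_1^{\mathrm{ns}}\times M\times X_{T_2}\to X_{T_2}$ along \eqref{Phi diagram}, so the transitivity principle applies again. It identifies $\mu_{X_T}/\pi_2^*\mu_{X_{T_2}}$ restricted to the fiber over $z$ with the quotient form of $\mu_{X_1}$ by $(\tau_1,h_z)^*(\mu_{Y_1}\boxtimes\mu_M)$ on the fiber of \eqref{mini z fiber}. That form is exactly $\nu_z$ as defined in \eqref{auxiliary fiber form}. The base-change compatibility is needed to pass from the family over $X_{T_2}$ to the single fiber at $z$. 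This step uses that the restriction of $\Phi$ to $\{z\}\times X_1$ is $(\tau_1,h_z)$.

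The main obstacle I anticipate is the sign bookkeeping rather than any geometric issue. Every identification $\Omega^{\mathrm{top}}_P\iso\Omega^{\mathrm{top}}_{P/S}\otimes f^*\Omega^{\mathrm{top}}_S$ involves an ordering convention. The reordering of factors between $Y=Y_1\times Y_2\times M$ and the factorization $X_1\times X_2\to Y_1\times M\times Y_2$ produces signs $(-1)^{\deg\cdot\deg}$. A further sign comes from moving $\pi_2^*\mu_{X_{T_2}}$ past the relative forms of dimension $\dim X_{T_1}(z^\perp)$. I will fix one convention (relative part on the left, as in the definition of \eqref{quotient form}) and track each transposition explicitly, using $\dim_F M=2d_1d_2$, $\dim_F Y_i=d_i^2$, and \eqref{X_T dimension}. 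The claim is that the net sign is $(-1)^{d_1d_2}$. If the naive count gives an even sign, I will recheck the convention used for $\mu_{X_1}\boxtimes\mu_{X_{T_2}}$ versus $\mu_X=\mu_{X_1}\boxtimes\mu_{X_2}$, since the transposition of $\mu_{X_{T_2}}$ past the $d_1d_2$-dimensional block coming from $M$ is the likely source of the odd contribution.
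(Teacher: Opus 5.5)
Your architecture is the paper's. You pass to $X_1^{\mathrm{ns}}\times X_{T_2}$ with the form $\mu_{X_1}\boxtimes\mu_{X_{T_2}}$, obtained by dividing $\mu_X$ by $\tau_2^*\mu_{Y_2}$. You identify $\mu_{X_T}$ with the quotient of that form by $\Psi^*(\mu_{Y_1}\boxtimes\mu_M)$, where $\Psi(y,z)=(\tau_1(y),h_{12}(y,z))$. You then compare with the $\Phi$-quotient form (the paper's local form $\alpha$), whose restriction to the fiber at $z$ is $\nu_z$ by base change. Two slips of wording should be fixed. First, $\tau$ does not factor through $X_1\times Y_2$, since $h_{12}(y,z)$ depends on $z$ and not only on $\tau_2(z)$; what you actually use is transitivity for $X\to Y\to Y_2$. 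Second, $\pi_2$ is the base change of $\Phi$ along $z'\mapsto(T_1,0,z')$, not of the projection.

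The genuine gap is the sign. It is the only content of the lemma beyond ``equal up to sign'', and you assert it rather than compute it. Your guesses about its origin are also wrong. Since $\dim_F M=2d_1d_2$ is even, neither the reordering of $Y_2$ and $M$ nor any transposition past the $M$-block contributes anything. With relative forms kept on the left, the passage $\mu_X\mapsto\mu_{X_1}\boxtimes\mu_{X_{T_2}}$ is also sign-free, because $\tau_2^*\mu_{Y_2}$ is already rightmost.

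The sign enters exactly at your ``Equivalently'' in Step 1, which is not an equivalence. Write $\mu_{X_1}\boxtimes\mu_{X_{T_2}}=\alpha\wedge\Psi^*(\mu_{Y_1}\boxtimes\mu_M)\wedge\mathrm{pr}_2^*\mu_{X_{T_2}}$. You must move $\mathrm{pr}_2^*\mu_{X_{T_2}}$ to the left of $\Psi^*(\mu_{Y_1}\boxtimes\mu_M)$ before dividing by the latter. This costs $(-1)^{\dim(Y_1\times M)\dim X_{T_2}}=(-1)^{(d_1^2+2d_1d_2)(d_2\dim_F V-d_2^2)}=(-1)^{d_1d_2}$. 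The odd parts come from $\dim Y_1=d_1^2$ and from $\dim X_{T_2}\equiv d_2^2$, using that $\dim_F V$ is even. The result is $\mu_{X_T}=(-1)^{d_1d_2}\,\alpha|_{X_T}\wedge\pi_2^*\mu_{X_{T_2}}$, which gives the lemma.

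Your fallback (``if the naive count is even, recheck the convention'') is not legitimate. You also contemplate transposing $\pi_2^*\mu_{X_{T_2}}$ past the relative part. That would produce a second factor $(-1)^{\dim X_{T_1}(z^\perp)\dim X_{T_2}}=(-1)^{d_1d_2}$, cancelling the first. Indeed, with the opposite (base-first) convention used consistently, the identity holds with no sign at all. So the sign must be computed in the paper's fixed convention $\mu_P=\frac{\mu_P}{f^*\mu_S}\otimes f^*\mu_S$, not adjusted toward the target.
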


\begin{proof}
It is enough to prove the identity locally on  $X_{T_1}(z^\perp)$.  
Fix a point $y$ of this scheme, identify it with its image   $(y,z)\in X_T$ under \eqref{zinc}, and work on an open neighborhood of that point in $X_1^{\mathrm{ns}}\times X_{T_2}$ small enough that the morphism 
\[
\Phi:
X_1^{\mathrm{ns}}\times X_{T_2}
\longrightarrow
  Y_1^{\mathrm{ns}}\times M  \times X_{T_2}
\]
of \eqref{Phi diagram} is smooth (Lemma \ref{lem:Phi smooth}). 
By smoothness of $\Phi$, and after further shrinking our neighborhood of $(y,z)$  if necessary, there
is a differential form $\alpha$ on $X_1^{\mathrm{ns}}\times X_{T_2}$ such that
\begin{align}\label{alpha factorization for Phi}
\mu_{X_1^{\mathrm{ns}}}\boxtimes\mu_{X_{T_2}}
& =
\alpha  \wedge \Phi ^*  \big(\mu_{Y_1^{\mathrm{ns}}}\boxtimes\mu_M  \boxtimes \mu_{X_{T_2}}  \big)   .  
\end{align}

For a fixed $z \in X_{T_2}(F)$, the diagram  \eqref{mini z fiber} can be extended to
\[
\begin{tikzcd}
{ X_{T_1}(z^\perp) }    \ar[r]   \ar[d]   & {  X^\mathrm{ns}_1 }  \ar[rrr , " {y \mapsto (y,z) }"  ]\ar[d , "  {  ( \tau_1 , h_z )  } "  ]&  &  &  { X^\mathrm{ns}_1\times X_{T_2} } \ar[d,"\Phi"] \\
{ \Spec(F) }  \ar[r, " { (  T_1,0  )   }  " ']  &  { Y^\mathrm{ns}_1\times M }  \ar[rrr  , " { ( T_1 , A) \mapsto (T_1,A, z)   } " ' ] & &   & {    Y^\mathrm{ns}_1\times M   \times X_{T_2} } 
\end{tikzcd}
\]
with both squares cartesian.  
Directly from the definition    \eqref{auxiliary fiber form},  one finds the equalities
\begin{equation}\label{mid nu_z}
\nu_z  
   = \frac{ \mu_{X_1^{\mathrm{ns}}}\boxtimes\mu_{X_{T_2}} }{  \Phi^* \big(\mu_{Y_1^{\mathrm{ns}}}\boxtimes\mu_M \boxtimes  \mu_{X_{T_2}} \big)     }\Big|_{X_{T_1}(z^\perp)     }     =      \alpha|_{X_{T_1}(z^\perp)  }
\end{equation}
of forms on $X_{T_1}(z^\perp) = X_{T_1}(z^\perp) \times \{ z\} \subset X_1^\mathrm{ns} \times X_{T_2}$.

On the other hand, we may similarly extend \eqref{Phi diagram} to
\[
\begin{tikzcd}
{ X_{T_1}(z^\perp) }    \ar[r]   \ar[d]   &   {  X_T }  \ar[rr ]\ar[d , " \pi_2" ] &  &  { X^\mathrm{ns}_1\times X_{T_2} } \ar[d,"\Phi"] \\
{ \Spec(F) }  \ar[r, " z " ']  &   { X_{T_2} }  \ar[rr," { z' \mapsto ( T_1 ,0 , z' )}" '] & &   {    Y^\mathrm{ns}_1\times M   \times X_{T_2} .} 
\end{tikzcd}
\]
We claim that there is an equality 
\begin{equation}\label{alpha money}
\alpha |_{X_{T}     }  = (-1)^{d_1 d_2} \cdot   \frac{ \mu_{X_T} }{  \pi_2^*\mu_{X_{T_2}} } 
\end{equation}
as local sections of $\Omega^\mathrm{top}_{ X_T / X_{T_2} }$ near the chosen point of  the closed subscheme $X_{T_1}(z^\perp) \subset X_T$.
  If we can show this, then restricting both sides to $X_{T_1}(z^\perp)$ and comparing with \eqref{mid nu_z} will complete the proof of Lemma \ref{lem:first nu}.

Define a morphism 
\[
 \Psi :  X_1^{\mathrm{ns}} \times X_2 \to  Y_1^{\mathrm{ns}}   \times M
\]
by $\Psi( y, z) = ( \tau_1(y) , h_{12}(y,z) )$, related to  the $\Phi$ above and the moment morphisms $\tau : X \to Y$ and $\tau_2: X_2 \to Y_2$ by the  diagram
\[
\begin{tikzcd}
{ X_T } \ar[r]  \ar[d, " {\pi_2} " ' ] & {  X_1^{\mathrm{ns}} \times X_{T_2}}  \ar[d , " \Phi" ' ]  \ar[rr , hookrightarrow  ] & &   {  X_1^{\mathrm{ns}} \times X_2 }  \ar[d , " {  (y,z )  \mapsto (\Psi(y,z), z)  } " ' ]   \ar[r , hookrightarrow , " \eqref{Xsplit}"]    & {  X } \ar[dd, " \tau" ]   \\
{ X_{T_2}  } \ar[d] \ar[r, " {  z \mapsto (T_1,0,z)} " ]  &   { ( Y_1^{\mathrm{ns}}\times M  ) \times X_{T_2}  } \ar[rr , hookrightarrow ]   \ar[d]   &  &  {  ( Y_1^{\mathrm{ns}}  \times M)   \times X_2  }  \ar[d , "  {  ( \mathrm{id} , \tau_2 ) }  " ' ]   \\
{  \Spec(F)  } \ar[r , " { (T_1,0 )  } " ' ] &   {      Y_1^{\mathrm{ns}}  \times M    }  \ar[rr ,  " {  (S_1,A) \mapsto (S_1,A, T_2)  } "  ' ]  &   &   {    (Y_1^{\mathrm{ns}}  \times M ) \times Y_2       }  \ar[r , hookrightarrow , " { \eqref{Ysplit} } " ' ]  & Y 
\end{tikzcd}
\]
with cartesian squares.

The equality  \eqref{alpha factorization for Phi} can be rewritten as 
\begin{align*}
\mu_{X_1^{\mathrm{ns}}}\boxtimes\mu_{X_{T_2}}
 &  =
\alpha  \wedge \Phi ^*  \big(\mu_{Y_1^{\mathrm{ns}}}\boxtimes\mu_M  \boxtimes \mu_{X_{T_2}}  \big)    \\
 &  =
\alpha  \wedge \Psi ^*  \big(\mu_{Y_1^{\mathrm{ns}}}\boxtimes\mu_M \big)     \wedge \mathrm{pr}_2^* \mu_{X_{T_2}}  \\
& = 
(-1)^{d_1 d_2}  \cdot  \alpha    \wedge \mathrm{pr}_2^* \mu_{X_{T_2}}  \wedge \Psi ^*  \big(\mu_{Y_1^{\mathrm{ns}}}\boxtimes\mu_M \big)  
\end{align*}
where  $\mathrm{pr}_2 :  X_1^{\mathrm{ns}} \times X_{T_2} \to X_{T_2}$ is the projection to the second factor, and we have used
\[
(-1)^{ \dim(Y_1 \times M) \dim(X_{T_2}) } = (-1)^{d_1 d_2} .
\]
Taking the quotient of both sides by $\Psi ^*  \big(\mu_{Y_1^{\mathrm{ns}}}\boxtimes\mu_M \big)$ and restricting to $X_T$ shows that 
\begin{equation}\label{X_T quotient 1}
 \frac{  \mu_{X_1^{\mathrm{ns}}}\boxtimes\mu_{X_{T_2}} }  { \Psi^*  \big(\mu_{Y_1^{\mathrm{ns}}}\boxtimes\mu_M\big) }  \Big|_{X_T} 
 =  (-1)^{d_1 d_2}  \cdot  \alpha|_{X_T}    \wedge \pi_2^* \mu_{X_{T_2}}  .
\end{equation}

Working locally on neighborhoods in $X$ of points of $X_T$, we may find a form $\beta$ such that 
\begin{equation}\label{beta form}
\mu_X = \beta \wedge \tau^*\mu_Y .
\end{equation}
By definition of $\mu_{X_T}$, we then have
\begin{equation}\label{beta X_T}
\mu_{X_T} =   \frac{\mu_X }{\tau^*\mu_Y}  \Big|_{X_T}  =  \beta|_{X_T} . 
\end{equation}
Using the commutativity of the rightmost square in the diagram above, we see that
\[
( \tau^*\mu_Y ) \big|_{ X_1^\mathrm{ns} \times X_2  }   = 
  \Psi^*( \mu_{Y_1^\mathrm{ns} } \boxtimes \mu_M ) \wedge  \tau_2^*\mu_{Y_2}.
\]
There is an implicit  factor of $(-1)^{\dim (M)\dim (Y_2)}$ in this equality because the  factors in \eqref{Ysplit} have  been reordered in the diagram, but this sign is $1$ because
$\dim (M) =   2 d_1 d_2$. 
 Restricting both sides of \eqref{beta form} along the inclusion $X_1^\mathrm{ns} \times X_2 \to X$, we obtain
\[
\mu_{X_1^\mathrm{ns}}  \boxtimes \mu_{X_2} 
 = \big(  \beta|_{X_1^\mathrm{ns} \times X_2}  \big) \wedge \Psi^*( \mu_{Y_1^\mathrm{ns} } \boxtimes \mu_M ) \wedge  \tau_2^*\mu_{Y_2} .
\]
Taking the quotient of both sides by $\tau_2^* \mu_{Y_2}$ and restricting to $X_1^\mathrm{ns} \times X_{T_2}$, we obtain
\[
\mu_{X_1^\mathrm{ns}}  \boxtimes \mu_{X_{T_2}}  
= \big(  \beta|_{X_1^\mathrm{ns} \times X_{T_2} }  \big)   \wedge \Psi^*( \mu_{Y_1^\mathrm{ns} } \boxtimes \mu_M )  .
\]
Now taking the quotient by $\Psi^*( \mu_{Y_1^\mathrm{ns} } \boxtimes \mu_M )$ and restricting to $X_T$ shows that 
\begin{equation}\label{X_T quotient 2}
 \frac{  \mu_{X_1^{\mathrm{ns}}}\boxtimes\mu_{X_{T_2}} }  { \Psi^*  \big(\mu_{Y_1^{\mathrm{ns}}}\boxtimes\mu_M\big) }  \Big|_{X_T} 
 =  \beta|_{X_T}  \stackrel{ \eqref{beta X_T}}{= }\mu_{X_T} .
\end{equation}

Comparing  \eqref{X_T quotient 1} and  \eqref{X_T quotient 2} proves the equality 
\[
\mu_{X_T} =  (-1)^{d_1 d_2}  \cdot  \alpha|_{X_T}    \wedge \pi_2^* \mu_{X_{T_2}} 
\]
of top-degree forms on $X_T$. Taking the quotient of both sides by $ \pi_2^* \mu_{X_{T_2}}$ yields \eqref{alpha money}, completing the proof.
\end{proof}

\begin{proof} [Proof of Theorem \ref{thm:reduction formula}]
Recall from \eqref{auxiliary fiber form} the top-degree form 
\[
\nu_z   \in H^0\big(  X_{T_1}(z^\perp) ,  \Omega^{ \mathrm{top} }  _{  X_{T_1}(z^\perp)  }  \big) 
\]
associated to any $z \in X_{T_2}(F)$.

 By  Lemma \ref{lem:first nu} we have
\[
  \frac{\mu_{X_T} }{  \pi_2^*\mu_{ X_{T_2} } }  \Big|_{X_{T_1}( z^\perp) }  = (-1)^{d_1 d_2}  \cdot \nu_z ,
\]
and the  sign discrepancy $(-1)^{d_1 d_2}$ disappears when one passes to the associated measures on the $F$-analytic manifold $X_{T_1}(z^\perp)(F)$.  The   quotient form on the left hand side  is the one appearing in the   Fubini-style equality of   \cite[Theorem 7.6.1]{Igusa}  for integration along the fibers of the map $\pi_2$ in  \eqref{factor double fiber}, and therefore
\[
\int_{X_T(F)} f(x) \, d\mu_{X_T}(x) 
= 
\int_{ X_{T_2}(F) }  
\left(  \int_{X_{T_1}( z^\perp) (F) }  f(y) \, d\nu_z (y) \right) \, d\mu_{X_{T_2}}  (z) 
\]
for any locally constant compactly supported function $f: X_T(F) \to \C$. 
In the inner integral we  are identifying each point  $y \in X_{T_1}( z^\perp) (F)$  with its image under the morphism  \eqref{zinc}.  Thus $f(y)$ really means the value of $f$ at  the concatenated tuple $(y,z) \in X_T(F)$.

We apply the above equality of integrals with $f$ equal to the  characteristic function of the compact open subset $C\cap X_T(F)$.  
Directly from Definition \ref{def:density}, we have 
\[
\Den_T(C) 
= 
\frac{\mu_Y( \Herm_d(\co_{F'})^\vee )  }    {  \mu_X(C) }  
\int_{X_T(F)} f(x) \, d\mu_{X_T}(x) .
\]
For  our choice of $f$,  we have $f(y)=1$ if and only if  the concatenated tuple $(y,z) \in V^d$ lies in $C$, which is equivalent to $y \in C_{z^\perp}$.  Thus the inner integral in the iterated integral above is 
\[
 \int_{X_{T_1}( z^\perp) (F) }  f(y) \, d\nu_z (y) 
 = 
  \nu_z \big( C_{z^\perp} \cap X_{T_1}(z^\perp)(F) \big) .
\]
Putting this all together, we find
\[
\Den_T(C) 
= 
\frac{\mu_Y( \Herm_d(\co_{F'})^\vee )  }    {  \mu_X(C) }  
\int_{ X_{T_2}(F) }  
   \nu_z \big( C_{z^\perp} \cap X_{T_1}(z^\perp)(F) \big)    \, d\mu_{X_{T_2}}  (z)   .  
\]

On the other hand, by Lemma \ref{lem:second nu}  we have the equality 
\[
  \frac{ \mu_{X_1 (z^\perp)^\mathrm{ns} } }{ \tau_1^* \mu_{Y_1^\mathrm{ns}}  }  \Big|_{X_{T_1}( z^\perp) }   = \nu_z 
\] 
of top-degree forms on $X_{T_1}( z^\perp)$. 
This quotient form  is exactly the one whose associated measure was used in  Definition \ref{def:density} to define the representation density $\Den_{T_1}(C_{z^\perp})$, and rearranging that definition gives the equality
\[
 \nu_z \big (    C_{z^\perp} \cap  X_{T_1}( z^\perp) ( F)  \big) 
 =
\frac{  \Den_{T_1}(C_{z^\perp}) \cdot   \mu_{X_1(z^\perp)}(C_{z^\perp}) } { \mu_{Y_1}(  \Herm_{d_1}(\co_{F'})^\vee  ) }
\]
(if $C_{z^\perp} = \emptyset$ then both sides are $0$). 
Substituting this expression into the integral above proves Theorem \ref{thm:reduction formula}.
\end{proof}

%%%%%%%%%%%%%%%%%%%%%

\subsection{A special case of the reduction formula}

%%%%%%%%%%%%%%%%%%%%

We state the special case of Theorem \ref{thm:reduction formula} that will be used in the calculations of \S \ref{s:density formulas}.  As in the previous subsection,   fix 
a block diagonal Hermitian matrix 
\[
T  = \begin{pmatrix}  T_1 & \\ & T_2  \end{pmatrix} \in \Herm_d (F')
\]
with each $T_i \in \Herm_{d_i}( F')$  nonsingular.
Assume  further that $T_2$ satisfies $\det(T_2) \in \co_{F}^\times$,  that  $F'/F$ is unramified, and that the residue characteristic of $F$ is not $2$.

The following proposition generalizes \cite[Proposition 9.3]{KR1}, which proves the same statement in the special case $N_1=N_2$.
%The analogous formula for  quadratic spaces is  \cite[Corollary 5.6.3]{Kitaoka}.

\begin{proposition}\label{prop:reduction}
Suppose  we are given $\co_{F'}$-lattices $N_2 \subset N_1$ in $V$ on which the Hermitian form is $\co_{F'}$-valued.  If there exists a tuple  
\[
z=(z_1,\ldots, z_{d_2}) \in N_2^{d_2}
\]
 with moment matrix $T_2$, then 
the isometry class of the $\co_{F'}$-Hermitian lattice
\[
N_1(z^\perp)  = \{ y \in N_1 : h(y,z_i)=0,  \forall  \, 1 \le i \le d_2 \}
\]
depends only on $N_1$ and $T_2$ (not on the particular tuple $z$), and  
\[
\Den_T (N_1^{d_1} \times N_2^{d_2}) =  \Den_{T_1}( N_1(z^\perp) ^{d_1} ) \cdot \Den_{T_2}(N_2^{d_2}).
\]
If no such $z$ exists then  $\Den_T (N_1^{d_1} \times N_2^{d_2}) =0 $.
\end{proposition}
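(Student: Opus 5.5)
The plan is to apply Theorem \ref{thm:reduction formula} with $C = N_1^{d_1}\times N_2^{d_2}$. For this product set the fiber is
\[
C_{z^\perp} = \{ y\in V(z^\perp)^{d_1} : (y,z)\in C\},
\]
which is empty unless $z\in N_2^{d_2}$, and equals $N_1(z^\perp)^{d_1}$ when $z \in N_2^{d_2}$. So the integral in the theorem only runs over $z\in X_{T_2}(F)\cap N_2^{d_2}$, which is compact open in $X_{T_2}(F)$. If no such $z$ exists, the integrand vanishes identically and $\Den_T(C)=0$.

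\textbf{Step 1: independence of $z$.} I first show that the integrand is constant in $z$. Since $\det T_2\in\co_F^\times$, the span $L_z=\co_{F'}z_1+\cdots+\co_{F'}z_{d_2}\subset N_2\subset N_1$ is a unimodular lattice. Because $N_1$ is integral, $L_z$ splits off orthogonally:
\[
N_1 = L_z\oplus N_1(z^\perp).
\]
The splitting uses the projection $y\mapsto \sum_{i,j} h(y,z_j)\,(T_2^{-1})_{ji}\, z_i$, which lands in $L_z$ because $T_2^{-1}$ is integral and the pairings $h(y,z_j)$ lie in $\co_{F'}$. The same argument gives $V = V(z)\oplus V(z^\perp)$, compatibly with this splitting.

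Now take two tuples $z,z'$. The lattices $L_z$ and $L_{z'}$ are isometric, since both have Gram matrix $T_2$. By Witt cancellation for Hermitian lattices over an unramified extension with $p$ odd (unimodular components cancel; this follows from Jacobowitz's classification), we get $N_1(z^\perp)\iso N_1(z'^\perp)$. This proves the first assertion.

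\textbf{Step 2: constancy of the measure factor.} The representation density $\Den_{T_1}(N_1(z^\perp)^{d_1})$, computed in $V(z^\perp)$, is a classical density, so it depends only on the isometry class and is therefore constant in $z$. It remains to show that $\mu_{X_1(z^\perp)}(N_1(z^\perp)^{d_1})$ is constant.

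Apply \eqref{X_1(z) quotient measure} with $A=N_1(z^\perp)^{d_1}$ and $B=L_z^{d_1}$. Then $A\times B=N_1^{d_1}$, and
\[
h_z(B)=\mathrm{Mat}_{d_1\times d_2}(\co_{F'})\,{}^t\!\sigma(T_2)\ \text{or similar},
\]
which equals $\mathrm{Mat}_{d_1\times d_2}(\co_{F'})$ because $T_2$ is unimodular. Hence
\[
\mu_{X_1(z^\perp)}(A)=\frac{\mu_{X_1}(N_1^{d_1})}{\mu_M(\mathrm{Mat}(\co_{F'}))},
\]
which is independent of $z$.

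\textbf{Step 3: assembling the formula.} The integral now becomes
\[
\Den_{T_1}(N_1(z^\perp)^{d_1})\cdot\frac{\mu_{X_1}(N_1^{d_1})}{\mu_M(\mathrm{Mat}_{d_1\times d_2}(\co_{F'}))}\cdot \mu_{X_{T_2}}(N_2^{d_2}\cap X_{T_2}(F)).
\]
By Definition \ref{def:density},
\[
\mu_{X_{T_2}}(N_2^{d_2}\cap X_{T_2}(F)) = \Den_{T_2}(N_2^{d_2})\,\frac{\mu_{X_2}(N_2^{d_2})}{\mu_{Y_2}(\Herm_{d_2}(\co_{F'})^\vee)}.
\]
Since $\mu_X(C)=\mu_{X_1}(N_1^{d_1})\,\mu_{X_2}(N_2^{d_2})$, the remaining constant is
\[
\frac{\mu_Y(\Herm_d(\co_{F'})^\vee)}{\mu_{Y_1}(\Herm_{d_1}(\co_{F'})^\vee)\,\mu_{Y_2}(\Herm_{d_2}(\co_{F'})^\vee)\,\mu_M(\mathrm{Mat}_{d_1\times d_2}(\co_{F'}))}.
\]
This equals $1$ because, with $F'/F$ unramified and $p$ odd, $\Herm_d(\co_{F'})^\vee=\Herm_d(\co_{F'})$. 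Indeed, the trace form pairs off-diagonal entries via $\mathrm{Tr}_{F'/F}$, which is perfect on $\co_{F'}$. So the dual decomposes as $\Herm_{d_1}^\vee\times\Herm_{d_2}^\vee\times\mathrm{Mat}(\co_{F'})$ under \eqref{Ysplit}, and $\mu_Y=\mu_{Y_1}\boxtimes\mu_{Y_2}\boxtimes\mu_M$. This yields the stated product formula.

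\textbf{Main obstacle.} I expect the main obstacle to be the cancellation step in Step 1, namely justifying that an isometry $L_z\iso L_{z'}$ of unimodular sublattices forces $N_1(z^\perp)\iso N_1(z'^\perp)$. I would first try to extend the isometry $L_z\to L_{z'}$ to an isometry of $N_1$, via Witt's theorem for integral Hermitian lattices over unramified extensions, or via Jordan decompositions: strip off the unimodular Jordan component and compare ranks and determinants. The measure-normalization bookkeeping in Steps 2 and 3 should be routine.
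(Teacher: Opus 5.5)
Your proposal is correct and follows the paper's proof essentially step for step. Like the paper, you apply Theorem \ref{thm:reduction formula} to $C=N_1^{d_1}\times N_2^{d_2}$, split off the self-dual span of $z$ and use Witt cancellation (via Jacobowitz) to get independence of $z$, evaluate $\mu_{X_1(z^\perp)}$ through \eqref{X_1(z) quotient measure} with $B$ the span of $z$, and use $\Herm_r(\co_{F'})^\vee=\Herm_r(\co_{F'})$ to see that the leftover volume constant is $1$. To settle your ``or similar'': linearity in the first variable gives $h_z(B)=\mathrm{Mat}_{d_1\times d_2}(\co_{F'})\cdot T_2$, and this equals $\mathrm{Mat}_{d_1\times d_2}(\co_{F'})$ because $T_2\in\GL_{d_2}(\co_{F'})$.
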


\begin{proof}
We  apply Theorem \ref{thm:reduction formula} with 
\[
C  = N_1^{d_1} \times N_2^{d_2}  \subset V^d.
\]
To understand the integral on the right-hand side of the theorem, 
suppose we have a  point $z \in X_{T_2}(F)$.  In other words, suppose   $z \in V^{d_2}$ is a tuple with moment matrix $T_2$.   In the notation of Theorem \ref{thm:reduction formula}, the subset $C_{z^\perp} \subset V^{d_1}$ is 
\begin{equation*}
C_{z^\perp} =  
\begin{cases}
N_1(z^\perp)^{d_1}  & \mbox{if } z \in  N_2^{d_2}, \\
\emptyset & \mbox{ otherwise.}
\end{cases}
\end{equation*}
Here $N_i (z^\perp) = N_i \cap V(z^\perp)$ is the $\co_{F'}$-submodule of vectors in the lattice $N_i \subset V$ that are orthogonal to all components of the tuple $z$.

If $z \in  N_2^{d_2}$, then  the moment matrix $\tau(z)=T_2$ has integral entries and unit determinant, and so 
the submodule $N_2(z) \subset N_2$ generated by $z_1,\ldots, z_{d_2}$ is a self-dual Hermitian lattice.
It follows that $N_2(z)$ is an orthogonal direct summand of any Hermitian lattice into which it embeds, giving us  orthogonal decompositions
\begin{align}\label{N_1 splitting}
N_1 & =  N_1(z^\perp)  \oplus  N_2(z)   \\
N_2  & = N_2(z^\perp)   \oplus N_2(z)   .  \nonumber 
\end{align}
 The isometry class of $N_2(z)$   depends on the matrix $T_2$, not on the particular tuple $z$.
 The same independence from $z$ therefore holds for  $N_1(z^\perp)$ and $N_2(z^\perp)$, because Witt's cancellation theorem holds for Hermitian $\co_{F'}$-lattices.
This integral version of Witt's cancellation theorem is a consequence (this is the reason we assume $\mathrm{char}(F) \neq 2$) of Jacobowitz's classification of Hermitian lattices over local fields \cite{Jacobowitz}.
See  \cite[\S 2]{Yu} for a short summary of Jacobowitz's results.

 If there is no tuple in $N_2^{d_2}$ with moment matrix $T_2$, then there is no tuple in $N_1^{d_1} \times N_2^{d_2} \subset V^d$   with moment matrix $T$.  
Hence  $ \Den_T( N_1^{d_1} \times N_2^{d_2} )  =0$  by Remark \ref{rem:no represent}, and we are done.

Henceforth we assume there is at least one $z \in N_2^{d_2}$ with moment matrix $T_2$. 
By what we have said, the representation density $ \Den_{T_1}(  N_1(z^\perp)^{d_1}  )$ is independent of $z$.
Factoring this out of the integral in  Theorem \ref{thm:reduction formula}  leaves us with 
\begin{align}\label{density splitting integral}
 \Den_T( C)    & = 
 \frac{  \Den_{T_1}(  N_1(z^\perp)^{d_1}  ) }{ \mu_X( C) } \cdot 
 \frac{ \mu_Y( \Herm_d(\co_{F'}) )  } {  \mu_{Y_1}(  \Herm_{d_1}(\co_{F'})  )  }   \\
 & \quad \times
 \int_{ X_{T_2}(F) } 
   \bm{1}_{ N_2^{d_2} }(z)   \cdot  \mu_{X_1 (z^\perp) } ( N_1(z^\perp)^{d_1}  )  
 \,  d \mu_{X_{T_2}}  (z) . \nonumber
\end{align}
Here $\bm{1}_{ N_2^{d_2} }$  is the characteristic function of $N_2^{d_2} \subset V^{d_2} = X_2(F)$.
Note that we have used the equality
\[
\Herm_r(\co_{F'})    = \Herm_r(\co_{F'})  ^\vee ,
\]
which holds by our assumption that $F'/F$ is unramified.

Now recall the normalization of Haar measure $\mu_{X_1 (z^\perp) }$ on $ V( z^\perp)^{d_1}$.  
By \eqref{X_1(z) quotient measure}, we have
\[
\mu_{X_1 (z^\perp) } ( N_1(z^\perp)^{d_1}  )  
= \frac{\mu_{X_1  } ( N_1(z^\perp)^{d_1}  \times N_2(z)^{d_1} )  }{\mu_M(h_z(N_2(z)^{d_1} ))} .
\]
By \eqref{N_1 splitting}, the numerator on the right-hand side is 
\[
\mu_{X_1  } ( N_1(z^\perp)^{d_1}  \times N_2(z)^{d_1} ) 
=
\mu_{X_1  } ( N_1^{d_1}  ) 
= 
\frac{ \mu_X( C)  }{  \mu_{X_2}(  N_2^{d_2} )   } .
\]
Because $N_2(z)$ is a self-dual Hermitian lattice spanned by the components of $z$, the isomorphism \eqref{h_z iso} restricts to a bijection 
\[
N_2(z)^{d_1} \iso \mathrm{Mat}_{d_1\times d_2}( \co_{F'} )  \subset M(F) ,
\]
and so  the denominator is equal to
\begin{align*}
\mu_M(h_z(N_2(z)^{d_1} )) 
& = \mu_M(    \mathrm{Mat}_{d_1\times d_2}( \co_{F'} )  )  \\
& = \frac{ \mu_Y( \Herm_d(\co_{F'}) )  } {  \mu_{Y_1}(  \Herm_{d_1}(\co_{F'})  )   \cdot    \mu_{Y_2}(  \Herm_{d_2}(\co_{F'})  )  } .
\end{align*}
Combining all of this and plugging it back into the integral in  \eqref{density splitting integral} leaves
\begin{align*}
 \Den_T( C)    & = 
 \Den_{T_1}(  N_1(z^\perp)^{d_1}  )    \\
 & \quad \times
 \frac{    \mu_{Y_2}(  \Herm_{d_2}(\co_{F'})  )     }{  \mu_{X_2}(  N_2^{d_2} )   }  
 \int_{ X_{T_2}(F) }    \bm{1}_{ N_2^{d_2} }(z)    \,  d \mu_{X_{T_2}}  (z)  \\
 & =  \Den_{T_1}(  N_1(z^\perp)^{d_1}  )   \cdot \Den_{T_2}(N_2^{d_2}) ,
\end{align*}
where the second equality is directly from  Definition \ref{def:density}.
\end{proof}

%%%%%%%%%%%%%%%%%%%%%%%%%%%

\section{Explicit densities for $2\times 2$ matrices}
\label{s:density formulas}

%%%%%%%%%%%%%%%%%%%%%%%%%%%%

Throughout   \S \ref{s:density formulas} we assume that $F'/F$ is an unramified  quadratic extension of finite extensions of $\Q_p$, with $p$ odd.
Let   $(V,h)$ be a finite dimensional nondegenerate Hermitian space over $F'$.

Given $\co_{F'}$-lattices  $M,N \subset V$,  our goal is to find explicit formulas for the representation densities of $2\times 2$ matrices with respect to the  compact open subset   $M \times N \subset V^2$, at least in special cases.   
The main results are Corollaries \ref{cor:main 2x2} and \ref{cor:poly 2x2}.

%%%%%%%%%%%%%%%%%%%%%%%%%%%%%%%

\subsection{Some classical  representation densities}

%%%%%%%%%%%%%%%%%%%%%%%%%%%%%%%

Denote by  $k'/k$  the extension of residue fields of $F'/F$, so that $k$ and $k'$ have cardinality $q$ and $q^2$, respectively.
We prove some formulas for  classical representation densities that will be needed in the calculations of the following subsections.  There is nothing here that cannot be deduced from results of Nagaoka  \cite{Nagaoka} or Hironaka  \cite{Hir}.

Given a (possibly degenerate) finite dimensional Hermitian space $A$  over $k'$,   and  a Hermitian matrix $T\in \Herm_d(k')$, set
\[
\mathrm{Rep}_T(A) = \{ a \in A^d :  \tau(a) = T \} .
\]
Here $\tau(a) \in \Herm_d(k')$ is the moment matrix as in \eqref{moment matrix}.

\begin{lemma}\label{lem:finite hermitian reps}
In the notation above, if   $T\in \Herm_d(k')$ is nonsingular and $d \le \dim_{k'}(A)$, then 
\[
\# \mathrm{Rep}_T(A)=  q^{2d\dim_{k'}(A) -d^2}  \prod_{i=1}^d  \left( 1 - (-q^{-1})^{\dim_{k'}( A / A_\mathrm{rad}  )  -i+1} \right)
\]
where $A_\mathrm{rad} \subset A$ is  the radical of the Hermitian form on $A$.
\end{lemma}

\begin{proof}
See the proof of \cite[Theorem 3.5.1]{LZ1}.
%This is elementary linear algebra. 
%By noting that there is  an orthogonal decomposition $A=A_\mathrm{red}\oplus (A/A_\mathrm{red})$,  one can easily reduce to the case where $A$ is nondegenerate.
%Up to isomorphism, there is a unique nondegenerate Hermitian space over $k'$ of any  dimension $D$.  Call it $A_D$, and abbreviate $\rho_T(D) = \# \mathrm{Rep}_T(A_D)$. 
%
%For any  $t\in \Herm_1(k')=k$, one has the trivial calculation
%\[
%\rho_t(1) = \begin{cases}
%1 &\mbox{if }t=0 \\
%q+1 & \mbox{if }t\neq 0.
%\end{cases}
%\]
%Using the recursion relation 
%\[
%\rho_t(D+1)= \sum_{a+b=t} \rho_a(1)\rho_b(D) ,
%\]
%we find that
%\begin{align*}
%\rho_t(D)  = 
%q^{2D-1} \left( 1 - (-q^{-1})^D \right)  + 
%\begin{cases}
%0   & \mbox{if } t\neq 0 \\
% (-q)^D    & \mbox{if } t = 0 .
%\end{cases}
%\end{align*}
%This proves the desired formula  when $d=1$
%
%To compute $\rho_T(D)$ for a general nonsingular $T\in \Herm_d(k')$, one  reduces to the case in which  $T=\mathrm{diag}(t_1,\ldots, t_d)$ is a diagonal matrix, and proves 
%\[
%\rho_T(D)   = \prod_{i=1}^d \rho_{t_i}(D-i+1 ) .
%\]
%Using this, the  general case follows from the $d=1$ case.
\end{proof}

\begin{proposition}\label{prop:nonsingular T density}
Suppose $M\subset V$ is an $\co_{F'}$-lattice on which the Hermitian form is $\co_{F'}$-valued.
 For any  $ T \in \Herm_d(\co_{F'})$ with $\det(T) \in \co_F^\times$ we have
\[
\Den_T(M^d) =      \prod_{i=\epsilon}^{d+\epsilon-1}   ( 1 - (-q^{-1})^{ \dim_{F'}(V) -i  } )  ,
\]
where $\epsilon$ is the $k'$-dimension of the radical of the Hermitian form on the reduction $M_k=M / \pi M$.
\end{proposition}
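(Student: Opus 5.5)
Since $\det(T)\in\co_F^\times$, the natural approach is a Hensel/smoothness argument reducing $\Den_T(M^d)$ to a point count over the residue field, via Proposition \ref{prop:integral densities}, or equivalently via the classical counting definition in Remark \ref{rem:classical density}.

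\textbf{Step 1: reduction to counting.} By Remark \ref{rem:classical density}, $\Den_T(M^d)=q^{-kd(2n-d)}\#A_{\pi^k}(M,T)$ for $k\gg0$, where $n=\dim_{F'}V$. Since $F'/F$ is unramified, $\Herm_d(\co_{F'})^\vee=\Herm_d(\co_{F'})$. I claim the count stabilizes already at $k=1$, in the sense that $\#A_{\pi^{k+1}}=q^{d(2n-d)}\#A_{\pi^k}$ for all $k\ge1$. Take $x\in M^d$ with $\tau(x)\equiv T \bmod \pi^k$. Then $\tau(x)$ mod $\pi$ is nonsingular, so the reductions $\bar x_i\in M_k$ span a $k'$-space of dimension $d$ on which the form is nondegenerate. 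Lifts of $x$ mod $\pi^{k+1}$ have the form $x+\pi^k v$ with $v\in (M/\pi M)^d$, and
\[
\tau(x+\pi^k v)\equiv \tau(x)+\pi^k\bigl(h(x_i,v_j)+h(v_i,x_j)\bigr)\pmod{\pi^{k+1}},
\]
using $2k\ge k+1$. As in the proof of Lemma \ref{lem:tangent moment}, the map $v\mapsto (h(\bar x_i,v_j)+h(v_i,\bar x_j))$ from $M_k^d$ to $\Herm_d(k')$ is surjective. Surjectivity of $v\mapsto (h(\bar x_i,v_j))$ onto $\mathrm{Mat}_d(k')$ uses that $\bar x_1,\ldots,\bar x_d$ are linearly independent modulo the radical of $M_k$; this holds because their Gram matrix is nonsingular. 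Each fiber therefore has $q^{2nd}/q^{d^2}$ elements, which gives the recursion. Hence
\[
\Den_T(M^d)=q^{-d(2n-d)}\,\#\mathrm{Rep}_{\bar T}(M_k).
\]

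\textbf{Step 2: apply Lemma \ref{lem:finite hermitian reps}.} Take $A=M_k$, so $\dim_{k'}A=n$ and $\dim_{k'}(A/A_{\mathrm{rad}})=n-\epsilon$. If $d\le n$, the lemma gives
\[
q^{2dn-d^2}\prod_{i=1}^d\bigl(1-(-q^{-1})^{n-\epsilon-i+1}\bigr).
\]
Reindexing with $j=\epsilon+i-1$ turns this into $\prod_{j=\epsilon}^{d+\epsilon-1}(1-(-q^{-1})^{n-j})$, which is the claimed formula after dividing by $q^{d(2n-d)}$.

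\textbf{Edge cases.} Lemma \ref{lem:finite hermitian reps} assumes $d\le\dim A$, and there is a further condition $d\le n-\epsilon$ needed for $\mathrm{Rep}$ to be nonempty. If $d>n-\epsilon$, then $\mathrm{Rep}_{\bar T}(M_k)$ is empty, since $d$ vectors with nonsingular Gram matrix must inject into $A/A_{\mathrm{rad}}$. On the formula side, the factor with $j=n$ lies in the range $\epsilon\le n\le d+\epsilon-1$ and equals $1-1=0$, so both sides vanish. I should double-check that the lemma's product formula also covers this case consistently, or simply treat it separately as above.

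The main obstacle is Step 1, namely the Hensel-type stabilization. It can alternatively be phrased through Proposition \ref{prop:integral densities}, taking $\mathcal{X}=M^d$, $\mathcal{Y}=\Herm_d(\co_{F'})$, and $\mathcal U$ the locus where $\tau$ is smooth, which contains every point lying over $T$. The only subtle point is the surjectivity argument in the presence of a radical; I handle it as indicated above.
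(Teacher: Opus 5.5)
Your proof is correct and is essentially the paper's argument. The paper applies Proposition \ref{prop:integral densities} to the models $\mathcal{X}=M^d$ and $\mathcal{Y}=\Herm_d(\co_{F'})$, with $\mathcal{U}$ the locus of invertible moment matrix, to get $\Den_T(M^d)=\#\mathrm{Rep}_{\bar T}(M_k)\cdot q^{d^2}/q^{2d\dim_{F'}V}$, and then invokes Lemma \ref{lem:finite hermitian reps}; your explicit lifting recursion through Remark \ref{rem:classical density} is a hands-on version of the same tangent-space surjectivity (Hensel) step. Your separate treatment of $d>\dim_{F'}V-\epsilon$, where both sides vanish, adds some care the paper omits: its proof does not address $d>\dim_{F'}V$, where Lemma \ref{lem:finite hermitian reps} does not literally apply.
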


\begin{proof}
We apply  Proposition \ref{prop:integral densities}, taking  our integral models of the $F$-schemes $X=V^d$ and $Y=\Herm_d(F')$ to be the obvious ones, whose functors of points  are
\[
\mathcal{X}(R)= ( M \otimes_{\co_F} R)^d \quad \mbox{and} \quad \mathcal{Y}(R) = \Herm_d(\co_{F'} \otimes_{\co_F} R)
\]
for any $\co_F$-algebra $R$.
If we denote by   $\mathcal{U} \subset \mathcal{X}$ the  Zariski open subset of  tuples with invertible moment matrix,   the same proof as in  Lemma \ref{lem:tangent moment} proves the smoothness of  the moment morphism $\tau : \mathcal{X} \to \mathcal{Y}$ when restricted to  $\mathcal{U}$.

Taking $C=\mathcal{X}(\co_F)$ in  Proposition \ref{prop:integral densities}, we find that
\[
\Den_T(M^d) 
 =  \#  \mathcal{U}_T( k )  
\cdot
\frac{ \# \mathcal{Y}( k )   }{  \# \mathcal{X}(k)   }  .
\]
The cardinality
$
 \#  \mathcal{U}_T( k ) =  \# \mathrm{Rep}_T(M_k)
$
is known by Lemma \ref{lem:finite hermitian reps}, and of course
$\#\mathcal{Y}(k)=q^{d^2}$ and $\#\mathcal{X}(k)=q^{2d \dim_{F'}(V) }$.
The claim follows by combining  these formulas.
\end{proof}

\begin{proposition}\label{prop:vector density}
Suppose  $M \subset V$ is an $\co_{F'}$-lattice that is self-dual under the Hermitian form.
For any $t\in  \co_F=\Herm_1(\co_{F'})$ we have 
\[
\frac{ \Den_t(M)  }{  \Den_1(M)   } =  \sum_{i=0}^{\ord(t)} (-1)^i   ( - q^{-1})^{  i ( \dim_{F'}(V)  -1 ) }  ,
\]
where 
\[
 \Den_1(M)   =    1  -  (-q^{-1})^{  \dim_{F'}(V)  } . 
\]
\end{proposition}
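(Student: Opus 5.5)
We are asked to prove the $d=1$ formula for a self-dual lattice $M$ of rank $n=\dim_{F'}(V)$. The value $\Den_1(M)$ is immediate from Proposition \ref{prop:nonsingular T density} with $d=1$: a self-dual $M$ has $\epsilon=0$, so the product has the single factor $1-(-q^{-1})^{n}$. For general $t$, write $t=\pi^a u$ with $a=\ord(t)$ and $u\in\co_F^\times$. Since $\mathrm{Nm}(\co_{F'}^\times)=\co_F^\times$ in the unramified case, Proposition \ref{prop:density invariance} with $g=\alpha\in\co_{F'}^\times$ and $Mg=M$ shows $\Den_t(M)$ depends only on $a$. So it suffices to set $\delta(a)=\Den_{\pi^a}(M)$ and prove $\delta(a)/\delta(0)=\sum_{i=0}^{a}(-1)^i(-q^{-1})^{i(n-1)}$ by induction on $a$. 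The base case $a=0$ is trivial.

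The plan is to decompose $M$ by the reduction of $x$. Any $x\in M$ with $h(x,x)=t$ either lies in $\pi M$ or is primitive. By Remark \ref{rem:general disjoint}, write $M=\pi M\sqcup(M\smallsetminus\pi M)$, so that
\[
\Den_t(M)=q^{-2n}\Den_t(\pi M)+\Den_t(M\smallsetminus \pi M)\cdot(1-q^{-2n}).
\]
For the first term, Proposition \ref{prop:density invariance} with $g=\pi$ gives $\Den_{\pi^2 s}(\pi M)=\Den_s(M)\,q^{2\ord(\pi^2)}\cdots$. More precisely, $\Delta=\pi^2$ and $d=1$, so $\Den_{t}(\pi M)=q^{2}\Den_{t/\pi^2}(M)$ when $a\ge2$, and $\Den_t(\pi M)=0$ when $a\le1$ by Remark \ref{rem:no represent}.

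For the primitive term, apply Proposition \ref{prop:integral densities} with $\mathcal{X}$ the open subscheme of the affine space of $M$ consisting of vectors nonzero mod $\pi$, and $\mathcal{Y}=\mathbb{A}^1$. The moment map $x\mapsto h(x,x)$ has differential $v\mapsto \mathrm{Tr}\,h(x,v)$, which is surjective on the special fiber at any $x\not\equiv0$ because $M_k$ is nondegenerate. Hence $\mathcal U=\mathcal X$ is smooth over $\mathcal Y$, and $\Den_t(M\smallsetminus\pi M)$ equals $\#\{x\in M_k\smallsetminus 0: h(x,x)=\bar t\}\cdot q/(q^{2n}-1)$. By the standard count (Lemma \ref{lem:finite hermitian reps} for $\bar t\neq0$, and the analogous elementary count $q^{2n-1}-(-1)^{n}q^{n-1}\cdots$ of nonzero isotropic vectors for $\bar t=0$), this depends only on whether $a=0$ or $a\ge1$.

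These give the recursion $\delta(a)=q^{-2n+2}\delta(a-2)+c_1$ for $a\ge2$, with $c_1$ the primitive contribution for $\bar t=0$; the cases $a=0,1$ are given directly. The final step is to check that the claimed closed form satisfies the recursion and initial values. Write $x=(-q^{-1})^{n-1}$, so the claimed sum is $\sum_{i=0}^{a}(-x)^i$ and $q^{-2n+2}=x^2$. Then the recursion reduces to $\delta(0)\bigl(1-x+\cdots\bigr)$, which holds provided $c_1=\delta(0)(1-x)$ and $\delta(1)=\delta(0)(1-x)$. These are the identities I expect to be the main (though routine) obstacle. They require the correct isotropic count $\#\{x\neq0:h(x,x)=0\}=q^{2n-1}+(-1)^n(q^{n}-q^{n-1})-1$ and careful bookkeeping of the factor $q/(q^{2n}-1)$ against $1-q^{-2n}$.
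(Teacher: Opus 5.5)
Your proof is correct, but it takes a genuinely different route from the paper. The paper obtains $\Den_1(M)$ from the $d=1$ case of Proposition \ref{prop:nonsingular T density}, as you do, and then simply quotes the ratio $\Den_t(M)/\Den_1(M)$ from \cite[Example 3.5.2]{LZ1}.

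You instead derive the ratio inside the paper's own framework. You split $M=\pi M\sqcup(M\smallsetminus\pi M)$ and treat the two pieces separately:
\begin{itemize}
\item The imprimitive piece comes from the case $g=\pi$ of Proposition \ref{prop:density invariance}. It contributes $q^{-2n}\cdot q^{2}\,\delta(a-2)$. The correct factor is $q^{d\,\ord_F(\pi^2)}=q^2$; discard your first stray exponent $q^{2\ord(\pi^2)}$.
\item The primitive piece comes from Proposition \ref{prop:integral densities} applied to the open integral model. Combining $q/(q^{2n}-1)$ with $1-q^{-2n}$, it contributes $q^{1-2n}\cdot\#\{x\in M_k\smallsetminus 0 : h(x,x)=\bar t\}$.
\end{itemize}

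The identities you flagged as the main obstacle do hold. Put $y=-q^{-1}$ and $x=y^{n-1}$. Then $q^{1-2n}\bigl(q^{2n-1}+(-1)^n(q^n-q^{n-1})-1\bigr)=(1-y^n)(1-y^{n-1})=\delta(0)(1-x)$. Also $\sum_{i=0}^{a}(-x)^i=x^2\sum_{i=0}^{a-2}(-x)^i+(1-x)$. So both initial values and the recursion close.

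One small correction: $\tau$ is not smooth on all of $\mathcal{X}$. The generic fiber $\mathcal{X}_F=V$ contains the origin, where the differential vanishes. Take $\mathcal{U}=\mathcal{X}\smallsetminus\{0\}$ instead. This point is closed in $\mathcal{X}$ once the special-fiber origin has been removed, and nothing else changes, since $\mathcal{X}_t\subset\mathcal{U}$ for $t\neq 0$.

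What your route buys is self-containment. It shows that the paper's own machinery (smooth integral models plus $\GL_d(F')$-invariance) recovers this classical formula, with only an elementary finite-field isotropic count as outside input. The paper's route is shorter but rests on external literature.
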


\begin{proof}
The first formula is   \cite[Example 3.5.2]{LZ1} in the case of a $1\times 1$ Hermitian matrix.
The second is the $1\times 1$ case of Proposition \ref{prop:nonsingular T density}.
\end{proof}

%%%%%%%%%%%%%%%%%%%%%%%%%%%%%%%

\subsection{A preliminary density formula}

%%%%%%%%%%%%%%%%%%%%%%%%%%%%%%%

Fix $\co_{F'}$-lattices $M,N \subset V$ on which the Hermitian form is $\co_{F'}$-valued, and  a nonsingular Hermitian matrix
\[
T = \begin{pmatrix} t_1 & * \\ *  & t_2  \end{pmatrix}  \in \Herm_2(\co_{F'}) .
\]
We would like to understand the representation density of $T$ with respect to the compact open subset $M\times N \subset V^2$.
This seems quite hard in general, but Proposition \ref{prop:main 2x2} below provides  a reasonably explicit way to compute $\Den_T(M\times N)$  under the following assumption.

\begin{hypothesis}
Throughout this subsection  we assume:
\begin{enumerate}
\item
The lattice $M$ is self-dual under the Hermitian form.
\item
$t_1,t_2 \in \co_F^\times.$
\end{enumerate}
\end{hypothesis}

Recall from  \eqref{XandY} that $X=V^2$ , but viewed as an $F$-scheme.
The choices of $M$ and $N$ (now viewed as $\co_F$-lattices)  determine a smooth integral model of $X$, namely  $\mathcal{X} = M\times N$,  viewed as an $\co_F$-scheme.  More precisely, $\mathcal{X}$ is defined by its functor of points
\[
\mathcal{X}(R) = M_R \times N_R
\]
 for any $\co_F$-algebra $R$.

Recall from  \eqref{XandY}  that $Y=\Herm_2(F')$,  viewed as an $F$-scheme.  The $\co_F$-lattice $\Herm_2(\co_{F'})$ determines a smooth integral model of $Y$, but this is not the right thing to look at in this context, as the moment matrix of a pair $(x_1,x_2) \in M \times N$ need not  have integral entries. 

 Instead, fix a uniformizer $\pi \in \co_F$, let  $i =i (M,N)$ be the  integer characterized by 
\begin{equation}\label{idef}
\pi^i N \subset M  \quad \mbox{and}\quad \pi^{ i -1}  N \not\subset M ,
\end{equation}
and   let $\mathcal{Y}$ be the integral model of $Y$ determined by the $\co_F$-lattice 
\begin{equation}\label{denominator Y}
\left\{
 \begin{pmatrix}
t_1 &    \pi^{-i} s \\ 
\pi^{-i} s^\sigma & t_2
\end{pmatrix}  : t_1,t_2\in \co_F,\, s\in \co_{F'}
\right\}  \subset \Herm_2(F').
\end{equation}
More precisely,  define an $\co_F$-scheme $\mathcal{Y}$ with functor of points 
\[
\mathcal{Y} (R) = R \times R \times R'
\]
for any $\co_F$-algebra $R$.  When $R$ is an $F$-algebra we use 
\[
(t_1,t_2,s) \mapsto   \begin{pmatrix}
t_1 &    \pi^{-i} s \\ 
\pi^{-i} s^\sigma  & t_2
\end{pmatrix} 
\]
to identify $\mathcal{Y}(R) \iso Y(R)$.
This fixes an isomorphism of the generic fiber of $\mathcal{Y}$ with $Y$, in such a way that $\mathcal{Y}(\co_F) \subset Y(F)$ is identified with \eqref{denominator Y}.

Now recall the moment map $\tau : X \to Y$, which on $F$-points sends  $x=(x_1,x_2) \in V^2$ to its moment matrix $\tau(x) \in \Herm_2(F')$.
The moment map  admits a unique extension to 
\begin{equation}\label{technical integral moment}
\tau : \mathcal{X} \to \mathcal{Y}.
\end{equation}
Indeed, if we denote by 
\[
( x_2\mapsto \widetilde{x}_2  ) : N_R  \to  M_R 
\]
the $R'$-linear map induced by $\pi^i : N   \to  M $, the extension sends
$
(x_1,x_2) \in   \mathcal{X}(R)
$
 to the triple $(T_1,T_2,B) \in \mathcal{Y}(R)$ with $T_j = h(x_j,x_j)$ and 
$
B = h( x_1 , \widetilde{x}_2)  .
$

\begin{lemma}\label{lem:smoothness}
 Let $\kappa$ be an algebraically closed field with a ring map $\co_F \to \kappa$, and set $\kappa' = \kappa \otimes_{\co_F} \co_{F'}$.  If 
\[
x=(x_1,x_2) \in  M_\kappa \times N_\kappa=  \mathcal{X}(\kappa)
\]
is a geometric point of $\mathcal{X}$ such that
\begin{enumerate}
\item
the $\kappa'$-submodule of $M_\kappa$ generated by $x_1$ and $\widetilde{x}_2$ is free of rank $2$, and
\item
there is some $\nu \in N_\kappa$ such that $h(\nu , x_2) \in (\kappa')^\times$, 
\end{enumerate}
then the  moment morphism  induces a surjection  on tangent spaces
\[
T_{ \mathcal{X}, x} \to T_{ \mathcal{Y} , \tau(x) }.
\]
\end{lemma}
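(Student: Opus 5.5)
We will compute the differential of the extended moment morphism \eqref{technical integral moment} directly and imitate the factorization argument of Lemma \ref{lem:tangent moment}. Since $\mathcal{X}$ and $\mathcal{Y}$ are (schemes attached to) free $\co_F$-modules, the tangent spaces at $\kappa$-points are $T_{\mathcal{X},x} = M_\kappa \times N_\kappa$ and $T_{\mathcal{Y},\tau(x)} = \kappa\times\kappa\times\kappa'$. Differentiating the formulas $T_j = h(x_j,x_j)$ and $B = h(x_1,\widetilde{x}_2)$ gives the tangent map
\[
(v_1,v_2) \mapsto \big( h(x_1,v_1)+h(v_1,x_1),\; h(x_2,v_2)+h(v_2,x_2),\; h(v_1,\widetilde{x}_2) + h(x_1,\widetilde{v}_2) \big).
\]
Here the middle coordinate uses the $\kappa'$-valued Hermitian form on $N_\kappa$ induced from $N$, which is legitimate because $h$ is $\co_{F'}$-valued on $N$. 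We view $h$ on $M_\kappa$ as nondegenerate because $M$ is self-dual.

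Surjectivity will be proved one coordinate at a time, using triangularity.

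First, take $v_1=0$. Hypothesis (2) gives $\nu\in N_\kappa$ with $h(\nu,x_2)\in(\kappa')^\times$. Put $v_2 = c\nu$ with $c\in\kappa'$. The second coordinate becomes $\mathrm{Tr}_{\kappa'/\kappa}\big(c\, h(\nu,x_2)\big)$. Since the trace $\kappa'\to\kappa$ is surjective (as $F'/F$ is unramified, $\kappa'\cong\kappa\times\kappa$ or a separable extension), this hits every element of $\kappa$. So the image surjects onto the middle factor, up to whatever appears in the third coordinate.

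Next, restrict to $v_2 = 0$. The map $v_1 \mapsto \big(h(x_1,v_1)+h(v_1,x_1),\, h(v_1,\widetilde{x}_2)\big)$ is handled as in the proof of Lemma \ref{lem:tangent moment}. Hypothesis (1) says $x_1,\widetilde{x}_2$ span a free rank-$2$ submodule of the nondegenerate space $M_\kappa$. Therefore $v_1\mapsto (h(v_1,x_1),h(v_1,\widetilde{x}_2))\in(\kappa')^2$ is surjective: its kernel is the orthogonal complement, of corank $2$ by nondegeneracy. Composing with $(a,b)\mapsto(a+a^\sigma, b)$, which is surjective by surjectivity of the trace, shows that $v_1$ alone surjects onto $\kappa\times\{0\}\times\kappa'$.

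Combining the two steps, the image contains $\kappa\times 0\times\kappa'$ together with elements whose middle coordinate is arbitrary. Hence the image is everything.

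The main point to check carefully is the nondegeneracy and freeness bookkeeping over $\kappa'$ when $\kappa'\cong\kappa\times\kappa$, which happens when $\kappa$ is algebraically closed. In that case "free of rank 2" must be interpreted componentwise, and the dimension count for the kernel of $v_1\mapsto(h(v_1,x_1),h(v_1,\widetilde{x}_2))$ should be done exactly as in Lemma \ref{lem:tangent moment}. The mixed characteristic of $\kappa$ causes no trouble, since only linear algebra over $\kappa'$ and surjectivity of the trace are used.
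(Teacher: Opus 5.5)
Your proposal is correct and is essentially the paper's argument. The paper also factors the differential through $\kappa'\oplus\kappa'\oplus\kappa'$ followed by $(a,b,c)\mapsto(\mathrm{Tr}(a),\mathrm{Tr}(b),c)$, uses hypothesis (1) and self-duality of $M$ to get $\kappa'\oplus 0\oplus\kappa'$ from $M_\kappa\oplus 0$, and uses hypothesis (2) (with scaling, as in your $c\nu$) to hit the middle coordinate. The differences are cosmetic: the paper treats $\kappa\supseteq F$ separately by citing Lemma \ref{lem:tangent moment}, whereas your argument covers both cases uniformly, and your third coordinate $h(x_1,\widetilde{v}_2)$ is the correct form of what the paper writes as $h(x_1,\nu)$.
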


\begin{proof}
If the field $\kappa$ is an extension of  $F$,  this follows from Lemma \ref{lem:tangent moment}.
Thus  we may assume $\kappa$ is an extension of the residue field $k=\co_F/\pi \co_F$.

The map on tangent spaces
\[
M_\kappa \oplus N_\kappa  = T_{ \mathcal{X}, x} \to T_{ \mathcal{Y} , \tau(x) }  =   \kappa \oplus \kappa \oplus  \kappa' ,
\]
is equal to the composition
\[
M_\kappa \oplus N_\kappa  \to  \kappa' \oplus \kappa' \oplus  \kappa'  \to  \kappa \oplus \kappa \oplus  \kappa' 
\]
in which the first arrow is the $\kappa$-linear map defined by
\begin{align*}
( \mu ,  0  )   &  \mapsto  \big(    h( \mu ,x_1)  ,  0   , h( \mu ,  \widetilde{x}_2 )     \big)  \\
(  0  , \nu )    &  \mapsto  \big(  0   ,  h(\nu ,x_2)    ,     h(   x_1, \nu   )     \big)  ,
\end{align*}
and the second is $(a,b,c) \mapsto ( \mathrm{Tr}(a) , \mathrm{Tr}(b) , c)$.
Here $\mathrm{Tr} : \kappa' \to \kappa$ is the trace.  Thus it suffices to show that the first arrow is surjective.

Using the nondegeneracy of the Hermitian form on $M_\kappa$,  our assumption on the submodule generated by $x_1$ and $\widetilde{x}_2$  guarantees that the linear functionals $h(\cdot, x_1)$ and $h(\cdot,\widetilde{x}_2)$ on $M_\kappa$ are $\kappa'$-linearly independent. 
 It follows that the image of $M_\kappa \oplus 0$ under the first arrow in the composition is $\kappa'\oplus 0 \oplus \kappa'$.
Our assumption that $h(\nu , x_2) \in (\kappa')^\times$ for some choice of $\nu$  guarantees that the image of $0 \oplus N_\kappa$ contains an element of the form $(0,1, * ) \in \kappa'\oplus \kappa' \oplus \kappa'$.  The  surjectivity of the first arrow in the composition follows immediately.
\end{proof}

\begin{lemma}\label{lem:pre 2x2}
Assume that  $N \not\subset M$, so that  the integer $i$ of \eqref{idef} is positive.
 The compact open subset
\[
C = \{ (x_1,x_2) \in M\times N  :  \pi^{i-1} x_2 \not\in M \} \subset V^2
\]
satisfies
\begin{align*}
\frac{   \Den_T(C)     \Vol(C)  } {   \Vol(  M \times N  )   }  
&  =   \frac{     1 - (-q^{-1})^{D-2}  } {     q^{   2   D +  2  i - 1    }    } 
   \left( \#\mathrm{Rep}_{t_2}( N_k) - \# \mathrm{Rep}_{t_2}( A ) \right) .
\end{align*}
Here $D=\dim_{F'}(V)$,  and $A$ is the kernel of the map $N_k \to M_k$
obtained by reducing $\pi^i : N \to M$.  
\end{lemma}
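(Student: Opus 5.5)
The plan is to apply Proposition \ref{prop:integral densities} to the integral models $\mathcal{X}=M\times N$ and $\mathcal{Y}$ with $\mathcal{Y}(\co_F)$ equal to \eqref{denominator Y}, together with the integral moment map \eqref{technical integral moment}. For $\mathcal{U}\subset\mathcal{X}$ I take the Zariski open locus of points satisfying conditions (1) and (2) of Lemma \ref{lem:smoothness}. Both are open conditions: (1) is a rank condition, and (2) says the linear form $h(\cdot,x_2)$ on $N$ is nonzero modulo the maximal ideal. By Lemma \ref{lem:smoothness}, together with smoothness of $\mathcal{X}$ and $\mathcal{Y}$, the map $\tau$ is smooth on $\mathcal{U}$. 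The translation-invariant forms extend to nowhere vanishing forms on these models because the models come from $\co_F$-lattices.

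The first step is to check the hypothesis $C\cap X_T(F)=\mathcal{U}_T(\co_F)$.
\begin{itemize}
\item Let $(x_1,x_2)\in C$ with $\tau(x_1,x_2)=T$. The reduction of $\widetilde{x}_2=\pi^i x_2$ in $M_k$ is nonzero, since $\pi^{i-1}x_2\notin M$.
\item This reduction is isotropic, because $h(\widetilde{x}_2,\widetilde{x}_2)=\pi^{2i}t_2$ and $i\ge1$.
\item It is orthogonal to $x_1$, because $h(x_1,\widetilde{x}_2)=\pi^i\cdot(\text{the integral off-diagonal entry of }T)\equiv 0$.
\item The reduction of $x_1$ is anisotropic, since $t_1\in\co_F^\times$.
\item A nonzero isotropic vector and an anisotropic vector orthogonal to it are $k'$-independent, so condition (1) holds at the special fiber. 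At the generic fiber, (1) holds because $T$ is nonsingular.
\item Condition (2) holds with $\nu=x_2$, since $t_2$ is a unit.
\end{itemize}
Conversely, a point of $\mathcal{U}_T(\co_F)$ satisfies (1) at the special point. This forces $\widetilde{x}_2\not\equiv0$, that is, $\pi^{i-1}x_2\notin M$, so the point lies in $C$.

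Next comes the volume bookkeeping in Proposition \ref{prop:integral densities}, where $\Vol(\mathcal{X}(\co_F))=\Vol(M\times N)$. The counts are $\#\mathcal{Y}(k)=q^4$ and $\#\mathcal{X}(k)=q^{4D}$. Since $\Herm_2(\co_{F'})^\vee=\Herm_2(\co_{F'})$ in the unramified case, and $\mathcal{Y}(\co_F)$ enlarges the off-diagonal entry $\co_{F'}$ by $\pi^{-i}$, we get $\Vol(\Herm_2(\co_{F'})^\vee)/\Vol(\mathcal{Y}(\co_F))=q^{-2i}$. So the left side of the lemma equals $\#\mathcal{U}_T(k)\cdot q^{4-4D-2i}$.

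The main step is counting $\mathcal{U}_T(k)$. Its points are pairs $(x_1,x_2)\in M_k\times N_k$ with $h(x_1,x_1)=t_1$, $h(x_2,x_2)=t_2$, $h(x_1,\widetilde{x}_2)=0$, and $\widetilde{x}_2\neq0$, i.e.\ $x_2\notin A$; the remaining conditions are then automatic, as shown above.
\begin{itemize}
\item The number of admissible $x_2$ is $\#\mathrm{Rep}_{t_2}(N_k)-\#\mathrm{Rep}_{t_2}(A)$.
\item For each such $x_2$, the vector $\widetilde{x}_2$ is a nonzero isotropic vector in the nondegenerate $D$-dimensional space $M_k$. Its orthogonal complement has dimension $D-1$, with radical the line $k'\widetilde{x}_2$, so its nondegenerate quotient has dimension $D-2$.
\item The $d=1$ case of Lemma \ref{lem:finite hermitian reps} then gives $q^{2D-3}\bigl(1-(-q^{-1})^{D-2}\bigr)$ choices of $x_1$, independent of $x_2$.
\end{itemize}
Multiplying, $q^{2D-3}\cdot q^{4-4D-2i}=q^{-(2D+2i-1)}$, which gives the stated formula.

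The only delicate point is the identification $C\cap X_T(F)=\mathcal{U}_T(\co_F)$, in particular that the isotropy of $\widetilde{x}_2$ together with the vanishing of $h(x_1,\widetilde{x}_2)$ forces the independence required in (1).
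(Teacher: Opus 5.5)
Your proposal is correct and follows the paper's proof essentially step for step. You use the same open subscheme $\mathcal{U}$ cut out by the two conditions of Lemma \ref{lem:smoothness}, and the same identification $\mathcal{U}_T(\co_F)=C\cap X_T(F)$ via the nonvanishing of the reduction of $\widetilde{x}_2$. The volume factor $q^{4-4D-2i}$ from Proposition \ref{prop:integral densities} agrees, and you count $\mathcal{U}_T(k)$ the same way, using the degenerate space $\langle \widetilde{x}_2\rangle^\perp$ of dimension $D-1$ with one-dimensional radical.
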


\begin{proof}
For any $\co_F$-algebra $R$, define
$
\mathcal{U}(R) \subset M_R \times N_R
$
to be the set of pairs $(x_1,x_2)$ satisfying 
\begin{itemize}
\item
$x_1 , \widetilde{x}_2 \in M_R$ generate an $R'$-module  local direct summand of rank $2$,
\item
there is some $\nu \in N_R$ such that  $h(\nu , x_2) \in (R')^\times$.
\end{itemize}
  This functor is represented by an open subscheme $\mathcal{U}  \subset  \mathcal{X}$, 
and Lemma \ref{lem:smoothness}  implies that the  restriction to $\mathcal{U}$ of the moment morphism \eqref{technical integral moment} is smooth.
As in \eqref{integral cartesian},
define $\co_F$-schemes $\mathcal{U}_T  \subset \mathcal{X}_T$  as  fiber products
\[
\begin{tikzcd}
{ \mathcal{U}_T } \ar[r] \ar[d]   & { \mathcal{X}_T }  \ar[r] \ar[d]  & { \Spec(\co_F) } \ar[d , "T" ]  \\
{ \mathcal{U} }   \ar[r]  & { \mathcal{X} } \ar[r , "\tau" '] & { \mathcal{Y} } .
\end{tikzcd}
\]

   Let us write 
\[
T =   \begin{pmatrix} t_1 & \pi^{-i}(\pi^i s)  \\  \pi^{-i} (\pi^i s^\sigma )& t_2  \end{pmatrix} 
=(t_1,t_2, \pi^i s) \in \mathcal{Y}(\co_F) 
\]
for some $s \in \co_{F'}$.  Because we assume $i>0$, the reduction of $T$ to $\mathcal{Y}(k)$ is $(t_1,t_2,0)$, where we denote again by $t_i$ its reduction to $k^\times$.  
Unpacking the definition of the moment morphism \eqref{technical integral moment},
the pairs $(x_1,x_2) \in M_k \times N_k = \mathcal{X} (k)$ that lie in $\mathcal{X}_T(k)$ are precisely those that satisfy
\[
h(x_1,x_1)=t_1 , \quad h(x_2,x_2)=t_2 , \quad    h(x_1,\widetilde{x}_2)=0.
\]
Moreover, any such pair must satisfy
\begin{equation}\label{x_2 isotropic}
h ( \widetilde{x}_2,\widetilde{x}_2) = \pi^{2i} h (x_2,x_2) =0.
\end{equation}
From this it is not hard to see that 
\begin{equation}\label{basic U_T}
  \mathcal{U}_T(k)  = \left\{
(x_1,x_2) \in M_k \times N_k  : 
\begin{array}{c}  
h(x_1,x_1)=t_1 , \, h(x_2,x_2)=t_2    \\
\widetilde{x}_2 \neq 0 , \, h(x_1,\widetilde{x}_2)=0 
 \end{array} 
\right\} . 
\end{equation}

A point of $\mathcal{X}_T(\co_F)$ is just a pair $x=(x_1,x_2) \in M\times N$ with moment matrix $\tau(x)=T$.  
Given that $h(x_2,x_2) \in \co_{F'}^\times$,   such a pair lies in $\mathcal{U}_T(\co_F)$ if and only if $x_1, \pi^i x_2 \in M$ generate an $\co_{F'}$-module local direct summand of rank $2$.   This is equivalent to the condition that 
the reductions of $x_1$ and $\widetilde{x}_2$ to $M_k$ are $k'$-linearly independent. 
In other words, $\mathcal{U}_T(\co_F)$ is equal to the preimage of $\mathcal{U}_T(k)$ under the reduction map 
$\mathcal{X}_T(\co_F) \to \mathcal{X}_T(k)$.  Given the description \eqref{basic U_T}, we find that
\[
 \mathcal{U}_T(\co_F)  = \left\{
(x_1,x_2) \in M  \times N  :  \tau(x) =T \mbox{ and }  \pi^i x_2 \not\in \pi M \right\} .
\]
In other words, $ \mathcal{U}_T(\co_F)  = C \cap X_T(F)$.

We have now verified the hypotheses of Proposition \ref{prop:integral densities}, which implies
\[
\Den_T(C) 
 =  \#  \mathcal{U}_T( k )  
\cdot
\frac{ \# \mathcal{Y}( k )   }{  \# \mathcal{X}(k)   }
\cdot \frac{   \Vol( \mathcal{X}(\co_F) )   }{    \Vol(C)  }
\cdot \frac{ \Vol(  \Herm_2(\co_{F'})  ) }{ \Vol(  \mathcal{Y}(\co_F)  ) } .
\]
Note that we have used $ \Herm_2(\co_{F'}) = \Herm_2(\co_{F'})^\vee$, a consequence of the assumption that $F'/F$ is unramified imposed throughout \S \ref{s:density formulas} .

Clearly $\# \mathcal{Y}(k) = q^4$ and  $\# \mathcal{X}(k) = q^{ 4 D}$.  
As $\Herm_2(\co_{F'}) \subset \mathcal{Y}(\co_F)$ is an $\co_{F}$-submodule of colength $2 i$, we have 
\[
\frac{ \Vol(  \Herm_2(\co_{F'})  ) }{ \Vol(  \mathcal{Y}(\co_F)  ) } = q^{-2 i}.
\]
Plugging this information into the above formula for $\Den_T(C)$ shows that 
 \[
\frac{   \Den_T(C)     \Vol(C)  } {   \Vol(  M \times N  )   }  
 =   \frac{   \# \mathcal{U}_T(k)  } {     q^{   4   D   + 2 i - 4    }    } .
\]

It remains to count the number of elements in \eqref{basic U_T}.    
The number of $x_2 \in N_k$ such that $\widetilde{x}_2 \neq 0$ and $h( x_2,x_2) = t_2$  is equal to the number of times $N_k$ represents $t_2$, 
minus the number of times that $A=\ker( N_k \to M_k)$ represents $t_2$. 
For each such $x_2$, the nonzero vector $\widetilde{x}_2 \in M_k$ is isotropic, by \eqref{x_2 isotropic},  so the number of $x_1\in M_k$ satisfying $h(x_1,\widetilde{x}_2)=0$ and $h(x_1,x_1) = t_1$ is  the number of times $t_1$ is represented by the degenerate Hermitian space  $\langle \widetilde{x}_2 \rangle^\perp$ of vectors in $M_k$  orthogonal to $\widetilde{x}_2$.
This degenerate Hermitian space has  $k'$-dimension $D-1$, and radical of   dimension $1$.
Thus,  for each of the $\#\mathrm{Rep}_{t_2}( N_k) - \# \mathrm{Rep}_{t_2}( A )$ choices of $x_2$, there are 
\[
 \# \mathrm{Rep}_{t_1}( \langle \widetilde{x}_2 \rangle^\perp ) = q^{2D-3  }   \left( 1 - (-q^{-1})^{D-2} \right)
\]
choices of $x_1$.  This proves that 
\[
 \# \mathcal{U}_T(k)   = q^{2D-3  }   \left( 1 - (-q^{-1})^{D-2} \right) \cdot \left( \#\mathrm{Rep}_{t_2}( N_k) - \# \mathrm{Rep}_{t_2}( A ) \right) ,
\]
completing the proof.
\end{proof}

The following result provides a formula for  $\Den_T(M\times N)$ in terms of the representation numbers of finite Hermitian spaces from  Lemma \ref{lem:finite hermitian reps}.
 For it to be useful, one must know   the relative positions of the lattices $M$ and $N$.
We will explore this in the next subsection.

\begin{proposition}\label{prop:main 2x2}
Define  $\co_{F'}$-lattices 
\[
N^{(i)} = \{ x_2 \in N : \pi^i x_2 \in M\}
\]
for every $i\ge 0$, and let  $m\ge 0$ be the integer determined by 
\[
N \cap M = N^{(0)} \subsetneq N^{(1)}   \subsetneq  \cdots  \subsetneq N^{(m)} =N .
\]
 The representation density  $\Den_T( M\times N)$ is equal to 
\begin{align*}
 &   \frac{     \left( 1 -  (-q^{-1}) ^{ D- \epsilon   }    \right)    \left(   1  -  (-q^{-1}) ^{ D-1  }\right)     }{ [ N : N^{(0)}  ]   }     
\sum_{\ell =0}^{\ord_F( \det(T) )} (-1)^\ell   (-q^{-1})^{  \ell ( D -2 ) }   \\
 & \quad    +
  \left(  1 - (-q^{-1})^{D-2}  \right)    \sum_{i=1}^m    \frac{  \#\mathrm{Rep}_{t_2}( N^{(i)}_k ) - \# \mathrm{Rep}_{t_2}( A^{(i)})   } {    q^{   2   D +  2  i - 1    }  [ N : N^{(i)}  ]    } .
\end{align*}
Here $D=\dim_{F'}(V)$, $\epsilon$ is the $k'$-dimension of the radical of $N^{(0)}_k$, and 
\[
A^{(i)} = \mathrm{ker}( N_k^{(i)} \to M_k) 
\]
 is  the kernel of the map  obtained by reducing $\pi^i : N^{(i)} \to M$. 
\end{proposition}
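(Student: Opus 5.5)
Decompose $M\times N$ according to the filtration by the sets $N^{(i)}$, namely
\[
M\times N = (M\times N^{(0)}) \sqcup \bigsqcup_{i=1}^m C_i, \qquad C_i = M\times (N^{(i)}\smallsetminus N^{(i-1)}) ,
\]
and apply Remark \ref{rem:general disjoint}. This gives
\[
\Den_T(M\times N) = \Den_T(M\times N^{(0)})\frac{\Vol(M\times N^{(0)})}{\Vol(M\times N)} + \sum_{i=1}^m \Den_T(C_i)\frac{\Vol(C_i)}{\Vol(M\times N)} .
\]
The volume ratio in the first term is $1/[N:N^{(0)}]$. For $i\ge 1$, $N^{(i)}$ is the largest lattice between $N^{(0)}$ and $N$ for which $i(M,N^{(i)})=i$. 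Moreover
\[
C_i = \{(x_1,x_2)\in M\times N^{(i)} : \pi^{i-1}x_2\notin M\} .
\]
This is exactly the set of Lemma \ref{lem:pre 2x2} with $N$ replaced by $N^{(i)}$; its hypothesis $N^{(i)}\not\subset M$ holds since $i\ge1$, and $A^{(i)}$ is the kernel appearing there.

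Lemma \ref{lem:pre 2x2} then gives $\Den_T(C_i)\Vol(C_i)/\Vol(M\times N^{(i)})$. Multiplying by $\Vol(M\times N^{(i)})/\Vol(M\times N)=1/[N:N^{(i)}]$ produces the $i$-th summand of the second line verbatim. So the only real work is the first term.

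For $\Den_T(M\times N^{(0)})$ I use the reduction formula. Here $N^{(0)}=N\cap M$ is integral (its form is $\co_{F'}$-valued) and contained in $M$, and $t_2\in\co_F^\times$, so I aim to apply Proposition \ref{prop:reduction} with $d_1=d_2=1$. That proposition needs a diagonal $T$, so first diagonalize. Put
\[
g=\begin{pmatrix}1&0\\ -b/t_2&1\end{pmatrix}, \qquad b = h(x_1,x_2) .
\]
Since $t_2$ is a unit, $g\in\GL_2(\co_{F'})$. Right multiplication by $g$ sends $(x_1,x_2)\mapsto(x_1-(b/t_2)x_2,\,x_2)$, which preserves $M\times N^{(0)}$ because $N^{(0)}\subset M$. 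By Proposition \ref{prop:density invariance} (with $\ord_F$ of the norm equal to $0$),
\[
\Den_T(M\times N^{(0)}) = \Den_{T'}(M\times N^{(0)}), \qquad T'=\mathrm{diag}(t_1',t_2),\quad t_1'=\det(T)/t_2 .
\]
Proposition \ref{prop:reduction}, with $N_1=M$, $N_2=N^{(0)}$, $T_1=t_1'$, $T_2=t_2$, gives
\[
\Den_{T'}(M\times N^{(0)}) = \Den_{t_1'}(M(z^\perp))\,\Den_{t_2}(N^{(0)}) .
\]
This holds if some $z\in N^{(0)}$ has $h(z,z)=t_2$; otherwise both sides vanish. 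Indeed, in that case $\mathrm{Rep}_{t_2}(N^{(0)}_k)$ is empty, so Lemma \ref{lem:finite hermitian reps} forces the factor $1-(-q^{-1})^{D-\epsilon}$ to be $0$, and the formula still holds.

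The two factors are now classical. Proposition \ref{prop:nonsingular T density} gives $\Den_{t_2}(N^{(0)})=1-(-q^{-1})^{D-\epsilon}$. Since $M$ is self-dual and $z$ spans a self-dual line, $M(z^\perp)$ is self-dual of rank $D-1$. Proposition \ref{prop:vector density} in dimension $D-1$ then gives
\[
\Den_{t_1'}(M(z^\perp)) = \bigl(1-(-q^{-1})^{D-1}\bigr)\sum_{\ell=0}^{\ord(t_1')}(-1)^\ell(-q^{-1})^{\ell(D-2)} ,
\]
with $\ord(t_1')=\ord_F\det(T)$. This is the first line.

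The main obstacle I expect is the bookkeeping in identifying $C_i$ with the set of Lemma \ref{lem:pre 2x2}. The lemma's integer $i(M,N^{(i)})$ must equal $i$, and the exponent and index normalizations must match. One also has to check the edge case: if $t_2$ is not represented by $N^{(0)}_k$, then $\epsilon=D$ forces the prefactor to vanish, consistent with Remark \ref{rem:no represent}.
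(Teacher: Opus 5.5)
Your proposal is correct and follows the paper's proof essentially step for step. It uses the same decomposition $C_0\sqcup C_1\sqcup\cdots\sqcup C_m$ and applies Lemma \ref{lem:pre 2x2} to $N^{(i)}$ for the terms with $i>0$. For $C_0$ it uses the same lower-triangular Gram--Schmidt change of basis, which preserves $M\times N^{(0)}$ because $N^{(0)}\subset M$, followed by Propositions \ref{prop:reduction}, \ref{prop:nonsingular T density} and \ref{prop:vector density}.

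There are two small points. Your $b$ should be the off-diagonal entry of the fixed matrix $T$, not $h(x_1,x_2)$ for variable $x_1,x_2$. In the edge case, it is quicker to argue as the paper does: $\Den_{t_2}(N^{(0)})=0$ by Remark \ref{rem:no represent}, so the factor $1-(-q^{-1})^{D-\epsilon}$ must vanish. Your route through emptiness of $\mathrm{Rep}_{t_2}(N^{(0)}_k)$ also works, but it silently needs a Hensel lifting argument.
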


\begin{proof}
 Decompose
$
M\times N = C_0 \sqcup C_1 \sqcup \cdots \sqcup C_m
$
by setting
\[
C_i = \begin{cases}
M \times N^{(0)}  & \mbox{if } i=0 \\
M \times ( N^{(i)}  \smallsetminus N^{(i-1) })  & \mbox{if } i >0 .
\end{cases}
\]
Remark \ref{rem:general disjoint} implies
\[
\Den_T( M \times N) 
%=   \sum_{i=0}^m \frac{ \Den_T(C_i)  \Vol(C_i)}{\Vol(M\times N)}  
=   \sum_{i=0}^m \frac{ \Den_T(C_i)  \Vol(C_i)}{ [ N : N^{(i)}  ]  \Vol(M\times N^{(i)})}  ,
\]
and we will compute the terms on the right-hand side individually.
For $i >0 $, applying Lemma \ref{lem:pre 2x2} with $N$ replaced by $N^{(i)}$ shows that
\begin{align*}
  \frac{ \Den_T(C_i)   \Vol(C_i)}{      \Vol(M\times N^{(i)} )}   
  =   \frac{     1 - (-q^{-1})^{D-2}  } {    q^{   2   D +  2  i - 1    }     } 
\left(  \#\mathrm{Rep}_{t_2}( N^{(i)}_k ) - \# \mathrm{Rep}_{t_2}( A^{(i)})   \right) ,
\end{align*}
and so we are done if we can prove
\begin{align}\label{reduction application}
\Den_T (C_0) & =   \left( 1 -  (-q^{-1}) ^{ D- \epsilon   }   \right) \left(   1  -  (-q^{-1}) ^{ D-1  }\right)   \\
& \quad \times \sum_{\ell =0}^{\ord( \det(T)  )} (-1)^\ell   (-q^{-1})^{  \ell ( D -2 ) } . \nonumber
\end{align}

Endow  $\co_{F'}^2$ with the Hermitian form determined by our fixed 
\[
T = \begin{pmatrix} t_1 & * \\ *  & t_2  \end{pmatrix}  \in \Herm_2(\co_{F'}) ,
\]
and denote by  $f_1 ,f_2 \in  \co_{F'}^2$  the standard basis vectors.
Because $t_2 \in \co_{F'}^\times$, we can apply the Gram-Schmidt process to find an orthogonal basis  of the form $e_1  , f_2 \in \co_{F'}^2$; that is to say, without changing $f_2$.  
In particular, the  change of basis matrix $g\in \GL_2(\co_{F'})$ is lower triangular, and the matrix 
\[
T \action g  = \begin{pmatrix}  s_1 &  \\  & t_2 \end{pmatrix} \in \Herm_2(\co_{F'} ) 
\]
 from \eqref{change of basis}  is  diagonal with $\ord_F(s_1) = \ord_F( \det(T))$.
 Because  $N^{(0)} \subset M$ and $g$ is lower triangular, we have  $C_0g = C_0$.   Hence
\[
\Den_T(C_0) = \Den_{ T\action g}( C_0  ) 
\]
by Proposition \ref{prop:density invariance}.

Applying the reduction formula  from Proposition \ref{prop:reduction} to the diagonal matrix $T\action g$, we find that 
\[
\Den_T (C_0) =  \Den_{s_1}( M(z^\perp)  ) \cdot \Den_{t_2}( N^{(0)} ), 
\]
where $M(z^\perp)  \subset M$ is the set of vectors orthogonal to some $z\in N^{(0)}$ of Hermitian norm $t_2$. 
 If $N^{(0)}$ does not represent $t_2$,  then  $\Den_T (C_0)=0$.

Proposition \ref{prop:nonsingular T density} implies 
\[
\Den_{t_2}( N^{(0)} ) =        1 -  (-q^{-1}) ^{ D- \epsilon   } ,
\]
and in particular the right-hand side is $0$ if  $N^{(0)}$ does not represent $t_2$.
Assuming the existence of some $z$ as above,   $M(z^\perp)$ is a self-dual Hermitian $\co_{F'}$-lattice of rank $D-1$, and so  Proposition \ref{prop:vector density} implies
\[
\Den_{s_1}( M(z^\perp)  )  = \left(   1  -  (-q^{-1}) ^{ D-1  }\right)  
\sum_{\ell =0}^{\ord(s_1)} (-1)^\ell   (-q^{-1})^{  \ell ( D -2 ) }  .
\]
The conclusion is that, regardless of whether or not $N^{(0)}$  represents $t_2$,  the equality \eqref{reduction application} holds.
\end{proof}

%%%%%%%%%%%%%%%%%%%%%%%%%%%%%

\subsection{The density polynomial in a simple case}

%%%%%%%%%%%%%%%%%%%%%%%%%%%%%

We continue to work with a nonsingular Hermitian matrix
\[
T = \begin{pmatrix} t_1 & * \\ *  & t_2  \end{pmatrix}  \in \Herm_2(\co_{F'}) 
\]
satisfying $t_1,t_2 \in \co_{F'}^\times$, and remind the reader that $F'/F$ is assumed to be unramified throughout \S \ref{s:density formulas}. 
Suppose we are given self-dual $\co_{F'}$-lattices $M,N \subset V$, with the property that
\begin{equation}\label{close lattices}
M /(M \cap N) \iso \co_{F'}/ \pi^m \co_{F'} \iso N/(M \cap N )
\end{equation}
as $\co_{F'}$-modules for some integer $m \ge 0$.

The following two  corollaries of Proposition  \ref{prop:main 2x2}  will be needed in \S \ref{ss:ASW easy case}.
When  $m=0$, so that $M=N$, these formulas agree with known formulas for classical representation densities found in  \cite{Hir,  KR1, LZ1,Nagaoka}.

\begin{corollary}\label{cor:main 2x2}
Suppose  $V$ is a direct sum of $r+1$ hyperbolic planes.
\begin{enumerate}
\item
If $m=0$,  then
\begin{align*}
 \Den_T(M \times N) 
  &  =      \left( 1 -  q^{ - 2r - 2    }    \right)    \left(   1  +   q^{  - 2r - 1  }\right)        
\sum_{\ell =0}^{\ord_F( \det(T) )} (-1)^\ell   q^{  - 2  \ell r } .
\end{align*}
\item
 If $m>0$, then
\begin{align*}
 \Den_T(M \times N) 
  &  = 
q^{-2m}   (  1 - q^{- 2r }  )      (   1  +     q^{ -2r  -1  }  )  
    \sum_{\ell=0}^{\ord_F( \det(T))} (-1)^\ell   q^{  - 2 \ell r  }  \\
& \quad   +
q^{ - 2m}    (  1 - q^{-2r}  )    ( 1-q^{-2}  )     \Big(   m      +  ( 1-m)  q^{ - 2r  }        \Big)   .
\end{align*}
\end{enumerate}
\end{corollary}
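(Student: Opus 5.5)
The plan is to specialize Proposition \ref{prop:main 2x2} with $D=\dim_{F'}(V)=2r+2$. I will use that $-q^{-1}$ raised to an even (resp.\ odd) power gives $q^{-\mathrm{even}}$ (resp.\ $-q^{-\mathrm{odd}}$). In particular $(-q^{-1})^{\ell(D-2)}=q^{-2\ell r}$ and $1-(-q^{-1})^{D-1}=1+q^{-2r-1}$.

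\emph{Case $m=0$.} Here $M=N$, so $N^{(0)}=N$ and the sum over $1\le i\le m$ is empty. Since $N$ is self-dual, its reduction is nondegenerate and $\epsilon=0$, which gives the factor $1-q^{-2r-2}$. The first formula then follows immediately.

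\emph{Case $m>0$.} The first and key step is a normal form for the pair of lattices. I will show there is a hyperbolic pair $e,f$ (isotropic, with $h(e,f)=1$) and a self-dual lattice $L$ of rank $2r$ orthogonal to both, such that
\[
M=L\oplus\langle e,f\rangle,\qquad N=L\oplus\langle \pi^{-m}e,\pi^{m}f\rangle .
\]
This uses \eqref{close lattices}. One picks $x\in N$ generating $N/(M\cap N)$, so that $e=\pi^m x\in M$ is primitive. One then checks that $e$ is isotropic: $e\in M$ and $\pi^{-m}e\in N$ are both integral, which forces $h(e,e)\in\pi^{2m}\co_F$, and an adjustment using self-duality should make it exactly $0$. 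Next one finds $f\in M$ with $h(e,f)=1$ and $\pi^m f\in N$, and splits off the hyperbolic plane by Jacobowitz's classification, as in the proof of Proposition \ref{prop:reduction}. I expect this normal form to be the main obstacle, though I believe it is standard elementary-divisor theory for a pair of self-dual lattices.

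With the normal form in hand, everything becomes explicit. For $0\le i\le m$ we have $N^{(i)}=L\oplus\langle\pi^{-i}e,\pi^m f\rangle$ and $[N:N^{(i)}]=q^{2(m-i)}$. The radicals of the reductions are as follows.
\begin{itemize}
\item For $N^{(0)}_k$ the radical is spanned by $e$ and $\pi^m f$, so $\epsilon=2$. This makes the first term of Proposition \ref{prop:main 2x2} equal to $q^{-2m}(1-q^{-2r})(1+q^{-2r-1})\sum_\ell(-1)^\ell q^{-2\ell r}$.
\item For $0<i<m$, the space $N^{(i)}_k$ has a $2$-dimensional radical, since $h(\pi^{-i}e,\pi^mf)=\pi^{m-i}\equiv0$.
\item For $i=m$, the space $N^{(m)}_k=N_k$ is nondegenerate.
\item The kernel $A^{(i)}$ of $\pi^i$ is $L_k\oplus\langle \pi^m f\rangle$, of dimension $2r+1$ with a $1$-dimensional radical.
\end{itemize}

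The second step is to count representations using Lemma \ref{lem:finite hermitian reps} with $d=1$:
\begin{align*}
\#\mathrm{Rep}_{t_2}(A^{(i)})&=q^{4r+1}(1-q^{-2r}),\\
\#\mathrm{Rep}_{t_2}(N^{(i)}_k)&=q^{4r+3}(1-q^{-2r})\quad (i<m),\\
\#\mathrm{Rep}_{t_2}(N^{(m)}_k)&=q^{4r+3}(1-q^{-2r-2}).
\end{align*}
The differences are therefore $q^{4r+1}(q^2-1)(1-q^{-2r})$ for $i<m$, and $q^{4r+1}(q^2-1)$ for $i=m$. Dividing by $q^{2D+2i-1}[N:N^{(i)}]=q^{4r+3+2m}$ and multiplying by $1-(-q^{-1})^{D-2}=1-q^{-2r}$, the sum over $i$ becomes
\[
q^{-2m}(1-q^{-2r})(1-q^{-2})\big((m-1)(1-q^{-2r})+1\big).
\]
This equals the stated second term, since $(m-1)(1-q^{-2r})+1=m+(1-m)q^{-2r}$.
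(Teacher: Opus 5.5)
Your proposal is correct and follows the paper's proof. Like the paper, you specialize Proposition \ref{prop:main 2x2} with $D=2r+2$, compute $[N:N^{(i)}]=q^{2(m-i)}$ and the radicals of $N^{(i)}_k$ and $A^{(i)}$, and feed these into Lemma \ref{lem:finite hermitian reps}; your counts and final algebra all check out. The normal form you flag as the main obstacle is what the paper's ``easy calculations'' use implicitly. It also follows at once from the Cartan decomposition of $\mathrm{U}(V)$: cyclicity in \eqref{close lattices} forces a single nonzero elementary divisor $m$, so the isotropy adjustment and the appeal to Jacobowitz can be bypassed.
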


\begin{proof}
We just need to compute the various quantities appearing in the  formula of Proposition  \ref{prop:main 2x2}, 
which are determined by the relation \eqref{close lattices}.

If   $0<i\le m$,  then easy calculations show that
$
[ N : N^{(i)} ] = q^{2(m-i)} ,
$
and that the $k'$-dimension of the radical of $N^{(i)}_k$  is $2$ if $i<m$, and $0$ if $i=m$.
The $k'$-Hermitian space $A^{(i)}$ has dimension $2r+1$, and its radical has dimension $1$.
Plugging this information into Lemma \ref{lem:finite hermitian reps},  we find
\begin{align*}
 \#\mathrm{Rep}_{t_2}( N^{(i)}_k ) - \# \mathrm{Rep}_{t_2}( A^{(i)})
  =   
   q^{   4r+ 3        }   ( 1 - q^{-2} ) 
\begin{cases}
 (  1 - q ^{ -2r }  )      & \mbox{if }  i   < m  \\
1    & \mbox{if }  i  = m  .
\end{cases}
\end{align*}
The desired formula for the density now follows from Proposition  \ref{prop:main 2x2}.
\end{proof}

We now restate Corollary \ref{cor:main 2x2} in terms of the density polynomials defined in  Proposition \ref{prop:density polynomial}.

\begin{corollary}\label{cor:poly 2x2}
Suppose  $V$ is a  hyperbolic plane.
\begin{enumerate}
\item
If $m=0$,  then
\begin{align*}
 \Den_T(s, M \times N) 
  &  =      \left( 1 -  q^{ - 2s - 2    }    \right)    \left(   1  +   q^{  - 2s - 1  }\right)        
\sum_{\ell =0}^{\ord_F( \det(T) )} (-1)^\ell   q^{  - 2  \ell s } .
\end{align*}
\item
 If $m>0$, then
\begin{align*}
 \Den_T(s, M \times N) 
  &  = 
q^{-2m}   (  1 - q^{- 2s }  )      (   1  +     q^{ -2s  -1  }  )  
    \sum_{\ell=0}^{\ord_F( \det(T))} (-1)^\ell   q^{  - 2 \ell s }  \\
& \quad   +
q^{ - 2m}    (  1 - q^{-2s}  )    ( 1-q^{-2}  )     \Big(   m      +  ( 1-m)  q^{ - 2s  }        \Big)   .
\end{align*}
\end{enumerate}
\end{corollary}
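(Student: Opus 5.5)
The plan is to deduce Corollary \ref{cor:poly 2x2} from Corollary \ref{cor:main 2x2} using the defining interpolation property of Proposition \ref{prop:density polynomial}. That proposition says $\Den_T(r, M\times N) = \Den_T((M\times N)^{[r]})$ for every integer $r\ge 0$, where $(M\times N)^{[r]} = (M\times \Lambda_{r,r})\times(N\times \Lambda_{r,r})$ inside $(V\oplus W_{r,r})^2$. Since $\Den_T(s,C)$ is a polynomial in $q^{-2s}$ (as $F'/F$ is unramified), it is determined by its values at all $r\in\Z_{\ge 0}$. It therefore suffices to check that the right-hand sides of the claimed formulas, evaluated at $s=r$, agree with $\Den_T((M\times N)^{[r]})$.

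First I would rewrite $(M\times N)^{[r]}$ as $M'\times N'$ with $M' = M\oplus\Lambda_{r,r}$ and $N' = N\oplus \Lambda_{r,r}$, viewed as lattices in $V' = V\oplus W_{r,r}$. Next I would verify the hypotheses of Corollary \ref{cor:main 2x2} for this new data. Since $V$ is a hyperbolic plane, $V'$ is a direct sum of $r+1$ hyperbolic planes. Both $M'$ and $N'$ are self-dual, because $M$, $N$, and $\Lambda_{r,r}$ are. Finally, $M'\cap N' = (M\cap N)\oplus \Lambda_{r,r}$, so
\[
M'/(M'\cap N') \iso M/(M\cap N) \iso \co_{F'}/\pi^m\co_{F'},
\]
and likewise for $N'$. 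Thus \eqref{close lattices} holds with the same $m$, and $T$ is unchanged.

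Applying Corollary \ref{cor:main 2x2} then gives exactly the stated expressions with $s$ replaced by $r$, in both the case $m=0$ and the case $m>0$. Each of those expressions is visibly a polynomial in $q^{-2s}$, because $\ord_F(\det T)$ and $m$ do not depend on $r$. Two polynomials in $X=q^{-2s}$ that agree at the infinitely many points $X=q^{-2r}$ are equal, which proves the corollary.

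No serious obstacle is expected. The only point needing care is the bookkeeping that the hyperbolic summand $\Lambda_{r,r}$ preserves self-duality and the relative position \eqref{close lattices} of the two lattices.
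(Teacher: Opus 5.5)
Your proposal is correct and follows the paper's proof: apply Corollary \ref{cor:main 2x2} to the self-dual lattices $M\oplus\Lambda_{r,r}$ and $N\oplus\Lambda_{r,r}$ in $V\oplus W_{r,r}$, and conclude by polynomial interpolation in $q^{-2s}$. Your explicit check that self-duality and the relative position $m$ are preserved is bookkeeping the paper leaves implicit.
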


\begin{proof}
Using the notation of \S \ref{ss:density polynomial},  Corollary \ref{cor:main 2x2}  applies to  the self-dual lattices $M^{[r]} = M \oplus \Lambda_{r,r}$ and $N^{[r]} = N \oplus \Lambda_{r,r}$ in the direct sum of   $r+1$ hyperbolic planes   $V^{[r] } = V \oplus W_{r,r}$.  This gives an explicit formula for     
\[
\Den_T(r, M\times N ) = \Den_T( M^{[r]} \times N^{[r]} ),
\]
proving  that the desired equalities of polynomials in $q^{-s}$ hold whenever $s$ is a positive integer.
They therefore  hold for all $s$.
\end{proof}

%%%%%%%%%%%%%%%%%%%%%%%%%%%

\section{Arithmetic Siegel-Weil}
\label{s:ASW}

%%%%%%%%%%%%%%%%%%%%%%%%%%%%

In this section we formulate our conjectural extension of the arithmetic Siegel-Weil formula, incorporating the action of certain correspondences constructed by Li-Rapoport-Zhang.  We then verify a special case of this conjecture.
 
Throughout \S \ref{s:ASW} we work in  the situation of \S \ref{ss:introASW},  so that $F'/F$ is an unramified  quadratic extension  of finite extensions of $\Q_p$, with $p$ odd.
  Fix an integer $n\ge 2$.
 Up to isomorphism, there are two  nondegenerate $F'$-Hermitian spaces of dimension $n$, which we call $V$ and $\mathbb{V}$.
They are distinguished by 
\[
\eta(\det(V)) = 1 \quad \mbox{and}  \quad  \eta(\det(\mathbb{V})) = -1, 
\]
where  $\eta : F^\times   \to \{ \pm 1\}$ is the unique unramified quadratic character.

%%%%%%%%%%%%%%%%%%%%%%%%%%%

\subsection{Kudla-Rapoport cycles}

%%%%%%%%%%%%%%%%%%%%%%%%%%%%

We quickly recall the Kudla-Rapoport cycles  on unitary Rapoport-Zink spaces, following \cite{KR1} and \cite{LZ1}.

Denote by  $k'/k$  the extension of residue fields of $F'/F$, and fix a uniformizer $\pi \in \co_F$.
Denote by  $W$  the completion of the maximal unramified extension of $\co_{F'}$.  Its residue field $W/\pi W$  is an algebraic closure of $k'$.

\begin{definition}
A \emph{signature $(1,n-1)$ Hermitian $\co_{F'}$-module}  over a $\mathrm{Spf}(W)$-scheme $S$ is a triple $(X,\iota,\lambda)$ in which 
\begin{itemize}
\item $X$ is a formal $\pi$-divisible $\co_F$-module over $S$ of relative height $2n$ and dimension $n$,
\item $\iota :\co_{F'} \to \End(X)$ is an action extending the $\co_F$-action, and satisfying the Kottwitz  condition of signature $(1,n-1)$: for all $a \in \co_{F'}$ the endomorphism $\iota(a)$ of the locally free $\co_S$-module $\Lie(X)$ has characteristic polynomial $(T-a)(T-\sigma(a))^{n-1} \in \co_S[T]$.
\item
$\lambda : X \iso X^\vee$ is a principal polarization whose associated Rosati involution on $\End(X)$ restricts to $\sigma \in \Gal(F'/F)$ on the image of $\iota$.
\end{itemize}
For ease of notation, we usually just write $X$ in place of the triple $(X,\iota,\lambda)$.
\end{definition}

Up to quasi-isogeny (compatible with the $\co_{F'}$-action and polarization) there is a unique signature $(1,n-1)$ Hermitian $\co_{F'}$-module.  
Let us fix one such $\mathbb{X}$  over $W/\pi W$.  Denote by $\mathcal{N}_n$ the associated formally smooth $n$-dimensional Rapoport-Zink formal scheme over $\mathrm{Spf}(W)$,  parametrizing pairs $(X, \rho)$  consisting of a signature $(1,n-1)$ Hermitian $\co_{F'}$-module $X$ defined over a $\mathrm{Spf}(W)$-scheme $S$, together with an $\co_{F'}$-linear  quasi-isogeny 
\[
\rho : X\times_S \overline{S} \dashrightarrow \mathbb{X} \times_{ W/\pi W   } \overline{S}
\]
of height $0$ that respects the polarizations.
Here $\overline{S}$ is the reduction of $S$ to a scheme over $W/\pi W$.

Now fix a signature $(1,0)$ Hermitian $\co_{F'}$-module $X_0$ over $W$, and denote by $X^\sigma_0$ the same polarized formal $\co_F$-module, but with  the $\co_{F'}$-action $i_{X_0}$ replaced by $i_{X_0} \circ \sigma$.
Thus  $X^\sigma_0$ satisfies the Kottwitz condition of signature $(0,1)$.
Denote by $\mathbb{X}_0$ and $\mathbb{X}_0^\sigma$ the reductions of $X_0$ and $X_0^\sigma$ to $W/\pi W$.

As  in \cite[\S 3]{KR1} and \cite[\S 2.2]{LZ1}, the $\co_{F'}$-module $\Hom_{\co_{F'}}(  \mathbb{X} ^\sigma _0 , \mathbb{X} )$ carries a natural Hermitian form, and 
\[
\mathbb{V} =  \Hom_{\co_{F'}}(   \mathbb{X} ^\sigma _0 , \mathbb{X} ) \otimes  \Q 
\]
is  the unique $n$-dimensional Hermitian space with $\eta(\det(\mathbb{V}))=-1$.

As  in \cite[\S 2.3]{LZ1},   any  nonzero vector $x \in \mathbb{V}$ determines a Cartier divisor $\mathcal{Z}(x) \subset \mathcal{N}_n$, whose $S$-points are those pairs $(X, \rho) \in \mathcal{N}_n(S)$ for which the composition 
\[
\rho^{-1}   \circ x   \in \Hom_{\co_{F'} } (  \mathbb{X}^\sigma _0 \times_{   W/\pi W   } \overline{S}   , X\times_S \overline{S}    )  \otimes \Q
\]
is the reduction to $\overline{S}$ of a  morphism $ X^\sigma _0 \times_W S  \to  X$  of formal $\co_{F'}$-modules over $S$.

More generally, any $d$-tuple $x \in \mathbb{V}^d$ with linearly independent components determines a closed formal subscheme
\[
\mathcal{Z}(x)   \define   \mathcal{Z}(x_1) \cap \cdots \cap \mathcal{Z}(x_d)  \subset \mathcal{N}_n.
\]
These \emph{Kudla-Rapoport cycles} are typically not equidimensional,  so one instead works with the derived intersection 
\begin{equation}\label{derived KR}
 \mathcal{Z}{}^\mathbb{L}(x) 
   =  [  \co_{ \mathcal{Z}(x_1) } \otimes^{\mathbb{L}} \cdots  \otimes^{\mathbb{L}} \co_{ \mathcal{Z}(x_d) } ]
  \in    K_0^{ \mathcal{Z}(x)} ( \mathcal{N}_n )
\end{equation}
as in \cite[Definition 2.4.1]{LZ1}.  
Here $K_0^{ \mathcal{Z}(x)} ( \mathcal{N}_n )$ is the Grothendieck group of bounded complexes of vector bundles on $\mathcal{N}_n$,  whose cohomology sheaves are formally supported on $\mathcal{Z}(x)$; see    \cite[\S B.1]{Zhang2} for the formalism of these $K$-groups.

Both $\mathcal{Z}(x)$ and $\mathcal{Z}{}^\mathbb{L}(x)$ depend only on the $\co_{F'}$-lattice in $\mathbb{V}$ spanned by the  components of $x$, not on $x$ itself.  This is proved in \cite{How}, although because of our standing assumption that $F'/F$ is  unramified,  there is a more elementary proof \cite[Corollary 2.8.2]{LZ1}, generalizing the special case proved in  \cite[Proposition 3.2]{Ter}.

Now consider the case $d=n$.
If $x \in \mathbb{V}^n$ has nonsingular moment matrix (i.e. the components of $x$ form a basis of $\mathbb{V}$), then   any  complex $ \mathcal{F}^\bullet \in K_0^{ \mathcal{Z}(x)} ( \mathcal{N}_n )$
has a well-defined Euler-Poincar\'e characteristic
\[
\chi(    \mathcal{F}^\bullet)  =   \sum_{i,j} (-1)^{i+j} \mathrm{length}_W \,  H^j(  \mathcal{N}_n  , \mathcal{H}^i( \mathcal{F}^\bullet )  ),
\]
where $\mathcal{H}^i(\mathcal{F}^\bullet)$ is the $i^\mathrm{th}$ cohomology sheaf of the complex.
The essential point is that all cohomology groups appearing here are of finite length as $W$-modules,  because $\mathcal{H}^i(\mathcal{F}^\bullet)$ is formally supported on $\mathcal{Z}(x)$, which is  a proper scheme over $W/\pi^\ell W$ for some  $\ell \gg 0$.

%%%%%%%%%%%%%%%%%%%%%%%%%%%

\subsection{The spherical Hecke algebra}
\label{ss:spherical}

%%%%%%%%%%%%%%%%%%%%%%%%%%%%

Now we switch to the  Hermitian space $V$, characterized by $\eta(\det(V))=1$, and recall some constructions from \cite{LRZ}.

Denote by  $\mathrm{U}(V)$  the  group of Hermitian isometries of $V$, regarded as a locally compact totally disconnected group (as opposed to an algebraic group over $F$).
Fix a basis of $V$ for which the Hermitian form is given by the  antidiagonal unit matrix, and use this to regard $\mathrm{U}(V) \subset \GL_n(F')$.
  The $\co_{F'}$-span of the chosen basis vectors  is a self-dual lattice 
  \[
  \Xi \subset V,
  \]
  and we denote by $K \subset \mathrm{U}(V)$ the maximal compact subgroup of isometries that stabilize $\Xi$.

Denote by  
\[
\mathcal{H}(V) = C_c ( K \backslash \mathrm{U}(V) / K , \Q  )
\]
the spherical Hecke algebra of  compactly supported $K$-bi-invariant $\Q$-valued functions on $\mathrm{U}(V)$.
For any even integer $2 \le t \le n$, define the \emph{standard Hecke function}  $f_t^\mathrm{std}\in \mathcal{H}(V)$ as  the characteristic function of the double coset 
\[
K \begin{pmatrix}
\pi I_{t/2} & & \\
& I_{n-t} & \\
& & \pi^{-1} I_{t/2}
\end{pmatrix} K  \subset \mathrm{U}(V).
\]
The $\Q$-algebra  $\mathcal{H}(V)$ is isomorphic to a polynomial algebra in $\lfloor n/2 \rfloor$ variables, and the standard Hecke functions form a polynomial basis.  For notational convenience, we extend the definition also to $t=0$ by letting $f_0^\mathrm{std}\in \mathcal{H}(V)$ be the characteristic function of $K$.

In \cite[Definition 4.1.1]{LRZ}, one finds the construction of  \emph{atomic Hecke functions}  
\[
f_t^\LRZ  \in \mathcal{H}(V),
\]
(denoted $\varphi_t$ in loc.~cit.) indexed again by even integers $2 \le t \le n$.   
Each atomic Hecke function has  the form
\[
f^\LRZ_t = f_t^\mathrm{std} + \sum_{   \substack{ 0 \le t' < t \\  t' \    \mathrm{even} }}  m(t',t) f_{t'}^\mathrm{std}
\]
for integers $m(t',t)$, and so the atomic Hecke functions also form a polynomial basis of $\mathcal{H}(V)$.

The point of introducing these atomic Hecke functions is that, unlike the standard Hecke functions,  their construction has a geometric analogue:  for every even integer $2 \le t \le n$, there is an  \emph{atomic Hecke correspondence}
\begin{equation}\label{atomic correspondence}
\mathbb{T}_t^\LRZ : K_0( \mathcal{N}_n ) \to  K_0( \mathcal{N}_n)
\end{equation}
on the Grothendieck group of complexes of vector bundles on the Rapoport-Zink formal scheme $\mathcal{N}_n$ defined in the previous subsection.
These are the correspondences denoted by $\mathbb{T}_t$  in  \cite[ (5.5.3)]{LRZ}.
It is conjectured by Li-Rapoport-Zhang that the atomic Hecke correspondences pairwise commute.
Assuming this conjecture, one obtains  a  $\Q$-algebra homomorphism 
\[
\mathcal{H}(V)  \map{f_t^\LRZ \mapsto  \mathbb{T}^\LRZ_t}  \End( K_0( \mathcal{N}_n)_\Q  )  .
\]

To avoid assuming this commutativity conjecture, we introduce a noncommutative polynomial ring $\widetilde{\mathcal{H}}(V)$ in variables $y_t$,  indexed by even integers $2\le t\le n$, and define  $\Q$-algebra maps
\begin{equation}\label{atomic map}
\begin{tikzcd}
&  { \widetilde{\mathcal{H}}(V) }   \ar[dl  ,  "   y_t \mapsto f_t^\LRZ  "  ' ]  \ar[dr , " y_t\mapsto  \mathbb{T}^\LRZ_t" ]     \\
{ \mathcal{H}(V)  }  & & { \End( K_0( \mathcal{N}_n)_\Q  )  . } 
\end{tikzcd}
\end{equation}
Of course when  $n=2$ we have $\widetilde{\mathcal{H}}(V)  = \mathcal{H}(V)$.

Let $A \subset \mathcal{N}_n$ be any closed formal subscheme.  Using  a geometric correspondence 
\[
\begin{tikzcd}
&   { \mathcal{T} ^{\le t } }  \ar[dl] \ar[dr] \\ 
{  \mathcal{N}_n }  & & {  \mathcal{N}_n , }
\end{tikzcd}
\]
 Li-Rapoport-Zhang define another closed formal subscheme $\mathcal{T}^{ \le t} (A) \subset \mathcal{N}_n$, and refine  \eqref{atomic correspondence}  to a homomorphism
\[
\mathbb{T}_t^\LRZ : K^A_0( \mathcal{N}_n ) \to  K^{ \mathcal{T} ^{ \le t} (A)  } _0( \mathcal{N}_n).
\]
Directly from the definition \cite[(5.5.1)]{LRZ}, one sees that for any $d$-tuple $x \in \mathbb{V}^d$ one has
$
\mathcal{T} ^{ \le t} ( \mathcal{Z}(x) )   \subset \mathcal{Z}(\pi^2 x) .
$
In this way, any $f \in \widetilde{\mathcal{H}}(V)$ determines a homomorphism
\begin{equation}\label{good correspondence support}
f : K^{ \mathcal{Z}(x) }_0( \mathcal{N}_n ) \to  K^{ \mathcal{Z} (\pi^r x)  } _0( \mathcal{N}_n)
\end{equation}
for all $r\gg 0$.

%%%%%%%%%%%%%%%%%%%%%%%%%%%

\subsection{The conjecture}

%%%%%%%%%%%%%%%%%%%%%%%%%%%%

For any nonsingular $T\in \Herm_n(F')$ and any $\varphi\in S(V^n)$ we have the Whittaker function $W_T(s, \varphi)$ of \S \ref{ss:whittaker}, determined by an unramified additive character $\psi : F \to \C^\times$. 
 If $\varphi$ is the characteristic function of a compact open subset $C \subset V^n$, then 
\[
W_T(s , \mathbf{1}_C ) =  \Vol(C) \cdot \Den_T(s,C)
\]
by Proposition \ref{prop:whittaker}.  Note that  the Haar measure on $V$ in that proposition is the one for which our fixed self-dual lattice $\Xi \subset V$ has volume $1$.

If  $x \in \mathbb{V}^n$ is a tuple whose moment matrix  $T=\tau(x)$ is nonsingular, then   $\eta(\det(T)) = -1$.
This implies that there is no tuple  in $V^n$  with moment matrix $T$, 
and so by Remark \ref{rem:no represent}, we have $\Den_T(C)=0$ for \emph{every} compact open subset $C \subset V^n$.  
Equivalently,   $W_T(s, \varphi)$ vanishes at $s=0$ for every choice of $\varphi\in S(V^n)$.
In the special case where $\varphi$ is the characteristic function of $\Xi^n$, the arithmetic Siegel-Weil formula, conjectured by Kudla-Rapoport \cite{KR1} and proved by Li-Zhang \cite{LZ1}, relates the derivative of the Whittaker function at $s=0$ to the Euler-Poincar\'e characteristic
of the derived  Kudla-Rapoport cycle \eqref{derived KR}.

\begin{theorem}[Arithmetic Siegel-Weil  \cite{LZ1}]\label{thm:ASW}
If $x \in \mathbb{V}^n$ is any tuple whose moment matrix $T=\tau(x)$ is nonsingular, then 
\[
\chi ( \mathcal{Z}^\mathbb{L} (x)  )   \cdot   c(n) \cdot   \log(q^2)  
=W'_T(0, \bm{1}_{ \Xi^ n }  ) ,
\]
where $c(n)=  \prod_{i=1}^n (   1- (-q)^{-i}  )$.
\end{theorem}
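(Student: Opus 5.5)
This theorem is quoted from Li--Zhang \cite{LZ1}; within this paper it serves as the known input on which Conjecture \ref{conj:sphericalASW} builds, so the paper does not reprove it. Below is a sketch of how I would organize a proof if one were required, following the strategy of \cite{LZ1}.

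The argument is an induction on $n$ and on the lattice $L=\langle x_1,\ldots,x_n\rangle\subset\mathbb{V}$, using an uncertainty-principle argument on both sides. Since both sides depend only on $L$, write $\mathrm{Int}(L)=\chi(\mathcal{Z}^\mathbb{L}(x))$. On the analytic side, write
\[
\partial\mathrm{Den}(L)=\frac{W'_T(0,\bm{1}_{\Xi^n})}{c(n)\log(q^2)}.
\]
By Proposition \ref{prop:whittaker}, $W_T(s,\bm{1}_{\Xi^n})$ is proportional to the density polynomial $\Den_T(s,\Xi^n)$. The classical formulas (Cho--Yamauchi, recovered by the lattice-counting methods of Proposition \ref{prop:integral densities}) then express $\partial\mathrm{Den}(L)$ as a sum over integral lattices $L'\supset L$ of a primitive local density. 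The statement to prove is $\mathrm{Int}(L)=\partial\mathrm{Den}(L)$ for all $L$ of full rank.

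Fix $L^\flat$ of rank $n-1$ with nondegenerate span. Regard both $\mathrm{Int}(L^\flat+\langle x\rangle)$ and $\partial\mathrm{Den}(L^\flat+\langle x\rangle)$ as functions of $x\in\mathbb{V}\smallsetminus L^\flat_F$.

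The first step is a horizontal/vertical decomposition on the geometric side. The derived cycle $\mathcal{Z}^\mathbb{L}(L^\flat)$ splits as a horizontal part plus a vertical part. The horizontal part is a sum of quasi-canonical-lifting curves, indexed by lattices $M^\flat\supset L^\flat$ via Gross's canonical liftings. Its intersection with $\mathcal{Z}(x)$ is computed by Gross--Keating and matches the analytic horizontal term, $\partial\mathrm{Den}_{\mathscr{H}}$. Hence the difference
\[
\mathrm{Int}_{\mathscr{V}}(x)-\partial\mathrm{Den}_{\mathscr{V}}(x)
\]
is what remains to be shown to vanish.

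The second step is to show that each vertical term extends to a Schwartz function on $\mathbb{V}$ (or on $L^{\flat\perp}_F$). Both must also be invariant under the Fourier transform up to sign. Geometrically this invariance uses Tate's theorem, the action of Deligne--Lusztig varieties and a Fourier transform of $\mathcal{Z}^\mathbb{L}$ supported on the Bruhat--Tits strata. Analytically it comes from a functional equation / local density identity.

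The third step uses the uncertainty principle. The difference is a Fourier-invariant Schwartz function that vanishes when $\mathrm{val}(x)<0$. Such a function vanishes identically.

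The main obstacle is the geometric Fourier invariance of the vertical part, which requires controlling the vertical components via Deligne--Lusztig varieties and proving the Tate conjecture for them. Base cases ($n=1$, or small valuations) come from \cite{KR1}.
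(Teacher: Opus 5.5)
Deferring to \cite{LZ1} is exactly what the paper does: Theorem~\ref{thm:ASW} is quoted there without proof. Your use of Proposition~\ref{prop:whittaker} to identify the right-hand side with the derivative of $\Den_T(s,\Xi^n)$ is consistent with the paper. The overall shape of your Li--Zhang outline is broadly right, but Step~3 as you state it is false. It is also the one step where the induction you announce at the start must be used, and your sketch never uses the induction hypothesis there.

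Write $\mathbb{V}^{\ge j}=\{x:\mathrm{val}(h(x,x))\ge j\}$. Once $n\ge 3$, a Schwartz function with $\widehat{\phi}=-\phi$ that vanishes on $\{x:\mathrm{val}(x)<0\}$ need not vanish. Take a vertex lattice $\Lambda\subset\mathbb{V}$ of type $3$, i.e.\ $\Lambda\subset\Lambda^\vee\subset\pi^{-1}\Lambda$ with $\dim_{k'}\Lambda^\vee/\Lambda=3$, and set $\phi=\mathbf{1}_{\Lambda^\vee\cap\mathbb{V}^{\ge 0}}-q^{2}\,\mathbf{1}_{\Lambda}$. This $\phi$ has three relevant properties:
\begin{itemize}
\item it is $\Lambda$-invariant, hence $L^\flat$-invariant for any $L^\flat\subset\Lambda$;
\item it vanishes off $\Lambda^\vee\cap\mathbb{V}^{\ge 0}$;
\item it descends to $\mathbf{1}_C-q^{2}\delta_0$ on the nondegenerate Hermitian $k'$-space $\Lambda^\vee/\Lambda$ (form induced by $\pi h$), where $C$ is the isotropic cone.
\end{itemize}
Normalize the finite Fourier transform by $\mathrm{vol}(\Lambda)=q^{-3}$. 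A Gauss sum computation then gives $\widehat{\mathbf{1}_C}=q^{2}\delta_0-\mathbf{1}_C+q^{-1}$ and $\widehat{\delta_0}=q^{-3}$ (the last terms being constant functions). Hence $\widehat{\phi}=-\phi\neq 0$.

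What the uncertainty principle actually needs is vanishing on all of $\mathbb{V}^{\le 0}$, including $x$ with $h(x,x)\in\co_F^\times$. This is exactly what the induction on $n$ supplies. If $L^\flat+\langle x\rangle$ is integral, it equals $\langle x\rangle\oplus L'$. Geometrically $\mathcal{Z}(x)\iso\mathcal{N}_{n-1}$. Analytically one has the matching cancellation of densities: apply Proposition~\ref{prop:reduction} with $N_1=N_2=\Xi^{[r]}$ and use $c(n)=c(n-1)(1-(-q)^{-n})$. Together with the horizontal identity, this gives $\phi(x)=0$.

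Given that input, the uncertainty lemma is elementary. Suppose $\phi\neq 0$ is $\Lambda$-invariant and supported in $\mathbb{V}^{\ge 1}$. Comparing $h(x\pm\lambda,x\pm\lambda)$, with $p$ odd, shows $\mathrm{supp}\,\phi\subset\pi\Lambda^\vee$. So $\widehat{\phi}=-\phi$ is $\pi^{-1}\Lambda$-invariant. Iterating makes $\phi$ invariant under every $\pi^{-k}\Lambda$, contradicting compact support.

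A lesser point about Step~2: the geometric input is the Tate conjecture for the Deligne--Lusztig varieties $\mathcal{V}_\Lambda$. It is used to replace the vertical $1$-cycle by combinations of type-$3$ strata; there is no ``action'' of these varieties.
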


For any $d \ge 1$, the Hecke algebra $\mathcal{H}(V)$  acts on the space  $S(V^d)^K$ of $K$-fixed Schwartz functions via   
\[
(f * \varphi)(x) =  \frac{1}{ \Vol(K)}  \int_{ \mathrm{U}(V) }  f (g)  \varphi(g^{-1} x) \, dg .
\]
This induces an action also of $\widetilde{\mathcal{H}}(V)$, via the leftward arrow in \eqref{atomic map}.
It is natural to expect that a stronger version of Theorem \ref{thm:ASW} also holds, incorporating this action and the action of $\widetilde{\mathcal{H}} (V)$ on $K_0(\mathcal{N}_n)$ via the rightward arrow in \eqref{atomic map}.
For example, for any $x \in \mathbb{V}^n$ with nonsingular moment matrix $T$, and any $f \in \widetilde{\mathcal{H}}(V)$,  one expects
\begin{equation}\label{pre conjecture one}
\chi  (  f*  \mathcal{Z} {}^\mathbb{L} (x)  )  
\stackrel{?}{=}     \frac{  W'_T \big(0, f* \bm{1}_\Xi^{\otimes n} \big) }{   c(n)\cdot \log(q^2)}      .
\end{equation}
Because the Euler-Poincar\'e characteristic is not well defined on $K_0( \mathcal{N}_n)_\Q$, the $f*$ on the left should be understood as the endomorphism   \eqref{good correspondence support}.

In the same spirit, given a tuple  $x=(x_1,\ldots, x_n) \in \mathbb{V}^n$ with  nonsingular moment matrix $T$, and  any $f_1,\ldots, f_n \in \widetilde{\mathcal{H}}(V)$,   one expects
\begin{equation}\label{pre conjecture two}
\chi \big( f_1  \mathcal{Z} {}^\mathbb{L}(x_1)  \otimes^{\mathbb{L}}  \cdots \otimes^{\mathbb{L}}  f_n \mathcal{Z} {}^\mathbb{L}(x_n)  \big)
\stackrel{?}{=} 
\frac{  W'_T  \big(0, f_1 \bm{1}_\Xi \otimes \cdots \otimes  f_n \bm{1}_\Xi \big) } { c(n)\cdot   \log(q^2)   }  
\end{equation}
where we now omit the $*$'s for simplicity.  Here the left-hand side is understood as above: using \eqref{good correspondence support}, we obtain classes 
\[
f_i \mathcal{Z}^\mathbb{L}(x_i) \in  K^{ \mathcal{Z} (\pi^r x_i)  } _0( \mathcal{N}_n)
\]
for some $r \gg 0$.  The tensor product of these classes lives in $K^{ \mathcal{Z} (\pi^r x)  } _0( \mathcal{N}_n)$, so has a well-defined Euler-Poincar\'e characteristic.

\begin{remark}
Note that \eqref{pre conjecture one} and \eqref{pre conjecture two} are not equivalent.  
The  action of $\mathcal{H}(V)$ on Schwartz functions is not compatible with tensor products, in the sense that under the canonical isomorphism
\[
S(V^{d_1})^K \otimes S(V^{d_2})^K \iso S( V^{d_1+d_2} )^K
\]
one typically has
\[
( f  \varphi_1) \otimes (  f \varphi_2) \neq  f ( \varphi_1 \otimes \varphi_2)   .
\]
There is a similar lack of compatibility of the Hecke correspondences under tensor products of classes in  $K_0$-groups.
\end{remark}

Interpolating between the  two extreme cases \eqref{pre conjecture one} and \eqref{pre conjecture two}, we make the following conjecture.

\begin{conjecture}\label{conj:sphericalASW}
Let $d_1+\cdots+d_r =n$ be a partition of $n$.
Let  $x=(x_1,\ldots, x_r) \in \mathbb{V}^n$ be a concatenation of tuples $x_i \in \mathbb{V}^{d_i}$, and assume that the moment matrix $T=\tau(x)$ is nonsingular.  For any $f_1,\ldots, f_r \in \widetilde{\mathcal{H}}(V)$ we have the intersection formula
\begin{align*}  
 \chi \big( f_1  \mathcal{Z} {}^\mathbb{L}(x_1)  \otimes^{\mathbb{L}}  \ldots \otimes^{\mathbb{L}}   f_r \mathcal{Z} {}^\mathbb{L}(x_r)  \big)
=
 \frac{  W'_T \big(0, f_1 \bm{1}_{\Xi^{d_1}} \otimes \cdots \otimes  f_r \bm{1}_{\Xi^{ d_r} } \big)  }{ c(n)\cdot    \log(q^2)  } .
\end{align*}
\end{conjecture}

Of course Theorem \ref{thm:ASW} is the special case in which all $f_i=1$.

%%%%%%%%%%%%%%%%%%%%%%%%%%%

\subsection{A nontrivial case of the conjecture}
\label{ss:ASW easy case}

%%%%%%%%%%%%%%%%%%%%%%%%%%%%

Now we take $n=2$.  In this case the Hermitian space $V$ is a hyperbolic  plane, and the Rapoport-Zink formal scheme $\mathcal{N}_2$ is the formal spectrum of a power series ring $W[[ t]]$ in one variable.    

Suppose we are given linearly independent vectors  $x_1,x_2 \in \mathbb{V}$ satisfying $h(x_i,x_i) \in \co_F^\times$, and denote by 
\[
T = \begin{pmatrix}
h(x_1,x_1) & h(x_1,x_2) \\
h(x_2,x_1) & h(x_2,x_2) 
\end{pmatrix}
\in \Herm_2( F' ) 
\]
the  corresponding moment matrix.  As 
$
\eta(\det(T)) = \eta(\det(\mathbb{V})) =-1 ,
$
we see that $\ord_F(\det(T))$ must be  odd.  This  implies that $T \in \Herm_2(\co_{F'})$, and that  
\[
\Delta \define  \frac{1+\ord_F(\det(T)) }{2}
\]
 is a positive integer.

Our goal in \S \ref{ss:ASW easy case} is to prove the following  special case of Conjecture \ref{conj:sphericalASW}.
For a nonzero vector $x \in \mathbb{V}$ the Kudla-Rapoport cycle $\mathcal{Z}(x)$ is a Cartier divisor on $\mathcal{N}_2$, and
\[
\mathcal{Z}^\mathbb{L}(x)  =[  \co_{ \mathcal{Z}(x)  }  ]  \in K_0^{ \mathcal{Z}(x) } ( \mathcal{N}_2) 
\]
 is the class in the Grothendieck group of (a resolution by vector bundles of) the structure sheaf of $\mathcal{Z}(x)$, regarded as a coherent sheaf on $ \mathcal{N}_2$.
 We remind the reader that in our case of $n=2$, the leftward arrow in \eqref{atomic map} is an isomorphism $\widetilde{\mathcal{H}}(V) \iso \mathcal{H}(V)$.  Hence we drop the tilde from the notation.

\begin{theorem} \label{thm:SASW}
For any Hecke functions $f_1,f_2 \in \mathcal{H}(V)$ we have
\[
\chi  \big( f_1 \mathcal{Z}^\mathbb{L} (x_1)  \otimes^{\mathbb{L}}  f_2 \mathcal{Z}^\mathbb{L}(x_2)   \big)  \cdot  \log( q^2 )   
= 
  \frac{  W'_T(0, f_1\bm{1}_\Xi  \otimes   f_2\bm{1}_\Xi )   }{  (   1   +    q^{-1}  )  ( 1 - q^{-2} )   } .
  \]
\end{theorem}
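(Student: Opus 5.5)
By bilinearity of both sides in $(f_1,f_2)$, it is enough to prove the identity when each $f_j$ runs over a basis of $\mathcal{H}(V)$. For $n=2$ that algebra is $\Q[f_2^\LRZ]$, so I will work with the basis of powers $(f_2^\LRZ)^{a}$, or equivalently with the characteristic functions of the double cosets $K\,\mathrm{diag}(\pi^{a},\pi^{-a})K$. The proof is a direct computation of both sides as explicit functions of $a_1$, $a_2$ and $\Delta$, followed by a comparison.

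\emph{Analytic side.} First I will describe $f\bm{1}_\Xi$ concretely. The orbit of $\Xi$ under $K\,\mathrm{diag}(\pi^{a},\pi^{-a})K$ is the set of self-dual lattices $M\subset V$ with $\Xi/(\Xi\cap M)\iso\co_{F'}/\pi^{a}\co_{F'}$. Since $V$ is a hyperbolic plane, this is the same as the relative position \eqref{close lattices}. Hence
\[
(\mathrm{char}\ \text{of the double coset})*\bm{1}_\Xi=\sum_M \bm{1}_M,
\]
where the sum runs over such lattices $M$ at distance $a$ from $\Xi$. Next I will use this for both functions. Then $f_1\bm{1}_\Xi\otimes f_2\bm{1}_\Xi$ becomes a sum of $\bm{1}_{M\times N}$, with $M$ at distance $a_1$ and $N$ at distance $a_2$ from $\Xi$. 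By Proposition \ref{prop:whittaker}, in the normalization used in the conjecture, we have $W_T(s,\bm{1}_{M\times N})=\Den_T(s,M\times N)$ since $\Vol(M\times N)=1$. The relative distance $m$ between $M$ and $N$ is then given by combinatorics on the Bruhat--Tits tree of $\mathrm{U}(V)$, which is the tree of self-dual lattices with valence $q^3+1$. I will count the pairs $(M,N)$ with each given distance $m$. Corollary \ref{cor:poly 2x2} gives each term explicitly. Differentiating at $s=0$ then yields a closed formula: the $m=0$ part contributes the classical term $(1+q^{-1})(1-q^{-2})\Delta\log(q^2)$, and each $m>0$ term contributes $q^{-2m}(1-q^{-2})\log(q^2)$ times an explicit expression in $\Delta$ and $m$. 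This relies on $\ord_F(\det T)=2\Delta-1$ being odd, so that the alternating sum vanishes at $s=0$. Finally I will convert from the double-coset basis to the atomic basis using the relation $f_2^\LRZ=f_2^{\mathrm{std}}+m(0,2)f_0^{\mathrm{std}}$ from \cite{LRZ}.

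\emph{Geometric side.} On $\mathcal{N}_2=\mathrm{Spf}\,W[[t]]$ every class involved is a divisor. The Li--Rapoport--Zhang correspondence $\mathbb{T}_2^\LRZ$ sends $\mathcal{Z}(x)$ to an explicit combination of the quasi-canonical divisors making up $\mathcal{Z}(\pi x)$ and $\mathcal{Z}(\pi^{2}x)$, in the notation of \cite[\S 5]{LRZ} for $n=2$. I will use the Kudla--Rapoport decomposition of $\mathcal{Z}(x)$ for $h(x,x)$ of valuation $2k$ into quasi-canonical lifting divisors $\mathcal{Z}_j$, $j\le k$. This expresses $f\mathcal{Z}(x)$ as an explicit $\Z$-combination of the $\mathcal{Z}_j$ attached to the line $F'x$. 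The intersection numbers of quasi-canonical divisors attached to $x_1$ and $x_2$ are known from \cite{KR1} as functions of $\Delta$ and the levels. Summing these gives $\chi$ as an explicit function of $a_1,a_2,\Delta$.

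\emph{Main obstacle.} The hard part is the bookkeeping that matches the two sides. On the analytic side one must correctly count pairs $(M,N)$ by mutual distance $m$ on the tree; this count also requires $V$ to be split and $q^3+1$ to be the valence. On the geometric side one must correctly identify $\mathbb{T}_2^\LRZ$ on divisors. I would first check the case $f_1=1$, $f_2=f_2^\LRZ$ to calibrate the constants, and then handle general powers by iterating, either through a recursion in $a$ or by reducing to a generating-function identity in $\Delta$.
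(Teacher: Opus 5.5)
Your skeleton matches the paper's: bilinearity, the double-coset basis $f_a^\circ$ with $a\ge b$ by symmetry, $f_a^\circ\bm{1}_\Xi=\sum_{M\sim_a\Xi}\bm{1}_M$, Corollary~\ref{cor:poly 2x2} for each pair, then a count of pairs by relative position. The gap is that your combinatorial model for the count is wrong, and with it the analytic side will not match the geometric one. For the split Hermitian plane over unramified $F'/F$, the Bruhat--Tits tree of $\mathrm{SU}(V)\iso\mathrm{SL}_2(F)$ is $(q+1)$-regular; valence $q^3+1$ belongs to unramified $\mathrm{U}(3)$. Its vertices are the vertex lattices of type $0$ (self-dual) and type $2$, and edges join the two types. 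The relative position of two self-dual lattices is \emph{half} their tree distance. The self-dual lattices by themselves do not form a tree. There are $q(q+1)$ of them in relative position $1$ with $\Xi$, and the $q+1$ self-dual neighbours of any type-$2$ vertex are pairwise in relative position $1$. So the geodesics $[\Xi\mapsto M]$ and $[\Xi\mapsto N]$ can separate at a type-$2$ vertex, and $r_{a,b}(m)\neq 0$ for \emph{every} $m$ with $a-b\le m\le a+b$, not just for $m\equiv a+b \pmod 2$. Both effects show up in the smallest cases. For $(a,b)=(1,0)$ the geometric side is $1$, and since $w_T'(0,1)=q^{-2}(1-q^{-2})\log(q^2)$ this forces $\#\{M\sim_1\Xi\}=q^2+q$. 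For $a=b=1$, the $q^3-q$ pairs with $m=1$ contribute exactly the summand $q-1$ needed to reach $q(q+1)\Delta+q$. The count therefore has to be done as in Lemma~\ref{lem:tree count}. Incidentally, for every $m>0$ one has exactly $w_T'(0,m)=q^{-2m}(1-q^{-2})\log(q^2)$. The reason is that the alternating sum has $2\Delta$ terms and $m+(1-m)q^{-2s}$ equals $1$ at $s=0$. So there is no further dependence on $\Delta$ or $m$.

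On the geometric side you leave the action of $\mathbb{T}_2^\LRZ$ unspecified. The input you need is \cite[Proposition 7.4.2, (7.4.6)]{LRZ}, which the paper uses in the form $\chi(f_a^\circ\mathcal{Z}^{\mathbb{L}}(x_1)\otimes^{\mathbb{L}}\mathcal{C})=\chi(\mathcal{Z}^{\mathbb{L}}(\pi^a x_1)\otimes^{\mathbb{L}}\mathcal{C})-\chi(\mathcal{Z}^{\mathbb{L}}(\pi^{a-1}x_1)\otimes^{\mathbb{L}}\mathcal{C})$ for $a\ge 1$. Numerically, $f_a^\circ$ carries $\mathcal{Z}(x_1)$ to its level-$a$ quasi-canonical piece. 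Apply this in both variables and combine with Proposition~\ref{KRdegree} by inclusion--exclusion. This gives Proposition~\ref{prop:intersection 2x2 final} directly in the $f_a^\circ$ basis, with no need to pass through powers of $f_2^\LRZ$. If you do pass through powers, the linear relation $f_2^\LRZ=f_1^\circ+(q+1)$ is not enough. You would also need the multiplication rule $f_2^\LRZ f_m^\circ=f_{m+1}^\circ+2qf_m^\circ+q^2f_{m-1}^\circ$ for $m\ge 1$. Finally, there is nothing to calibrate. Once the tree count and the LRZ formula are in place, both sides are completely determined and agree case by case.
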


We will use a result of Li-Rapoport-Zhang to reduce the calculation of the left-hand side of  Theorem \ref{thm:SASW} to the following result of Kudla-Rapoport.

\begin{proposition}[Kudla-Rapoport] \label{KRdegree}
 For any integers $a\ge b\ge 0$ we have
 \begin{align*}
\chi \big(    \mathcal{Z}^\mathbb{L} (   \pi^a x_1)  \otimes^\mathbb{L}     \mathcal{Z}^\mathbb{L}( \pi^b x_2) \big)
 & = 
 \sum_{\ell=0}^{ 2b} q^\ell (   a+  b    - \ell  +   \Delta   ) .
 \end{align*}
\end{proposition}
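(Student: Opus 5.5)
The plan is to reduce the statement to the Kudla-Rapoport intersection formula for two special divisors on $\mathcal{N}_2$ from \cite{KR1}. That formula expresses the intersection multiplicity in terms of the invariants $(a_1,a_2)$, $a_1\le a_2$, of the rank-two Hermitian $\co_{F'}$-lattice spanned by the two vectors. Here $a_1$ is the minimal valuation of a Hermitian norm of a vector of the lattice, and $a_1+a_2$ is the valuation of the determinant. I will use it in the form
\[
\chi\big(\mathcal{Z}^\mathbb{L}(y_1)\otimes^\mathbb{L}\mathcal{Z}^\mathbb{L}(y_2)\big)=\sum_{\ell=0}^{a_1} q^\ell\Big(\frac{a_1+a_2+1}{2}-\ell\Big),
\]
where $a_1+a_2$ is automatically odd for lattices in $\mathbb{V}$. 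I have not checked this normalization against \cite{KR1}, so the first task is to confirm it there. A consistency check: for the unshifted pair $a=b=0$ it gives $(0,2\Delta-1)\mapsto\Delta$, matching the claimed right-hand side.

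First I will justify using the naive intersection. Since $\mathcal{N}_2=\mathrm{Spf}\, W[[t]]$ is regular of dimension two, each $\mathcal{Z}(y_i)$ is a Cartier divisor cut out by a single equation $g_i$. The intersection $\mathcal{Z}(y_1)\cap\mathcal{Z}(y_2)$ is artinian because the moment matrix is nonsingular: the intersection is then proper over $W/\pi^\ell W$, and a proper closed formal subscheme of $\mathrm{Spf}\,W[[t]]$ is a point, so it has dimension $0$. Hence $g_1,g_2$ form a regular sequence, and the derived tensor product has no higher Tor. So $\chi$ equals $\mathrm{length}_W\, W[[t]]/(g_1,g_2)$, which is the quantity computed in \cite{KR1}. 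Recall also that $\mathcal{Z}(y)$ depends only on the lattice spanned by the $y_i$.

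Next I will compute the invariants of $L=\mathrm{span}(\pi^a x_1,\pi^b x_2)$ with $a\ge b$. Since $h(x_1,x_1)=t_1\in\co_F^\times$, Gram-Schmidt as in the proof of Proposition \ref{prop:main 2x2} gives $L(x_1,x_2)=\co_{F'}x_1\oplus \co_{F'}x_2'$ with $\ord_F h(x_2',x_2')=2\Delta-1$. The same argument applies with the roles of $x_1,x_2$ swapped, since $t_2$ is a unit too.

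In $L$, the vector $\pi^b x_2$ has norm of valuation exactly $2b$. I must check that no vector of $L$ has smaller norm valuation. Write $v=\alpha\pi^a x_1+\beta\pi^b x_2$ and expand $h(v,v)$. Because $\det T$ has positive valuation, $T$ reduces mod $\pi$ to a rank-one matrix, and for rank one $|h(x_1,x_2)|=1$. Then the cross terms have valuation at least $a+b\ge 2b$, so $h(v,v)$ has valuation at least $2b$; the point is that $T$ is never anisotropic-definite mod $\pi$ in a way that produces cancellation below $2b$. Since $\pi^b x_2$ has unit-times-$\pi^{2b}$ norm, it splits off: $L=\co_{F'}\pi^b x_2\oplus L'$, and $\det$ forces $L'$ to have norm valuation $2a+2b+2\Delta-1-2b=2a+2\Delta-1$. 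So $(a_1,a_2)=(2b,\,2a+2\Delta-1)$, whenever $2b\le 2a+2\Delta-1$, which holds because $a\ge b$ and $\Delta\ge1$.

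Substituting gives $\frac{a_1+a_2+1}{2}-\ell=a+b+\Delta-\ell$ and upper limit $2b$, which is exactly the asserted formula. The main obstacle is the invariant computation, specifically verifying that $2b$ really is the minimal norm valuation. I will handle it by writing $T$ mod $\pi$ explicitly as a rank-one Hermitian matrix and bounding $h(v,v)$ term by term, using that $\ord_F\det T\ge1$. The secondary risk is pinning down the exact normalization of the formula from \cite{KR1}, which I will confirm against the base case above.
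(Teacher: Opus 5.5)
Your proposal is correct and follows the paper's proof. The paper likewise notes that the lattice spanned by $\pi^a x_1,\pi^b x_2$ has a basis with moment matrix $\mathrm{diag}(\pi^{2a}\det(T),\pi^{2b})$, i.e.\ invariants $(2b,\,2a+2\Delta-1)$, and then reads off the length $\frac12\sum_{\ell=0}^{2b}q^\ell\big(1+2a+2b-2\ell+\ord_F\det(T)\big)$ of the Artinian intersection from \cite[Theorem 1.1(iv)]{KR1}. The step you flag as the main obstacle is actually immediate: every term of $h(v,v)$ and of $h(v,\pi^b x_2)$ has valuation at least $2b$ by the ultrametric inequality (cancellation can only raise valuations), and this is exactly what lets you split off $\pi^b x_2$.
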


\begin{proof}
The assumptions  $h(x_i,x_i) \in \co_F^\times$ and $a \ge b$  imply that  the $\co_{F'}$-lattice spanned by $\pi^a x_1$ and $\pi^b x_2$ admits a basis  with moment matrix 
 \[
  \begin{pmatrix} \pi^{2 a } \det(T) &  0  \\  0   &  \pi^{2b}   \end{pmatrix} .
 \]
It therefore follows from \cite[Theorem 1.1(iv)]{KR1}   that the scheme-theoretic intersection $\mathcal{Z}  ( \pi^a x_1 ) \cap \mathcal{Z}( \pi^b x_2 )$ is Artinian, and 
\begin{align*}
\chi \big(    \mathcal{Z}^\mathbb{L}  (   \pi^a x_1)  \otimes^\mathbb{L}     \mathcal{Z}^\mathbb{L} ( \pi^b x_2) \big)
&  = \deg \big(  \mathcal{Z}  ( \pi^a x_1 ) \cap \mathcal{Z}( \pi^b x_2 )   \big)   \\
& =   \frac{1}{2} \sum_{\ell=0}^{ 2b} q^\ell (  1+ 2a+ 2 b    -2 \ell    + \ord_F(\det(T))    ) ,
 \end{align*}
as desired.
\end{proof}

For any $m \ge 0$, let $f_m^\circ \in \mathcal{H}(V)$ be the characteristic function of the double coset
\begin{equation}\label{cartan coset}
K   \begin{pmatrix} \pi^m &  \\  & \pi^{-m}  \end{pmatrix}   K  \subset \mathrm{U}(V).
\end{equation}
By the Cartan decomposition, the functions $f_0^\circ, f_1^\circ, f_2^\circ, \ldots$ form a $\Q$-basis of $\mathcal{H}(V)$.
By the proof of \cite[Lemma 7.4.1]{LRZ}, they are related to the atomic Hecke function $f_2^\LRZ$  from \S \ref{ss:spherical} by
\[
 f^\LRZ_2  = f^\circ_1 + (q+1) ,
\]
and satisfy the  recursion relation  
\[
 f^\circ_{m+1}  = 
f_2^\LRZ  f^\circ_m  - 2q f^\circ_m -    q^2 f^\circ_{m-1} .
\]

\begin{proposition}\label{prop:intersection 2x2 final}
 For any integers $a\ge b\ge 0$ we have
\[
\chi \big(   f_a^\circ   \mathcal{Z}^\mathbb{L}  (   x_1)  \otimes^\mathbb{L}   f_b^\circ  \mathcal{Z}^\mathbb{L}  ( x_2) \big)   
= \begin{cases}
\Delta  & \mbox{if } a=b=0 \\
 1  & \mbox{if } a>b=0 \\
q^{2b-1}    (   q  + 1  )    \Delta   +   q^{2b-1}          & \mbox{if } a=b >0 \\
q^{2b-1}     (q+1 )   & \mbox{if } a>b>0.
\end{cases}
\]
\end{proposition}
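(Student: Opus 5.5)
Prove this by computing how the atomic correspondence $\mathbb{T}^\LRZ_2$ acts on Kudla-Rapoport divisors of $\mathcal{N}_2$. The key input is the Li-Rapoport-Zhang formula for $n=2$: for a nonzero $x\in\mathbb{V}$, the divisor $\mathbb{T}^\LRZ_2\mathcal{Z}(x)$ should equal $\mathcal{Z}(\pi x)$ plus a multiple of $\mathcal{Z}(x)$ (or of $\mathcal{Z}(x/\pi)$ when that makes sense), as classes in $K_0^{\mathcal{Z}(\pi x)}(\mathcal{N}_2)$. The guiding heuristic is the analytic identity $f^\circ_1\mathbf{1}_\Xi=\mathbf{1}_{\pi^{-1}\Xi}-\ldots$, translated geometrically by $\mathcal{Z}(\pi^{-1}\cdot)\leftrightarrow\mathcal{Z}(\pi\,\cdot)$. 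I would extract the precise statement from \cite[\S7]{LRZ}, where the $n=2$ case is worked out in the proof of \cite[Lemma 7.4.1]{LRZ}. Combining it with $f^\LRZ_2=f^\circ_1+(q+1)$ and the recursion $f^\circ_{m+1}=f^\LRZ_2f^\circ_m-2qf^\circ_m-q^2f^\circ_{m-1}$, I then prove by induction on $m$ a closed formula
\[
f^\circ_m\mathcal{Z}^\mathbb{L}(x)=\mathcal{Z}^\mathbb{L}(\pi^mx)-c_m\,\mathcal{Z}^\mathbb{L}(\pi^{m-1}x)\quad(m\ge1),
\]
with $f^\circ_0\mathcal{Z}^\mathbb{L}(x)=\mathcal{Z}^\mathbb{L}(x)$. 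The constant $c_m$ is to be determined; I expect $c_m=q^2$ for $m\ge2$ and $c_1=q$ (or similar). The induction step only uses that $\mathbb{T}^\LRZ_2$ is additive and that its value on $\mathcal{Z}(\pi^jx)$ is given by the same LRZ formula. Since $\mathcal{Z}(x)$ depends only on the lattice spanned by $x$, the formula applies uniformly to every $\pi^jx$.

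Next I expand the intersection bilinearly. Substituting the closed formula, $\chi(f^\circ_a\mathcal{Z}^\mathbb{L}(x_1)\otimes^\mathbb{L}f^\circ_b\mathcal{Z}^\mathbb{L}(x_2))$ becomes a combination of at most four terms $\chi(\mathcal{Z}^\mathbb{L}(\pi^{a'}x_1)\otimes^\mathbb{L}\mathcal{Z}^\mathbb{L}(\pi^{b'}x_2))$ with $a'\in\{a,a-1\}$ and $b'\in\{b,b-1\}$. Each of these is evaluated by Proposition \ref{KRdegree}. The statement there assumes $a'\ge b'$, but the case $a'<b'$ (which occurs when $a=b$ and $(a',b')=(a-1,b)$) follows from the same argument with the roles of $x_1$ and $x_2$ exchanged, because the hypothesis $h(x_i,x_i)\in\co_F^\times$ is symmetric. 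With the $K$-theoretic expansion in hand, the four cases reduce to finite geometric sums:
\begin{itemize}
\item $a=b=0$ gives $\Delta$ directly.
\item $a>b=0$ gives $(a+\Delta)-c_a(a-1+\Delta)$ or similar, and this should collapse to $1$.
\item $a=b>0$ and $a>b>0$ give telescoping sums of the form $\sum_\ell q^\ell(\cdots)$, in which the linear-in-$a$ terms cancel.
\end{itemize}
The cancellation of the linear terms serves as a consistency check on the constants $c_m$. I would determine $c_m$ either from LRZ directly or, if there is any ambiguity, by forcing the $a>b=0$ case to be independent of $a$.

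The main obstacle is the first step: pinning down precisely how $\mathbb{T}^\LRZ_2$ acts on $\mathcal{Z}(x)$ in $K$-theory, including multiplicities and any exceptional behaviour when $x$ has unit norm (the base of the induction, $m=1$). I would first try to read it off from the LRZ description of $\mathcal{T}^{\le2}$ as the correspondence of lattices $\Lambda\subset\Lambda'$ with $\pi\Lambda'\subset\Lambda$. Its fibre over a point of $\mathcal{Z}(x)$ lies in $\mathcal{Z}(\pi x)$, and counting the $q+1$ neighbours together with their pushforward degrees should give the coefficients. The remaining sums are routine.
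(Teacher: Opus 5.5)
\textbf{There is a genuine gap: the geometric input on which everything rests is left open, and the values you predict for it are wrong.}

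Your outer structure matches the paper's. You write $f_m^\circ\mathcal{Z}^\mathbb{L}(x)$ as a two-term combination, expand into at most four numbers $\chi(\mathcal{Z}^\mathbb{L}(\pi^{a'}x_1)\otimes^\mathbb{L}\mathcal{Z}^\mathbb{L}(\pi^{b'}x_2))$, and evaluate these by Proposition \ref{KRdegree}. Your remark that the case $a'<b'$ follows by exchanging $x_1$ and $x_2$ is correct, and the paper passes over it silently.

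What the paper actually uses, taken directly from \cite{LRZ} (Proposition 7.4.2 with (7.4.6)), is the following. For $a\ge 1$ and any class $\mathcal{C}$ whose support meets $\mathcal{Z}(\pi^a x_1)$ in an Artinian scheme,
\[
\chi\big(f_a^\circ\mathcal{Z}^\mathbb{L}(x_1)\otimes^\mathbb{L}\mathcal{C}\big)=\chi\big(\mathcal{Z}^\mathbb{L}(\pi^a x_1)\otimes^\mathbb{L}\mathcal{C}\big)-\chi\big(\mathcal{Z}^\mathbb{L}(\pi^{a-1}x_1)\otimes^\mathbb{L}\mathcal{C}\big).
\]
So $c_m=1$ for every $m\ge 1$, and no induction is needed. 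Your expected values $c_1=q$ and $c_m=q^2$ are wrong. Already for $a>b=0$ (with $a\ge 2$) they give $(a+\Delta)-q^2(a-1+\Delta)\neq 1$. Your fallback of fixing $c_m$ by forcing the $a>b=0$ answer to be independent of $a$ is circular: it uses the statement being proved to determine the input.

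If you want to derive $c_m=1$ yourself from $\mathbb{T}^\LRZ_2$ and the recursion, you need the action of $\mathbb{T}^\LRZ_2$ on the non-primitive divisors $\mathcal{Z}(\pi^j x)$ with $j\ge1$. Contrary to your claim that the formula "applies uniformly to every $\pi^jx$", this action is not uniform in $j$, and it is not of the one-extra-term shape you predict. Consistency with $c_m=1$ and the true Hecke relations forces, at least numerically,
\[
\mathbb{T}^\LRZ_2\mathcal{Z}(x)=\mathcal{Z}(\pi x)+q\,\mathcal{Z}(x),\qquad \mathbb{T}^\LRZ_2\mathcal{Z}(\pi^j x)=\mathcal{Z}(\pi^{j+1}x)+2q\,\mathcal{Z}(\pi^j x)+q^2\,\mathcal{Z}(\pi^{j-1}x)\quad(j\ge1).
\]
The second formula has terms in both $\mathcal{Z}(y)$ and $\mathcal{Z}(y/\pi)$, not one or the other. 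Proving it amounts to computing the Hecke action on quasi-canonical lifting divisors with multiplicities. That is the real content, and counting the $q+1$ neighbours, as you sketch, does not supply it.

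Separately, the recursion as quoted holds only for $m\ge 2$. At the identity coset, $f_2^\LRZ f_1^\circ$ takes the value $q(q+1)$ while $f_2^\circ$ vanishes. The tree gives
\[
f_1^\circ f_1^\circ=f_2^\circ+(q-1)f_1^\circ+q(q+1)f_0^\circ,
\]
so at $m=1$ the last coefficient is $q(q+1)$, not $q^2$. If you run your induction through $m=1$ with $q^2$, you get a spurious $+q\,\mathcal{Z}^\mathbb{L}(x)$ in your formula for $f_2^\circ\mathcal{Z}^\mathbb{L}(x)$.
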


\begin{proof}
If $a=b=0$ then
\[
f_a^\circ   \mathcal{Z}^\mathbb{L}  (   x_1)  \otimes^\mathbb{L}   f_b^\circ  \mathcal{Z}^\mathbb{L}  ( x_2) 
=  \mathcal{Z}^\mathbb{L}  (   x_1)  \otimes^\mathbb{L}     \mathcal{Z}^\mathbb{L}  ( x_2) ,
\]
and the claim is immediate from Proposition \ref{KRdegree}.
Thus we  assume  $a>0$.

Combining  \cite[Proposition 7.4.2]{LRZ} with \cite[(7.4.6)]{LRZ},  if $\mathcal{Z} \subset \mathcal{N}_2$ is any closed formal subscheme whose scheme-theoretic intersection with $\mathcal{Z}(\pi^a x_1)$ is Artinian, then for any class $\mathcal{C} \in K_0^{\mathcal{Z}}( \mathcal{N}_2)$ we have 
\[
\chi \big(    f_a^\circ \mathcal{Z}^\mathbb{L}( x_1) \otimes^\mathbb{L}  \mathcal{C}   \big) 
= 
\chi \big(     \mathcal{Z}^\mathbb{L}( \pi^a x_1) \otimes^\mathbb{L}  \mathcal{C}  \big) 
-\chi \big(    \mathcal{Z}^\mathbb{L}( \pi^{a-1} x_1) \otimes^\mathbb{L} \mathcal{C}   \big)  .
\]
We   apply this with $\mathcal{C}$ equal  to 
\[
f_b^\circ \mathcal{Z}^\mathbb{L} (x_2) \in K_0^{ \mathcal{Z}( \pi^r x_2) } (\mathcal{N}_2)
\]
with $r \gg 0$   to obtain
\begin{align}\label{hecke distribution}
& \chi \big(    f_a^\circ \mathcal{Z}^\mathbb{L}( x_1) \otimes^\mathbb{L}   f_b^\circ \mathcal{Z}^\mathbb{L} (x_2)\big)  \\
&  = 
\chi \big(     \mathcal{Z}^\mathbb{L}( \pi^a x_1) \otimes^\mathbb{L}  f_b^\circ \mathcal{Z}^\mathbb{L} (x_2) \big)   
-  \chi \big(    \mathcal{Z}^\mathbb{L}( \pi^{a-1} x_1) \otimes^\mathbb{L} f_b^\circ \mathcal{Z}^\mathbb{L} (x_2) \big)  .  \nonumber
\end{align}

If $b=0$, so that $f_b^\circ \mathcal{Z}^\mathbb{L}(x_2) = \mathcal{Z}^\mathbb{L}(x_2)$, 
then the claim follows immediately from \eqref{hecke distribution} and Proposition \ref{KRdegree}.
If $b>0$ then the same reasoning behind  \eqref{hecke distribution} also  shows 
\begin{align*}
& \chi \big(     \mathcal{Z}^\mathbb{L}( \pi^a x_1) \otimes^\mathbb{L}  f_b^\circ \mathcal{Z}^\mathbb{L} (x_2) \big)  \\
& =
\chi \big(     \mathcal{Z}^\mathbb{L}( \pi^a x_1) \otimes^\mathbb{L}   \mathcal{Z}^\mathbb{L} ( \pi^b x_2) \big) 
-\chi \big(     \mathcal{Z}^\mathbb{L}( \pi^a x_1) \otimes^\mathbb{L}   \mathcal{Z}^\mathbb{L} ( \pi^{b-1}x_2) \big) ,
\end{align*}
and similarly with $a$ replaced by $a-1$.  Thus  \eqref{hecke distribution} may be   expanded as
\begin{align*}
& \chi \big(  f_a^\circ   \mathcal{Z}^\mathbb{L} (x_1 )   \otimes^\mathbb{L} 
f_b^\circ  \mathcal{Z}^\mathbb{L}  ( x_2)  \big)  \\
 &   = 
 \chi\big(  \mathcal{Z}^\mathbb{L}  ( \pi^a x_1)     \otimes^\mathbb{L}  \mathcal{Z}^\mathbb{L}  ( \pi^b x_2)      \big) 
-  \chi \big(   \mathcal{Z}^\mathbb{L}  ( \pi^a x_1)     \otimes^\mathbb{L}  \mathcal{Z}^\mathbb{L}  ( \pi^{b-1} x_2)  \big)   \\
& \quad  -     \chi \big(  \mathcal{Z}^\mathbb{L} ( \pi^{a-1} x_1)    \otimes^\mathbb{L}   \mathcal{Z}^\mathbb{L} ( \pi^b x_2)  \big) 
+  \chi \big(  \mathcal{Z}^\mathbb{L} ( \pi^{a-1} x_1)    \otimes^\mathbb{L}   \mathcal{Z}^\mathbb{L} ( \pi^{b-1} x_2)  \big) ,
\end{align*}
 and again the claim follows from    Proposition  \ref{KRdegree}.
\end{proof}

It remains to compute $W_T(s, f_1\bm{1}_\Xi  \otimes   f_2\bm{1}_\Xi )$.
We will do this by expressing this Whittaker function as a linear combination of the density polynomials computed in Corollary \ref{cor:poly 2x2}.

Suppose $M,N \subset V$ are self-dual lattices.  Using our assumption that $F'/F$ is unramified, it is easy to see that both $M$ and $N$ admit orthonormal bases, and hence lie in the same $\mathrm{U}(V)$-orbit. 
By the Cartan decomposition of $\mathrm{U}(V)$ into double cosets of the form \eqref{cartan coset}, it follows that there is an $m\in \Z_{\ge 0}$ for which 
\[
M/(M\cap N) \iso \co_{F'} / \pi^m \co_{F'} \iso N/(M\cap N)
\]
as $\co_{F'}$-modules.   
We call $m$ the \emph{relative position} of $M$ and $N$, and write 
\[
M\sim_m N.
\]
 The action of the $\Q$-basis $f_0^\circ,f_1^\circ, \ldots \in \mathcal{H}(V)$  on the characteristic function of our fixed self-dual lattice $\Xi \subset V $  is by 
\begin{equation}\label{hecke xi}
f_m^\circ *  \bm{1}_\Xi = \sum_{ M \sim_m \Xi }  \bm{1}_M.
\end{equation}

\begin{lemma}\label{lem:tree count}
Assume $a\ge b \ge 0$, and for any $m\ge 0$ denote by $r_{a,b}(m)$ the number of ordered pairs $(M,N)$ of self-dual lattices $M,N \subset V$ satisfying the three relations 
\[
M \sim_a \Xi , \quad N\sim_b \Xi , \quad M\sim_m N .
\]
If at least one of $a$, $b$, and $m$ is positive, then
\[
r_{a,b}(m) =
\begin{cases}
 q^{a+b+m }(1+q^{-1})  & \mbox{if } m=a \pm b \\
 q^{  a + b +  m }  (1-q^{-2}) & \mbox{if } a-b <  m < a+b \\
 0 & \mbox{otherwise}.
\end{cases}
\]
Obviously $r_{0,0}(0)=1$.
\end{lemma}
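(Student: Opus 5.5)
The plan is to identify the self-dual lattices in the hyperbolic plane $V$ with the even vertices of a $(q+1)$-regular tree, so that the relative position $m$ becomes half the tree distance. The formula for $r_{a,b}(m)$ then reduces to counting pairs of vertices on the tree with prescribed distances.

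\emph{Step 1: the tree.} Let $\mathcal{B}$ be the graph whose vertices are the self-dual lattices $L$ (those with $L^\vee = L$) together with the $\pi$-modular lattices $L$ (those with $L^\vee=\pi^{-1}L$). A self-dual $M$ is joined to a $\pi$-modular $L$ when $\pi M\subset L\subset M$. I will take $\mathcal{B}$ to be the Bruhat--Tits tree of $\mathrm{U}(V)$; since $\mathrm{SU}(V)\iso \mathrm{SL}_2(F)$, it is $(q+1)$-regular.

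I would verify the local structure directly. The $\pi$-modular neighbours of a self-dual $M$ correspond to $k'$-isotropic lines in $M_k$. A nondegenerate Hermitian plane over $k'$ has exactly $q+1$ of these, e.g.\ by Lemma \ref{lem:finite hermitian reps} with $t=0$ or by direct count. Dually, the self-dual neighbours of a $\pi$-modular $L$ correspond to the isotropic lines in $\pi^{-1}L/L$, with the form rescaled by $\pi$, so there are again $q+1$ of them.

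Next I would show that $M\sim_m N$ exactly when $d(M,N)=2m$. The Cartan decomposition gives a hyperbolic basis $e,f$ with $M=\co_{F'}e+\co_{F'}f$ and $N=\co_{F'}\pi^m e+\co_{F'}\pi^{-m}f$. The lattices $\co_{F'}\pi^{j}e+\co_{F'}\pi^{-j}f$ and $\co_{F'}\pi^{j+1}e+\co_{F'}\pi^{-j}f$ for $0\le j<m$ then form a path of length $2m$ with no backtracking. Connectedness and acyclicity are the standard properties of the building. Alternatively, one can argue that a non-backtracking path of length $2m$ from $M$ produces a lattice whose position relative to $M$ has elementary divisor exactly $\pi^m$.

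\emph{Step 2: counting.} Fix $M$ with $d(\Xi,M)=2a$. The number of such $M$ is $(q+1)q^{2a-1}=q^{2a}(1+q^{-1})$ when $a>0$; this agrees with the case $m=a$, $b=0$. For $N$ with $d(\Xi,N)=2b$, let $j\in\{0,\dots,2b\}$ be the distance from $\Xi$ at which the geodesic $\Xi\to N$ leaves the geodesic $\Xi\to M$. Then $d(M,N)=2(a+b-j)$, i.e.\ $m=a+b-j$. I would count the $N$ for each $j$ as follows.
\begin{itemize}
\item For $j=0$ and $b>0$, the path leaves $\Xi$ by one of the $q$ edges not pointing toward $M$, giving $q\cdot q^{2b-1}$ choices of $N$.
\item For $0<j<2b$, there are $q-1$ edges available at the branch vertex, giving $(q-1)q^{2b-j-1}$ choices.
\item For $j=2b$, there is exactly one choice, namely the vertex on $[\Xi,M]$ at distance $2b$; this uses $a\ge b$.
\end{itemize}
Multiplying by the number of $M$ gives $q^{a+b+m}(1+q^{-1})$ for $m=a+b$ and for $m=a-b$, and $q^{a+b+m}(1-q^{-2})$ for $a-b<m<a+b$. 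No other values of $m$ occur. The degenerate case $b=0$ leaves only $j=0$, which gives $m=a$ with multiplicity $1$. Since $a\ge b$, the case $a=0$ forces $a=b=0$, which is the trivial case excluded in the statement.

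\emph{Main obstacle.} The main obstacle is Step 1, namely rigorously identifying $\sim_m$ with distance $2m$ in a genuine tree. Everything after that is elementary path counting. I would try first to deduce the tree property from the $\mathrm{SL}_2$ isomorphism. If that is awkward, I would instead argue by induction on $m$: a self-dual $N$ with $N\sim_m M$ and $m\ge1$ has a unique $\pi$-modular neighbour closer to $M$, namely $N\cap M+\pi N$ suitably adjusted. This gives uniqueness of geodesics, and hence the counts above.
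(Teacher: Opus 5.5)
Your proposal is correct and follows essentially the same route as the paper. Both identify self-dual lattices with the type-$0$ vertices of the $(q+1)$-regular tree of vertex lattices, so that relative position is half the distance, and then count pairs by the length $j$ of the common initial segment of the geodesics from $\Xi$ (the paper's $d=a+b-m$); you also justify the tree identification (valence via isotropic lines, an explicit geodesic of length $2m$) in more detail than the paper, which simply asserts it, though note that Lemma~\ref{lem:finite hermitian reps} only covers nonsingular $T$, so the $q+1$ isotropic lines should come from your direct count, and that the case $a=b=0<m$ is not actually excluded by the statement but trivially gives $0$.
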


\begin{proof}
This is a combinatorial exercise in counting paths in the Bruhat-Tits tree associated to the group $\mathrm{SU}(V)$.

As in  \cite[\S 4.1]{LRZ}, an $\co_{F'}$-lattice $\Lambda \subset V$ is called a \emph{vertex lattice} if $\Lambda \subset \Lambda^\vee \subset \pi^{-1}\Lambda$, where $\Lambda^\vee$ is the dual lattice relative to the Hermitian form.
The \emph{type} of a vertex lattice is the $\co_{F'}$-length of $\Lambda^\vee/ \Lambda$.  
Because $V$ is a hyperbolic plane, the type must be $0$ or $2$.  
The vertex lattices of type $0$ are the self-dual lattices, and those of type $2$ are lattices satisfying $\Lambda^\vee =\pi^{-1} \Lambda$.

The building of  $\mathrm{SU}(V)$ has the structure of a tree.  Its vertices are the vertex lattices, with two distinct vertex lattices connected by an edge whenever one  is contained within the other.   
Every vertex has $q+1$ edges emanating from it, and every edge connects a type $0$ lattice to a type $2$ lattice.  
In particular, the distance between any two self-dual lattices is even, and their relative position is half the distance between  them.

By a \emph{geodesic} we mean a path in the tree without backtracking.  For any two vertices $\Lambda$ and $\Lambda'$, we denote by $[\Lambda \mapsto  \Lambda']$ the unique geodesic starting at $\Lambda$ and ending at $\Lambda'$.

First consider the case $b=0$.   The integer $r_{a,0}(m)$ counts the number of self-dual lattices $M$ that are at distance both $2a$ and  $2m$ from $\Xi=N$.  This is obviously $0$ if  $m\neq a$, and if $m=a$ it is the number of geodesics  of length $2m$ emanating from $\Xi$.  In other words,
\[
r_{a,0}(m) = 
\begin{cases}
1 & \mbox{if } m=a=0 \\
  q^{2m-1}(q+1)  & \mbox{if } m=a>0 \\
   0 & \mbox{if }m\neq a .
\end{cases}
\]
  From now on we assume $b>0$.

For each pair of self-dual lattices $M$ and $N$ contributing to $r_{a,b}(m)$ there is a unique $0\le d \le  2b $ such that the geodesics $[\Xi \mapsto M]$ and $[\Xi  \mapsto N]$ agree for length $d$.  
The condition that $M \sim_m N$ (i.e.~that the distance from $M$ to $N$ is $2m$)  is equivalent to $d=a+b  -m$.
As  $0 \le d \le 2b$, we deduce
\[
r_{a,b}(m) \neq 0 \implies  a-b \le m \le a+b.
\]

Suppose  $m=a+b$.  
By the previous paragraph, any self-dual lattices $M$ and $N$ contributing to $r_{a,b}(m)$ are the endpoints of geodesics (of lengths $2a$ and $2b$) emanating from $\Xi$ and having no edges in common.  
The number of such pairs of geodesics is
\[
r_{a,b}(m) = q^{2a+2b-1} ( q+1)   = q^{a+b+m-1}(q+1)
\]

Next suppose $m=a-b$.  For any $M$ and $N$ contributing to $r_{a,b}(m)$, the lattice $N$ is determined by $M$: if one forms the geodesic of length $2a$ from $\Xi $ to $M$, then follows it for length $2b$, one arrives at $N$.  Hence
\[
r_{a,b}(m) = q^{2a-1}(q+1)= q^{a+b+m-1}(q+1)
\]
is the number of geodesics of length $2a$ emanating from $\Xi$.

Now suppose $a-b<m< a+b$.
If we set  $d= a+b-m>0$,  so that $0< d < 2b$, we  can uniquely specify a pair $(M,N)$ contributing to $r_{a,b}(m)$ by the following process:
\begin{enumerate}
\item
Choose a geodesic $\gamma_0$ of length $d$ emanating from $\Xi$, and call the endpoint $P$.
There are $q^{d-1}(q+1)$ choices for $\gamma_0$. 

\item
Choose a geodesic $\gamma_1$ of length $2a-d$ emanating from $P$, and having no edge in common with $\gamma_0$.   Call the endpoint $M$.  
There are $q^{2a-d}$ choices for $\gamma_1$. 
\item

Choose a geodesic $\gamma_2$ of length $2b-d$ emanating from $P$, and having no edge in common with $\gamma_0$ or $\gamma_1$.  Call the endpoint $N$.  There are 
$q^{2b-d-1}(q-1)$ choices for $\gamma_2$.
\end{enumerate} 
The total number of ways to choose the paths $\gamma_0$, $\gamma_1$, and $\gamma_2$ is therefore
\[
r_{a,b}(m) =   
q^{  a + b +  m - 2}  (q^2-1) ,
\]
completing the proof.
\end{proof}

\begin{proposition}\label{prop:whittaker 2x2 final}
 For any integers $a\ge b\ge 0$ we have
\[
 W'_T(0, f_a^\circ \bm{1}_\Xi  \otimes   f_b^\circ\bm{1}_\Xi )  
=    c(2)   \log( q^2 )  \begin{cases}
\Delta  & \mbox{if } a=b=0 \\
 1  & \mbox{if } a>b=0 \\
q^{2b-1}    (   q  + 1  )    \Delta   +   q^{2b-1}          & \mbox{if } a=b >0 \\
q^{2b-1}     (q+1 )   & \mbox{if } a>b>0,
\end{cases}
\]
where $c(2)  =  (   1   +    q^{-1}  )  ( 1 - q^{-2} )  $.
\end{proposition}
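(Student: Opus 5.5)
We expand the Schwartz function as a sum over pairs of lattices and then differentiate the density polynomials of Corollary \ref{cor:poly 2x2}. By \eqref{hecke xi},
\[
f_a^\circ \bm{1}_\Xi \otimes f_b^\circ \bm{1}_\Xi = \sum_{M\sim_a \Xi}\sum_{N\sim_b\Xi} \bm{1}_{M\times N}.
\]
Proposition \ref{prop:whittaker} applies with $\Vol(M\times N)=1$, since self-dual lattices have volume $1$ for the self-dual measure. For $V$ a hyperbolic plane we have $\gamma(V)^2=1$, and $\Vol(\Herm_2(\co_{F'}))=1$. Hence
\[
W_T(s, f_a^\circ \bm{1}_\Xi \otimes f_b^\circ \bm{1}_\Xi)=\sum_{m\ge 0} r_{a,b}(m)\,\Den_T(s,M_m\times N_m),
\]
where $(M_m,N_m)$ is any pair in relative position $m$. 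Here we use that Corollary \ref{cor:poly 2x2} shows $\Den_T(s,M\times N)$ depends only on the relative position $m$. Its hypotheses hold: $t_1,t_2\in\co_F^\times$ and $T\in\Herm_2(\co_{F'})$. The multiplicities $r_{a,b}(m)$ are given by Lemma \ref{lem:tree count}.

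Next we differentiate at $s=0$. Write $X=q^{-2s}$, so that $\frac{d}{ds}=-\log(q^2)\,X\frac{d}{dX}$, and set $D=\ord_F(\det T)=2\Delta-1$, which is odd. For $m=0$ the density is $(1-q^{-2}X)(1+q^{-1}X)\sum_{\ell=0}^{D}(-X)^\ell$. The alternating sum vanishes at $X=1$ because $D$ is odd, and its $X$-derivative at $X=1$ is $\sum_\ell (-1)^\ell \ell=-\Delta$. So the derivative in $s$ is $\log(q^2)\,c(2)\,\Delta$, matching the case $a=b=0$. For $m>0$ both summands carry a factor $(1-X)$, so each derivative is $\log(q^2)$ times the remaining factor at $X=1$. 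The first summand gives $q^{-2m}(1+q^{-1})\cdot 0=0$, since the alternating sum vanishes. The second gives $q^{-2m}(1-q^{-2})(m+1-m)=q^{-2m}(1-q^{-2})$. Thus
\[
W_T'(0,\bm{1}_{M\times N})=\log(q^2)\begin{cases} c(2)\Delta & m=0,\\ q^{-2m}(1-q^{-2}) & m>0.\end{cases}
\]

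It remains to sum against $r_{a,b}(m)$ and compare with the four cases. If $a>b=0$, only $m=a$ contributes, giving $q^{2a}(1+q^{-1})\cdot q^{-2a}(1-q^{-2})=c(2)$. If $a>b>0$, the range is $a-b\le m\le a+b$ with $m>0$. The two endpoints contribute $q^{a+b-m}(1+q^{-1})(1-q^{-2})$, and the interior values contribute $q^{a+b-m}(1-q^{-2})^2$. Factoring out $c(2)$, we need
\[
q^{2b}+1+(1-q^{-1})\sum_{j=1}^{2b-1}q^{j}=q^{2b-1}(q+1),
\]
which is a telescoping geometric sum. If $a=b>0$, the term $m=0$ occurs with $r_{a,a}(0)=q^{2a}(1+q^{-1})$ and contributes $q^{2a-1}(q+1)c(2)\Delta$. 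The terms $0<m\le 2a$ contribute as before, except that the endpoint $m=0$ is now replaced by the $\Delta$ term. This gives $c(2)\bigl[1+(1-q^{-1})\sum_{j=1}^{2a-1}q^j\bigr]=c(2)\,q^{2a-1}$, as required. The case $a=b=0$ was already handled above.

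The main obstacle is the bookkeeping in the first step. One must check the normalizations, namely that the Weil constant and the volumes equal $1$ so that no stray constant appears. One must also confirm that the density polynomial of Corollary \ref{cor:poly 2x2} is insensitive to the choice of pair within a relative position; it is, since the formula depends only on $m$ and $T$. After that, the geometric-sum identities are routine.
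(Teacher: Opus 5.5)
Your proposal is correct and follows essentially the same route as the paper: expand via \eqref{hecke xi}, use Corollary \ref{cor:poly 2x2} to see that $W_T(s,\bm{1}_M\otimes\bm{1}_N)$ depends only on the relative position $m$, differentiate at $s=0$, and sum against the multiplicities $r_{a,b}(m)$ of Lemma \ref{lem:tree count}. Your value $\log(q^2)\,q^{-2m}(1-q^{-2})$ for $m>0$ agrees with the paper's $c(2)\log(q^2)\,q^{-2m}(1+q^{-1})^{-1}$, and your telescoping sums correctly carry out the ``elementary manipulation'' that the paper leaves to the reader.
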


\begin{proof}
Suppose  $M , N \subset V$ are self-dual lattices in relative position $M \sim_m N$.
Corollary \ref{cor:poly 2x2} gives an explicit formula for
\[
W_T (s ,  \bm{1}_M \otimes \bm{1}_N )  = \Den_T(s, M \times N),
\]
which shows that it only depends on the relative position $m$, not on the particular lattices $M$ and $N$.
Let us therefore abbreviate 
\[
w_T (s,m) =    \Den_T(s, M \times N).
\]
After taking the derivative at $s=0$, and recalling that 
\[
\ord_F \det(T) = 2\Delta - 1 , 
\]
  the formula of Corollary \ref{cor:poly 2x2} simplifies to 
\[
w'_T (0,m )  =  c(2)   \log( q^2 )     
\begin{cases}
  \Delta & \mbox{if } m=0  \\ 
 q^{-2m}    (   1  +  q^{-1}  )^{-1}   & \mbox{if } m>0  .
 \end{cases}
\]

Now use  \eqref{hecke xi} to write
\[
W_T(s, f_a^\circ \bm{1}_\Xi  \otimes  f_b^\circ \bm{1}_\Xi)  
= 
\sum_{ \substack{  M \sim_a \Xi    \\  N \sim_b \Xi   }   } 
W_T(s,   \bm{1}_M \otimes  \bm{1}_N) ,
\]
in which the sum is over all ordered pairs of self-dual lattices $M$ and $N$ in $V$ of the indicated relative positions with $\Xi$.  Collecting  together all  terms for which  $M$ and $N$ have the same relative position shows that 
\[
W'_T(0, f_a^\circ \bm{1}_\Xi \otimes  f_b^\circ \bm{1}_\Xi )  
 = 
\sum_{m \ge 0}
r_{a,b}(m) \cdot  w'_T(0,  m ) ,
\]
where $r_{a,b}(m)$ is the integer of Lemma \ref{lem:tree count}.  
After a bit of elementary manipulation, the claim follows  from our explicit formulas for the quantities on the right-hand side. 
\end{proof}

\begin{proof}[Proof of Theorem \ref{thm:SASW}]
As both sides of the desired equality
\[
\chi  \big( f_1 \mathcal{Z}^\mathbb{L} (x_1)  \otimes^{\mathbb{L}}  f_2 \mathcal{Z}^\mathbb{L}(x_2)   \big)  \cdot  \log( q^2 )   
= 
  \frac{  W'_T(0, f_1\bm{1}_\Xi  \otimes   f_2\bm{1}_\Xi )   }{  (   1   +    q^{-1}  )  ( 1 - q^{-2} )   } 
  \]
  are $\Q$-linear in each of the Hecke functions $f_1 , f_2 \in \mathcal{H}(V)$, we may assume that 
$f_1= f_a^\circ$ and $f_2=f_b^\circ$ for some $a,b\in \Z_{\ge 0}$.  By swapping the factors in the tensor products on both sides if necessary, we may also assume that  $a\ge b$.   The desired equality follows by comparing Proposition \ref{prop:intersection 2x2 final} with Proposition \ref{prop:whittaker 2x2 final}. 
\end{proof}

\bibliographystyle{amsalpha}
%\bibliography{SWbiblio}

\providecommand{\bysame}{\leavevmode\hbox to3em{\hrulefill}\thinspace}
\providecommand{\MR}{\relax\ifhmode\unskip\space\fi MR }
% \MRhref is called by the amsart/book/proc definition of \MR.
\providecommand{\MRhref}[2]{%
  \href{http://www.ams.org/mathscinet-getitem?mr=#1}{#2}
}
\providecommand{\href}[2]{#2}

\end{document}